\numberwithin{equation}{section} 
\numberwithin{figure}{section}
\newcommand{\newrefformat}[2]{}
\crefname{lemma}{Lemma}{Lemmas}
\crefname{theorem}{Theorem}{Theorems}
\crefname{definition}{Definition}{Definitions}
\crefname{proposition}{Proposition}{Propositions}
\crefname{remark}{Remark}{Remarks}
\crefname{corollary}{Corollary}{Corollaries}
\crefname{equation}{Equation}{Equations}
\crefname{construction}{Construction}{Constructions}
\crefname{ex}{Example}{Examples}
\crefname{appsec}{Appendix}{Appendices}
\crefname{subsection}{Subsection}{Subsections}
\theoremstyle{plain}
\newtheorem{theorem}[equation]{Theorem}
\newtheorem{corollary}[equation]{Corollary}
\newtheorem{proposition}[equation]{Proposition}
\newtheorem{conjecture}[equation]{Conjecture}
\newtheorem{introtheorem}{Theorem}
\crefname{introtheorem}{Theorem}{Theorems}
\theoremstyle{definition}
\newtheorem{definition}[equation]{Definition}
\newtheorem{example}[equation]{Example}
\newtheorem{remark}[equation]{Remark}
\newcommand{\RR}{\mathbb{R}}
\newcommand{\NN}{\mathbb{N}}
\newcommand{\ZZ}{\mathbb{Z}}
\newcommand{\PP}{\mathbb{P}}
\newcommand{\into}{\hookrightarrow}
\renewcommand{\phi}{\varphi}
\newcommand{\bndry}{\partial}
\newcommand{\hocolim}{\operatorname{hocolim}}
\renewcommand{\hom}{\operatorname{Hom}}
\newcommand{\id}{\operatorname{id}}
\newcommand{\ob}{\operatorname{Ob}}
\newcommand{\op}{\operatorname{op}}
\newcommand{\FMS}{\operatorname{FMS}}
\newcommand{\dom}{\operatorname{dom}}
\newcommand{\ED}{\operatorname{ED}}
\newcommand{\VD}{\operatorname{VD}}
\newcommand{\abs}[1]{\lvert#1\rvert}
\newcommand{\bcat}[1]{{\bf #1}}
\newcommand{\cat}[1]{\mathscr{#1}}
\newcommand{\grI}{~\begin{tikzpicture}[scale=0.35]
    \fill (0,0) circle (3pt);
    \fill (1,1) circle (3pt);
    
    \draw[thick] (1,1) -- (0,0);
\end{tikzpicture}~}
\newcommand{\grII}{~\begin{tikzpicture}[scale=0.35]
    \fill (1,0) circle (3pt);
    \fill (0,0) circle (3pt);
    \fill (1,1) circle (3pt);
    \fill (0,1) circle (3pt);
    
    \draw[thick] (1,0) -- (0,0);
    \draw[thick] (0,1) -- (1,1);
\end{tikzpicture}~}
\newcommand{\grL}{~\begin{tikzpicture}[scale=0.35]
    \fill (4,0) circle (3pt);
    \fill (3,0) circle (3pt);
    \fill (4,1) circle (3pt);
    \fill (3,1) circle (3pt);
    
    \draw (4,0) -- (3,0);
    \draw (3,0) -- (3,1);
\end{tikzpicture}~}
\newcommand{\grTri}{~\begin{tikzpicture}[scale=0.35]
    \fill (0.75,0) circle (3pt);
    \fill (-0.75,0) circle (3pt);
    \fill (0,1) circle (3pt);
    \fill (0.75,1) circle (3pt);
    \draw (0,1) -- (0.75,0) -- (-0.75,0) -- cycle;
\end{tikzpicture}~}
\newcommand{\grClaw}{~\begin{tikzpicture}[scale=0.35]
    \fill (0.75,0) circle (3pt);
    \fill (-0.75,0) circle (3pt);
    \fill (0,1) circle (3pt);
    \fill (0,0.5) circle (3pt);
    
    \draw (0,1) -- (0,0.5);
    \draw (-0.75, 0) -- (0,0.5) -- (0.75,0);
\end{tikzpicture}~}
\author[M. E. Calle]{Maxine E. Calle}             
\email{callem@sas.upenn.edu}
\address{Department of Mathematics,
         University of Pennsylvania,
         Philadelphia, PA, 19104,
         USA}
\author[J. J. Gould]{Julian J. Gould}
\email{jjgould@sas.upenn.edu}
\address{Department of Mathematics,
         University of Pennsylvania,
         Philadelphia, PA, 19104,
         USA}
\date{\today}
\keywords{Cut-and-paste $K$-theory, categories with covering families, reconstruction problems, edge reconstruction conjecture}
\subjclass[2020]{
19M05 %Miscellaneous applications of K-theory
19A99 %Grothendieck groups and K0, none of the above but in this section
05C60 %Isomorphism problems in graph theory (reconstruction conjecture, etc.) and homomorphisms (subgraph embedding, etc.)
}
\title[A $K$-Theory Perspective on Edge Reconstruction]{A Combinatorial $K$-Theory Perspective on\\the edge reconstruction conjecture\\in Graph Theory}
\begin{document}
\maketitle
\begin{abstract}
    We provide a framework for abstract reconstruction problems using the $K$-theory of categories with covering families, which we then apply to reformulate the edge reconstruction conjecture in graph theory. Along the way, we state some fundamental theorems for the $K$-theory of categories with covering families.
\end{abstract}

%\tableofcontents
%-------------------------%-------------------------%-------------------------%-------------------------%-------------------------%-------------------------%-------------------------%-------------------------
\section{Introduction}\label{sec:intro}

Higher algebraic $K$-theory is a useful tool in many areas of mathematics, including algebraic geometry, number theory, and topology. Inspired by the constructions of Quillen \cite{quillen:73} and Waldhausen \cite{waldhausen:85}, a relatively recent research program pioneered by Campbell and Zakharevich seeks to expand the reach of algebraic $K$-theory to contexts with more of a combinatorial flavor \cite{CamZak, zakharevich:12, zakharevich:16a}. This combinatorial $K$-theory program, while still developing, has sparked a number of ongoing research projects, many of which are related to versions of Hilbert's third problem about scissors congruence of polytopes \cite{zakharevich:12, CamZak, malkiewich} as well as the $K$-theory of varieties \cite{Cam17, CZ18}. 

In \cite{bohmann/gerhardt/malkiewich/merling/zakharevich:23}, Bohmann--Gerhardt--Malkiewich--Merling--Zakharevich introduce a $K$-theory construction for categories with covering families. A category with covering families comes equipped with \textit{multimorphisms} $\{X_i\to X\}_{i\in I}$, comprised of a collection of morphisms $X_i\to X$ indexed over a finite set $I$, which is a ``cover'' of the object $X$. Their $K$-theory machine takes in a category with covering families and outputs a spectrum which decomposes objects in terms of their covers. This construction is a generalization of Zakharevich's assembler $K$-theory \cite{zakharevich:16a}, and in \cref{sec:KT} we adapt some of Zakharevich's theorems for assembler $K$-theory to the setting of categories with covering families (see \cref{thm: equiv of cats on KT}, \cref{thm:devissage}, and \cref{thm:localization}).

This flavor of $K$-theory can be used as a framework for reconstruction problems. A reconstruction problem is a statement of the form \textit{Given some partial information $\{X_i\}_{i\in I}$ about some object $X$, can we determine $X$ up to isomorphism?} When the partial information $\{X_i\}_{i\in I}$ is encoded as a multimorphism $\{X_i\to X\}_{i\in I}$ in some category with covering families, we can apply the machinery of combinatorial $K$-theory. We explore these ideas in more detail in \cref{sec:abstr recon} and formulate an abstract categorical framework for reconstruction problems. Our framework is applicable to problems in a wide variety of fields, including algebra, topology, signal processing, and graph theory.

In this paper, we focus on a particular open reconstruction problem in graph theory, called the edge reconstruction conjecture \cite{harary:1964, kelly42} (see also \cite{bondy/hemminger}). This well-known conjecture states that a finite graph $G$ with at least four edges can be constructed up to isomorphism from a collection of partial information known as an \textit{edge deck}, $\ED(G)$. For example, consider the graph\[
\begin{tikzpicture}
    \fill (-2,0) circle (3pt);
    \fill (0,0) circle (3pt);
    \fill (2,0) circle (3pt);
    \fill (1,1) circle (3pt);
    \draw[thick] (-2,0) -- (0,0) -- (2,0) -- (1,1) -- (0,0);
\end{tikzpicture}
\] and the four subgraphs which can be obtained by deleting a single edge\[
\begin{tikzpicture}
    \fill (-1,0) circle (3pt);
    \fill (0,0) circle (3pt);
    \fill (1,0) circle (3pt);
    \fill (0.5,0.5) circle (3pt);
    \draw[thick] (0,0) -- (1,0) -- (0.5,0.5) -- (0,0);
    
    \fill (-1,2) circle (3pt);
    \fill (0,2) circle (3pt);
    \fill (1,2) circle (3pt);
    \fill (0.5,2.5) circle (3pt);
    \draw[thick] (-1,2) -- (0,2);
    \draw[thick] (1,2) -- (0.5,2.5) -- (0,2);
    
    \fill (3,0) circle (3pt);
    \fill (4,0) circle (3pt);
    \fill (5,0) circle (3pt);
    \fill (4.5,0.5) circle (3pt);
    \draw[thick] (3,0) -- (4,0) -- (5,0);
    \draw[thick] (4.5,0.5) -- (4,0);
    
    \fill (3,2) circle (3pt);
    \fill (4,2) circle (3pt);
    \fill (5,2) circle (3pt);
    \fill (4.5,2.5) circle (3pt);
    \draw[thick] (3,2) -- (4,2) -- (5,2) -- (4.5,2.5);
\end{tikzpicture}
\] Observing that the two subgraphs on the top line are isomorphic, we can record this information in a multiset called the edge deck,\[
\ED\left( \begin{tikzpicture}[scale=0.5]\fill (-1,0) circle (3pt);
    \fill (0,0) circle (3pt);
    \fill (1,0) circle (3pt);
    \fill (0.5,0.5) circle (3pt);
    \draw[thick] (0,0) -- (1,0) -- (0.5,0.5) -- (0,0) -- (-1,0); \end{tikzpicture}\right)
    = 
\left\{\begin{tikzpicture}[scale=0.75]
    \fill (-1,0) circle (3pt);
    \fill (0,0) circle (3pt);
    \fill (1,0) circle (3pt);
    \fill (0.5,0.5) circle (3pt);
    \draw[thick] (0,0) -- (1,0) -- (0.5,0.5) -- (0,0);
    \node at (1.75,0) {$\times 1,$};
    
    \fill (3,0) circle (3pt);
    \fill (4,0) circle (3pt);
    \fill (5,0) circle (3pt);
    \fill (4.5,0.5) circle (3pt);
    \draw[thick] (3,0) -- (4,0) -- (5,0);
    \draw[thick] (4.5,0.5) -- (4,0);
    \node at (5.75,0) {$\times 1,$};
    
    \fill (7,0) circle (3pt);
    \fill (8,0) circle (3pt);
    \fill (9,0) circle (3pt);
    \fill (8.5,0.5) circle (3pt);
    \draw[thick] (7,0) -- (8,0);
    \draw[thick] (9,0) -- (8.5,0.5) -- (8,0);
    \node at (9.75,0) {$\times 2$};
\end{tikzpicture}\right\}.
\] The edge reconstruction conjecture asks whether we can uniquely construct the original graph given just this multiset. The conjecture was originally posed by Harary in 1964 \cite{harary:1964}, inspired by a version posed by Kelly in 1942 \cite{kelly42} which uses vertex-deletion instead of edge-deletion. 
It is known that the vertex reconstruction conjecture implies the edge reconstruction conjecture (see \cite{bondy91}). In particular, if a graph $G$ with $\geq 4$ edges and no isolated vertices can be reconstructed from its vertex-deleted subgraphs, it can be reconstructed from its edge-deleted subgraphs. Indeed, it immediately follows from the Whitney Isomorphism Theorem \cite{whit32} that the edge reconstruction conjecture is true if and only if all line graphs are vertex reconstructable. 

To date, many classes of graphs have been proved to be vertex- or edge-reconstructible. Kelly proved vertex reconstruction for trees in his Ph.D. thesis \cite{kelly42}. 
Wall's masters thesis \cite{wall08} has self-contained proofs of vertex-reconstruction for regular graphs, complete graphs, disconnected graphs, and graphs with disconnected complements. While a proof of vertex reconstruction for planar graphs remains elusive, Giles proved the result for outplanar graphs \cite{giles74}. Since vertex recontruction implies edge reconstruction, these classes of graphs are also edge-reconstructable.
Graphs without an induced subgraph isomorphic to ``the claw" $K_{1,3}$ were shown to be edge-reconstructable by Ellingham, Pyber, and Yu \cite{EPY88}. Kroes provides citations for other classes of edge-reconstructable graphs in terms of minimal/maximal/average vertex degree in his masters thesis \cite{kroes16}. On the other hand, the reconstruction conjecture is known to fail for some generalizations of graphs, such as directed graphs \cite{stockmeyer}, hypergraphs \cite{berge}, and matroids \cite{brylawski, brylawski2}.

In this paper, we reformulate the edge reconstruction conjecture algebraically, using the framework of combinatorial $K$-theory. It is worth noting that we do not prove (or disprove) any part of this conjecture.

\begin{introtheorem}[\cref{defn:gamma for ED}, \cref{cor:VRC iff inj}]
For each $n\geq 1$, there is a category with covering families $\Gamma_{n,n-1}$ so that the edge reconstruction conjecture is true for graphs with $n$ edges if and only if a certain map\[
\iota_n\colon \mathcal G_n\to K_0(\Gamma_{n,n-1})
\] is injective, where $\mathcal{G}_n$ is the set of isomorphism classes of graphs with $n$ edges.
\end{introtheorem}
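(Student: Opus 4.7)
The plan is to construct $\Gamma_{n,n-1}$ so that its group $K_0$ records exactly the multiset data encoded by the edge deck $\ED$, and then to identify injectivity of $\iota_n$ with the statement of the edge reconstruction conjecture for $n$-edge graphs.

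First I would take $\Gamma_{n,n-1}$ to have as objects the finite graphs with exactly $n$ or $n-1$ edges, with morphisms generated by graph isomorphisms together with the single-edge-deletion embeddings $G - e \hookrightarrow G$. The distinguished covering families would be $\{G - e \to G\}_{e \in E(G)}$ for each $n$-edge object $G$, while every $(n-1)$-edge object would carry only trivial covers. Verifying the covering-family axioms of \cite{bohmann/gerhardt/malkiewich/merling/zakharevich:23} is mostly bookkeeping once the morphisms are pinned down precisely. The map $\iota_n \colon \mathcal{G}_n \to K_0(\Gamma_{n,n-1})$ is then the obvious assignment sending the iso class of $G$ to its class $[G]$ in $K_0$.

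Next I would compute $K_0(\Gamma_{n,n-1})$ from its presentation as the abelian group on iso classes of objects, modulo the relations $[G] = \sum_{e \in E(G)} [G - e]$ imposed by the covering families. Since only the $n$-edge objects admit nontrivial covers, these relations allow us to eliminate every class $[G]$ with $|E(G)| = n$ in favor of a $\ZZ$-linear combination of $(n-1)$-edge classes, while no further relations constrain the $(n-1)$-edge classes. Consequently $K_0(\Gamma_{n,n-1})$ is the free abelian group on $\mathcal{G}_{n-1}$, and $\iota_n$ sends $[G]$ to $\sum_{[H] \in \mathcal{G}_{n-1}} m(G,H)\, [H]$, where $m(G,H)$ is the number of edges $e \in E(G)$ with $G - e \cong H$—equivalently, the multiplicity of $H$ in $\ED(G)$.

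The key observation is then that $\iota_n(G) = \iota_n(G')$ if and only if $m(G,H) = m(G',H)$ for every $[H] \in \mathcal{G}_{n-1}$, which is precisely the condition $\ED(G) = \ED(G')$ as multisets. Hence $\iota_n$ is injective if and only if distinct iso classes of $n$-edge graphs have distinct edge decks, which is exactly the edge reconstruction conjecture for graphs with $n$ edges. The main obstacle I anticipate is pinning down the morphism structure on $\Gamma_{n,n-1}$ and checking the covering-family axioms in a form that makes the $K_0$ presentation above transparent; once that is done, the biconditional reduces to unwinding definitions.
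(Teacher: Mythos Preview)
Your proposal is correct and follows essentially the same route as the paper: construct $\Gamma_{n,n-1}$ with covers given by isomorphisms and edge-deck multimorphisms, use the $K_0$ presentation to see that $[G]=[G']$ in $K_0(\Gamma_{n,n-1})$ iff $\ED(G)=\ED(G')$, and conclude the biconditional. Your explicit identification of $K_0(\Gamma_{n,n-1})$ with the free abelian group on $\mathcal{G}_{n-1}$ is exactly what the paper obtains (via D\'evissage in \cref{thm:dev on Gamma}), and it makes transparent the step the paper states as ``for two graphs with $n-1$ edges to be equal in $K_0(\Gamma_{n,n-1})$, they must be isomorphic.''
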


The idea of approaching edge reconstruction algebraically is not new, see \cite{stanley:84, stanley:85, krasikov/roditty}, but only recent developments have made it possible to approach the problem using $K$-theory. Our results are part of a body of work that seeks to take advantage of these new developments and situate algebraic approaches to geometric ``cut-and-paste'' problems within a $K$-theoretic framework.

For example, Sydler's introduction of the cut-and-paste congruence group for polytopes was crucial to completely classify scissors congruence in 3-dimensions \cite{Syd65}; this approach was extended by Jessen \cite{Jes68} to 4-dimensions and remains an open problem in higher dimensions (see also \cite{Dup01}). Work of Campbell--Zakharevich \cite{zakharevich:12, CamZak} reformulates scissors congruence of polytopes in the context of $K$-theory, and consequently questions about scissors congruence of polytopes can be approached using techniques in $K$-theory and trace methods, as in \cite{bohmann/gerhardt/malkiewich/merling/zakharevich:23}. Similarly, cut-and-paste groups of manifolds (also known as \textit{schneiden und kleben} or $SK$ groups) have been given a $K$-theoretic formulation \cite{hoekzema/merling/murray/rovi/semikina:2021}.

In both cases, these cut-and-paste groups are crucial to classifying \textit{cut-and-paste invariants}. For example, the only cut-and-paste invariants for low-dimensional polytopes are volume and the Dehn invariant (and conjecturally this is also true in higher dimensions); for manifolds, the only cut-and-paste invariants are the Euler characteristic (in $2n$-dimensions) and signature (in $4n$-dimensions). There is also a notion of \textit{(edge) reconstruction invariant} for graphs, which is an invariant $f(G)$ of a graph $G$ so that $f(G)=f(G')$ whenever $\ED(G)= \ED(G')$. For example, the number of isolated vertices, the Euler characteristic, and many polynomials of a graph are edge or vertex reconstruction invariants (see e.g.\ \cite{kotek}). Our construction of $K_0(\Gamma_{n,n-1})$ provides a universal home for these invariants in the following sense.

\begin{introtheorem}[\cref{cor:univ prop reconstr invar}]
    If $f\colon \mathcal{G}_n\to A$ is a reconstruction invariant valued in some Abelian group $A$, then $f$ factors uniquely\[
    \begin{tikzcd}
        \mathcal{G}_n\ar[r, "f"] \ar[d, swap, "\iota_n"] & A\\
        K_0(\Gamma_{n,n-1})\ar[ur, dashed] &
    \end{tikzcd}.
    \]
\end{introtheorem}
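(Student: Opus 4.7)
The strategy is to construct a group homomorphism $\tilde f\colon K_0(\Gamma_{n,n-1})\to A$ satisfying $\tilde f \circ \iota_n = f$, and then argue its uniqueness. Both parts will follow from the standard presentation of $K_0$ of a category with covering families: as an abelian group, $K_0(\Gamma_{n,n-1})$ is the free abelian group on isomorphism classes of objects of $\Gamma_{n,n-1}$, modulo the relations $[X] = \sum_{i\in I}[X_i]$ coming from covering families $\{X_i\to X\}_{i\in I}$. Thus producing $\tilde f$ reduces to giving a function on isomorphism classes that is compatible with these relations.

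For existence, I would use that $\Gamma_{n,n-1}$ is designed in \cref{defn:gamma for ED} precisely so that the covering-family relations in $K_0$ encode the edge-deck data---indeed, this is what underwrites the equivalence in \cref{cor:VRC iff inj}. Consequently, two graphs with the same edge deck have the same class $\iota_n(G)$, so the map $\iota_n$ descends to the quotient of $\mathcal{G}_n$ by the equivalence $\ED(G)=\ED(G')$. Since a reconstruction invariant $f$ by definition depends only on the edge deck of $G$, the assignment $\iota_n(G)\mapsto f(G)$ is a well-defined function on $\iota_n(\mathcal{G}_n)$. I would then extend this assignment to a function on all isomorphism classes of objects of $\Gamma_{n,n-1}$ in a canonical way dictated by the covering structure, and verify directly that the resulting function respects every defining relation of $K_0$; this produces $\tilde f$.

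For uniqueness, any factorization is forced on $\iota_n(\mathcal{G}_n)$ by the condition $\tilde f \circ \iota_n = f$, and the remaining generators of $K_0(\Gamma_{n,n-1})$ are determined by the ones in $\iota_n(\mathcal{G}_n)$ through the covering-family relations. Hence any two factorizations must coincide. The main obstacle is the middle step of the existence argument: one has to unpack \cref{defn:gamma for ED} to identify which objects of $\Gamma_{n,n-1}$ lie outside $\mathcal{G}_n$, track how $\iota_n$ interacts with them via the covering relations, and confirm that the canonical extension of $f$ is consistent with every relation. Granted the setup of \cref{defn:gamma for ED} and the characterization in \cref{cor:VRC iff inj}, this should reduce to a bookkeeping check tied to the shape of the covering families encoding $\ED$.
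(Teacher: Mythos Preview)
Your uniqueness argument contains a genuine error: you claim that ``the remaining generators of $K_0(\Gamma_{n,n-1})$ are determined by the ones in $\iota_n(\mathcal{G}_n)$ through the covering-family relations,'' but the covering relations run the other way. The only non-trivial covers in $\Gamma_{n,n-1}$ are $\{C\to G\}_{C\in\ED(G)}$ for $n$-edge graphs $G$, yielding relations $[G]=\sum_{C\in\ED(G)}[C]$ that express each $n$-edge class in terms of $(n-1)$-edge classes, not vice versa. Indeed, by \cref{thm:dev on Gamma} one has $K_0(\Gamma_{n,n-1})\cong\ZZ[\mathcal{G}_{n-1}]$, freely generated by the $(n-1)$-edge classes; the image of $\iota_n$ does not generate this group (every $\iota_n(G)$ has coefficient-sum $n$ under this identification), so a group homomorphism $\tilde f$ is \emph{not} determined by its composite with $\iota_n$. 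The same misreading undermines your existence step: there is no ``canonical way dictated by the covering structure'' to extend from $n$-edge classes to $(n-1)$-edge classes, because the covering structure imposes no constraints on the $(n-1)$-edge classes beyond isomorphism.

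In the paper the corollary is not given a separate proof; it is asserted as an immediate consequence of \cref{prop:equal ED iff equal in K0}, and its real content is that $f$ factors through the \emph{image} $\iota_n(\mathcal{G}_n)\subseteq K_0(\Gamma_{n,n-1})$: since $\iota_n(G)=\iota_n(G')$ iff $\ED(G)=\ED(G')$, any reconstruction invariant descends to a well-defined (and unique) function on $\iota_n(\mathcal{G}_n)$. If one insists on a group homomorphism $\hat f$ defined on all of $K_0(\Gamma_{n,n-1})$, one must separately choose values $\hat f([C])$ on the free generators $[C]$ with $C\in\mathcal{G}_{n-1}$ subject to $\sum_{C\in\ED(G)}\hat f([C])=f(G)$; neither existence nor uniqueness of such a choice follows from $f$ being a reconstruction invariant on $\mathcal{G}_n$ alone.
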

A possible avenue for future exploration is to lift known reconstruction invariants to $K$-theory spectra, as the authors in \cite{hoekzema/merling/murray/rovi/semikina:2021} do for the Euler characteristic, although we do not pursue this idea in this paper.

Although we are unable to prove or disprove the edge reconstruction conjecture, using $K$-theory to study this question naturally led us to a disproof of a stronger version of the edge reconstruction conjecture (see \cref{subsec:general ERC}). We note that $K$-theory is certainly not necessary to formulate and prove this result.

\begin{introtheorem}[\cref{thm:inj of i iff GERC}, \cref{thm:GERC false}]
    For every $n\geq 2$, there is some number $k\geq 1$ and some collection of $n$-edge graphs $G_1,\dots, G_k$ which is not reconstructable (up to isomorphism) from the union of the multisets $\ED(G_1),\dots, \ED(G_k)$.
\end{introtheorem}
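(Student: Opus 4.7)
I would work inside the commutative ring $R := \ZZ[\mathcal{G}]$, where $\mathcal{G}$ denotes the set of isomorphism classes of finite graphs and the product is induced by disjoint union $\sqcup$. Since each finite graph factors uniquely (up to order) into its connected components, $(\mathcal{G},\sqcup)$ is the free commutative monoid on the connected graphs, so $R$ is isomorphic to a polynomial ring on this generating set and in particular is an integral domain. The edge-deck map $\ED$ extends $\ZZ$-linearly to an operator $F\colon R\to R$, and the central observation is that $F$ is a derivation, $F(GH)=F(G)\,H+G\,F(H)$, which is immediate from the fact that an edge of $G\sqcup H$ lies in exactly one of $G$ or $H$.

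The plan is then to exhibit, for each $n\geq 2$, a nonzero element of $R$ supported on $n$-edge graphs and annihilated by $F$: splitting such an element into its positive and negative parts yields two distinct multisets of $n$-edge graphs with the same combined edge deck. Two building blocks are produced by hand. For $n=2$, the graphs $2K_2$ and $P_3+K_1$ both have edge deck $\{K_2+2K_1\}$ with multiplicity $2$, so $f_2:=[2K_2]-[P_3+K_1]$ is a nonzero element of $\ker F$. For $n=3$, the graphs $K_3+2K_1$ and $K_{1,3}+K_1$ both have edge deck $\{P_3+2K_1\}$ with multiplicity $3$, giving $f_3:=[K_3+2K_1]-[K_{1,3}+K_1]\in\ker F\setminus\{0\}$.

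For general $n\geq 2$, the Chicken McNugget theorem says the numerical semigroup generated by $\{2,3\}$ contains every integer $\geq 2$, so I pick $a,b\geq 0$ with $2a+3b=n$ and $a+b\geq 1$ and set $\Phi_n:=f_2^a f_3^b$. Iterating the Leibniz rule gives $F(\Phi_n)=0$, while $\Phi_n\neq 0$ because $R$ is a domain and $f_2,f_3\neq 0$. Every monomial of $\Phi_n$ is a disjoint union of $2$- and $3$-edge graphs totalling $n$ edges. Writing $\Phi_n=P_n-N_n$ as positive minus negative part, the ring homomorphism $R\to\ZZ$ sending each generator to $1$ evaluates $\Phi_n$ to $(1-1)^{a+b}=0$, so $|P_n|=|N_n|=:k\geq 1$. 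The multiset $P_n=\{G_1,\dots,G_k\}$ is then not reconstructable from $\ED(G_1)\cup\cdots\cup\ED(G_k)$, since $N_n$ provides a different multiset of $n$-edge graphs with the same combined edge deck.

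The bulk of the actual work is verifying that $F$ is a derivation on $R$ and checking that the two base-case pairs really do have identical edge decks --- both routine combinatorial checks. Everything downstream is formal manipulation in a polynomial ring, consistent with the authors' remark that $K$-theory is not necessary for this particular statement.
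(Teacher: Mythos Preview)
Your argument is correct. The paper, however, gives a different and shorter proof of the bare existence statement: after identifying $K_0(\Gamma_{n,n-1})\cong K_0(\Gamma_{n-1})\cong\ZZ[\mathcal G_{n-1}]$ via D\'evissage, the map $K_0(\iota_n)$ becomes the $\ZZ$-linear map $\varepsilon_n\colon\ZZ[\mathcal G_n]\to\ZZ[\mathcal G_{n-1}]$ sending $[G]\mapsto\sum_{C\in\ED(G)}[C]$, and they conclude non-injectivity from the inequality $|\mathcal G_n|>|\mathcal G_{n-1}|$---a pigeonhole argument on ranks, with no explicit element of the kernel produced. That said, your construction is not foreign to the paper: immediately after this proof the authors observe that $\bigoplus_n\ker\varepsilon_n$ is a graded subring of $\bigoplus_n K_0(\Gamma_n)$ (your Leibniz-rule observation, phrased slightly differently) and use exactly the products $([2K_2]-[P_3])^a([K_{1,3}]-[K_3])^b$ to write down explicit kernel elements and to bound $k_n$ from above. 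So your proof coincides with the paper's \emph{secondary}, constructive treatment rather than with its primary proof. What you buy is an explicit witness and the bound $k_n\le 2^{\lfloor n/3\rfloor}$; what the paper's rank argument buys is a one-line proof once the $K$-theory identification is in place.
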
 

Note that for each $n$, there is a minimal $k_n$ for which edge reconstructability fails.
The original edge reconstruction conjecture can be restated as saying that $k_n>1$ for all $n\geq 4$. Our work also opens the door for new combinatorial avenues of exploration. Rather than trying to prove a lower bound on $k_n$, we can instead try to produce upper bounds. At the end of \cref{subsec:general ERC}, we explore some ideas in this direction.

\subsection{Outline}
In \cref{sec:KT}, we recall the $K$-theory construction for categories with covering families and state some fundamental theorems which are direct analogues of theorems for assembler $K$-theory. We give finite graphs the structure of a category with covering families in \cref{sec:graphs}, and state the edge reconstruction conjecture in the language of $K$-theory in the same section (in particular, see \cref{cor:VRC iff inj}). The generalized version is discussed in \cref{subsec:general ERC}. In \cref{sec:future}, we briefly discuss some potential avenues for future research. There is one appendix, \cref{sec:abstr recon}, where we develop an abstract framework for general reconstruction problems, generalizing our methods from edge reconstruction to other settings.

\subsection{Acknowledgements} We are very grateful to Andrew Kwon, as well as Harry Smit, without whom this unlikely collaboration would've never begun. We would also like to thank David Chan and Mona Merling for helpful feedback on early drafts of this paper, as well as the anonymous referee for their comments which improved the exposition in this paper. Additional thanks goes to Anish Chedalavada, Rob Ghrist, Cary Malkiewich, and Maximilien P\'eroux for helpful conversations. The first-named author was partially supported by NSF grant DGE-1845298.
%-------------------------%-------------------------%-------------------------%-------------------------%-------------------------%-------------------------%-------------------------%-------------------------
\section{\texorpdfstring{$K$}{TEXT}-theory of categories with covering families}\label{sec:KT}

In this section, we review the definition of categories with covering families from \cite{bohmann/gerhardt/malkiewich/merling/zakharevich:23} and highlight certain results about their $K$-theory groups. The basic idea is that a category with covering families is a category which comes with a notion of ``covers'' of objects, specified by multimorphisms in $\cat C$. Recall that $\{f_i\colon A_i\to A\}_{i\in I}$ is a multimorphism in $\cat C$ if $I$ is a finite (possibly empty) indexing set and for each $i\in I$ the map $f_i\colon A_i\to A$ is a morphism in $\cat C$. The $K$-theory (particularly $K_0$) of a category with covering families decomposes objects into pieces according to these multimorphisms.

\begin{definition}\label{defn:CatFam}
Let $\cat C$ be a small category with a distinguished object $*\in \cat C$ so that $\cat C(*,*) = \{1_*\}$ and $\cat C(c, *)=\varnothing$ for $c\neq *$. Then $\cat C$ is a \textit{category with covering families} if it comes with a collection of multimorphisms, called \textit{covering families}, such that\begin{itemize}
    \item for every finite (possibly empty) indexing set $I$, $\{*=*\}_{i\in I}$ is a covering family;
    \item for all $c\in \cat C$, $\{c=c\}$ is a covering family;
    \item given a covering family $\{g_j\colon B_j\to A\}_{j\in J}$ along with a covering family $\{f_{ij}\colon C_{ij}\to B_i\}_{i\in I_j}$ for each $j\in J$, the composition\[
    \{g_j\circ f_{ij}\colon C_{ij}\to A\}_{j\in J, i\in I_j}
    \] is also a covering family.
\end{itemize}
A morphism of categories with covering families is a functor which preserves covers and distinguished objects. Categories with covering families and functors between them assemble into a category denoted $\bcat{CatFam}$.
\end{definition}

Categories with covering families generalize the assemblers of Zakharevich \cite{zakharevich:16a}, in that every assembler gives rise to a category with covering families with the same $K$-theory (see \cite[Example 2.8 and Remark 2.18]{bohmann/gerhardt/malkiewich/merling/zakharevich:23}). An assembler can be thought of as a category with covering families that has extra structure.
The $K$-theory construction of a category with covering families is identical to the one for assemblers, both relying on the following definition of a category of covers.

\begin{definition}\label{defn:W(C)}
Let $\cat C\in \bcat{CatFam}$ and define its \textit{category of covers} to be the category $W(\cat C)$ whose objects are any finite collection $\{A_i\}_{i\in I}$ of non-distinguished objects of $\cat C$ (where $I$ is a finite set). The natural basepoint object of $W(\cat C)$ is the empty family $\varnothing$. A morphism $f\colon \{A_i\}_{i\in I}\to \{B_j\}_{j\in J}$ in $W(\cat C)$ is a set-map $f\colon I\to J$ along with morphisms $f_i\colon A_i\to B_{f(i)}$ for all $i\in I$ so that\[
\{f_i\colon A_i\to B_j\}_{i\in f^{-1}(j)}
\] is a covering family for each $j\in J$. This construction defines a functor $W\colon \bcat{CatFam}\to \bcat{Cat}$.
\end{definition}

Using this category, the authors of \cite{bohmann/gerhardt/malkiewich/merling/zakharevich:23} construct the \textit{$K$-theory spectrum} $K(\cat C)$ of a category with covering families $\cat C$ as a symmetric spectrum. We will not go into the full details, instead pointing the reader to \cite[Definition 2.17]{bohmann/gerhardt/malkiewich/merling/zakharevich:23}, however we will highlight a few observations about the $K$-groups of a category with covering families which will be useful for us.

\begin{definition}
The $n^{th}$ $K$-group of $\cat C$ is $
K_n(\cat C) := \pi_nK(\cat C).
$
\end{definition}

These $K$-groups are very difficult to compute in general, unless $n=0$, in which case there is a concrete description given in \cite[Proposition 3.8]{bohmann/gerhardt/malkiewich/merling/zakharevich:23} (or see \cite[Theorem 2.13]{zakharevich:16a} for the analogous result for assemblers).

\begin{theorem}[$K_0$ theorem]\label{thm:K0}
For a category with covering families $\cat C$, \[
K_0(\cat C) =  \ZZ[\ob\cat C] \Big/[A]=\sum_{i\in I}[A_i]
\] for any covering family $\{A_i\to A\}_{i\in I}$.
\end{theorem}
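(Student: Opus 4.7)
The approach is to reduce computing $\pi_0 K(\cat C)$ to computing the group completion of a commutative monoid of path components. The $K$-theory spectrum in \cite{bohmann/gerhardt/malkiewich/merling/zakharevich:23} is built from the category of covers $W(\cat C)$ via a Segal/$\Gamma$-space style machine, and disjoint union of covering families equips $W(\cat C)$ with a permutative symmetric monoidal structure with unit $\varnothing$. For such a construction, $\pi_0$ of the output spectrum is the group completion of the commutative monoid $M := \pi_0|NW(\cat C)|$ under $\sqcup$. The first step of the proof would thus be to unpack the definition of $K(\cat C)$ from \cite[Definition 2.17]{bohmann/gerhardt/malkiewich/merling/zakharevich:23} and confirm this identification, following the same route as \cite[Theorem 2.13]{zakharevich:16a} for assemblers.

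Next, I would give an explicit presentation of $M$. As a commutative monoid, every object $\{A_i\}_{i\in I}$ in $W(\cat C)$ is literally the $\sqcup$-sum of its singletons, so $M$ is generated by the classes $[A]$ for $A\in\ob\cat C$; note that $[*]=0$ since the empty family $\{*\}_{i\in\varnothing}$ is a covering family, giving a morphism $\varnothing\to\{*\}$. To read off the relations, observe that a morphism $\{A_i\}_I\to\{B_j\}_J$ in $W(\cat C)$, determined by a set map $f\colon I\to J$, decomposes as the disjoint union over $j\in J$ of its ``fiber'' morphisms $\{A_i\}_{i\in f^{-1}(j)}\to\{B_j\}$, and each such fiber morphism is exactly the data of a covering family $\{A_i\to B_j\}_{i\in f^{-1}(j)}$. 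Thus the morphisms of $W(\cat C)$ contribute precisely the relations $[A]=\sum_{i\in I}[A_i]$ for each covering family $\{A_i\to A\}_{i\in I}$ in $\cat C$. Passing to group completion converts this monoid presentation into the stated abelian group presentation.

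The main obstacle will be the ``only'' direction of the second step: showing that if two objects of $W(\cat C)$ lie in the same path component of $|NW(\cat C)|$, then their classes in $M$ are already equated by iterated application of covering-family relations. A priori, two objects are identified in $M$ only by a zigzag of morphisms, and one needs to reduce such a zigzag to a sequence of covering-family substitutions. I would handle this by induction on zigzag length, using (i) the decomposition of arbitrary $W$-morphisms into fiber morphisms above, (ii) the closure of covering families under composition in \cref{defn:CatFam}, and (iii) compatibility of $\sqcup$ with the monoid relations, to rewrite each backward-facing arrow in the zigzag as an application of a covering-family relation on the same generating set. Once this normalization lemma is in place, the presentation of $M$ and hence of $K_0(\cat C)=\pi_0K(\cat C)$ as a group completion follows.
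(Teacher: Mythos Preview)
The paper does not prove this theorem; it simply records the statement and points to \cite[Proposition 3.8]{bohmann/gerhardt/malkiewich/merling/zakharevich:23} (and \cite[Theorem 2.13]{zakharevich:16a} for the assembler version). Your outline is essentially the argument those references give: identify $\pi_0 K(\cat C)$ with the group completion of the commutative monoid $\pi_0|NW(\cat C)|$ coming from the $\Gamma$-space/Segal machine, and then present that monoid explicitly using the decomposition of a $W(\cat C)$-morphism into its fiberwise covering families.

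Two small comments. First, your parenthetical about $[*]=0$ via a morphism $\varnothing\to\{*\}$ in $W(\cat C)$ is not quite right: by \cref{defn:W(C)} the objects of $W(\cat C)$ are finite families of \emph{non-distinguished} objects, so $\{*\}$ is not an object there and $[*]$ never appears as a generator in your monoid $M$. This is harmless for the conclusion, since in the target presentation $\ZZ[\ob\cat C]/\!\sim$ the relation $[*]=\sum_{i\in\varnothing}[A_i]=0$ already kills $[*]$, so the two presentations agree.

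Second, the ``main obstacle'' you flag is more routine than you suggest. The set $\pi_0|NW(\cat C)|$ is exactly $\ob W(\cat C)$ modulo the equivalence relation generated by single morphisms, and because $\sqcup$ is functorial this equivalence is automatically a monoid congruence. Since every morphism in $W(\cat C)$ is literally a $\sqcup$-sum of its fiber morphisms $\{A_i\}_{i\in f^{-1}(j)}\to\{B_j\}$, each of which is a covering family, the congruence generated by all morphisms coincides with the congruence generated by the covering-family relations $[A]=\sum_{i\in I}[A_i]$. No separate induction on zigzag length is needed: the zigzags are absorbed by the equivalence-closure step in forming the congruence. With that simplification, your argument is complete and matches the cited proofs.
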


\begin{remark}\label{rmk:UP of K0}
    The group $K_0(\cat C)$ is the universal home for \textit{covering invariants}, which are assignments $F\colon \ob\cat C\to \cat A$ valued in some Abelian group $\cat A$ satisfying $F(A) = \sum_{i\in I} F(A_i)$ for every cover $\{A_i\to A\}_{i\in I}$. It follows from the theorem above that any covering invariant factors uniquely as\[
\begin{tikzcd}
    \ob \cat C \ar[r, "F"] \ar[d] & \cat A\\
    K_0(\cat C) \ar[ur, dashed, swap, "{F'}"] &
\end{tikzcd}
\] where $\ob\cat C\to K_0(\cat C)$ sends an object to its equivalence class.
\end{remark}

In higher algebraic $K$-theory, there are many fundamental theorems that make $K$-theory easier to compute (see, for example, \cite[\S V]{weibel:13}). 
With small adjustments, Zakharevich's proofs of these theorems for the $K$-theory of assemblers \cite[Theorems A,B,C,D]{zakharevich:16a} can be adapted to the setting of categories with covering families. We provide references to the relevant results of Zakharevich for completeness.

\begin{theorem}\label{thm: equiv of cats on KT}
    Suppose $F\colon \cat C\to \cat D$ is a morphism between categories with covering families which is an underlying equivalence of categories. Then \[KF\colon K(\cat C)\to K(\cat D)\] is a weak equivalence of spectra.
\end{theorem}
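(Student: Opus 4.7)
The plan is to mirror the two-step strategy used by Zakharevich in the proof of \cite[Theorem A]{zakharevich:16a} for assemblers. The spectrum $K(\cat C)$ is constructed in \cite[Definition 2.17]{bohmann/gerhardt/malkiewich/merling/zakharevich:23} by applying a Segal-style machine to the category of covers $W(\cat C)$; this machine factors through the classifying space $BW(\cat C)$, and it is a standard fact that equivalences of categories induce homotopy equivalences on classifying spaces (since natural isomorphisms give simplicial homotopies), hence weak equivalences of the resulting spectra. So the entire problem reduces to showing that $W(F)\colon W(\cat C)\to W(\cat D)$ is an equivalence of underlying categories.

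First I would verify that $W(F)$ is well-defined as a functor. On objects it sends $\{A_i\}_{i\in I}$ to $\{F(A_i)\}_{i\in I}$, which lies in $W(\cat D)$ because $F$ preserves the distinguished object and so carries non-distinguished objects to non-distinguished objects. On a morphism $(g, \{h_i\})$ it sends the data $(g, \{F(h_i)\})$, and this lands in $W(\cat D)$ because $F$ preserves covering families. Next, for essential surjectivity of $W(F)$, given $\{B_j\}_{j\in J}$ I would use essential surjectivity of $F$ to pick objects $A_j\in\cat C$ and isomorphisms $\phi_j\colon F(A_j)\xrightarrow{\sim} B_j$; these assemble into an isomorphism $W(F)(\{A_j\})\xrightarrow{\sim}\{B_j\}$ in $W(\cat D)$ once we observe that each singleton family of an isomorphism is a covering family (a consequence of the axioms for $\{X=X\}$ together with composition).

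For full faithfulness, a morphism in $W(\cat C)$ between $\{A_i\}_{i\in I}$ and $\{A'_j\}_{j\in J}$ consists of a set map $g\colon I\to J$ plus component morphisms $h_i\colon A_i\to A'_{g(i)}$ whose fibers over each $j$ form a covering family. Fullness and faithfulness of $F$ on hom-sets put the component morphisms of $W(\cat C)$ and $W(\cat D)$ in bijection via $h_i\leftrightarrow F(h_i)$; what remains is to check that the covering family condition matches up under this bijection. The forward direction is by hypothesis (since $F$ preserves covers), so the main obstacle is the converse: if $\{F(h_i)\}_{i\in g^{-1}(j)}$ is a cover in $\cat D$, then $\{h_i\}_{i\in g^{-1}(j)}$ is a cover in $\cat C$. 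I expect to handle this by transporting a quasi-inverse $G$ along the isomorphisms furnished by the underlying equivalence so that $G$ becomes a morphism of categories with covering families and $GF$ is isomorphic to the identity through cover-preserving natural transformations; then applying $G$ to $\{F(h_i)\}$ gives a cover $\{GF(h_i)\}$ in $\cat C$, and naturality of the isomorphism $1_{\cat C}\cong GF$ together with the composition axiom recovers $\{h_i\}$ as a cover. This cover-reflection step is the delicate piece of the argument; once it is in hand, the rest assembles cleanly.
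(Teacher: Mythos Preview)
Your reduction to showing that $W(F)$ is an equivalence of categories is a natural strategy, and it differs from the paper's route, which simply cites Zakharevich's argument (Quillen's Theorem~A applied to $W(F)$, via \cite[Lemma 4.1 and Proposition 2.11(3)]{zakharevich:16a}) and asserts it carries over verbatim. But your argument has two genuine gaps, and the second one is not repairable along the lines you sketch.

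First, your claim that a singleton isomorphism is always a covering family ``as a consequence of the axioms for $\{X=X\}$ together with composition'' is incorrect. The axioms only guarantee $\{c=c\}$; the composition axiom lets you pre- or post-compose a cover with \emph{other covers}, so to deduce that $\{\phi\colon X\xrightarrow{\cong}Y\}$ is a cover you would already need to know it is one. This undermines both your essential surjectivity step and the later transport along $\eta\colon 1_{\cat C}\cong GF$.

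Second, the cover-reflection step cannot be completed as you propose: a quasi-inverse to $F$ need not preserve covers, and there is no way to transport it to one that does. Take $\cat C$ and $\cat D$ with identical underlying category (objects $*,A,B$ and a single non-identity map $f\colon A\to B$); give $\cat C$ only the mandatory covers but give $\cat D$ the additional cover $\{f\colon A\to B\}$. Then $F=\id\colon\cat C\to\cat D$ is a cover-preserving equivalence of underlying categories, yet its unique quasi-inverse $\id$ fails to send the cover $\{f\}$ back to a cover in $\cat C$. In this example $K_0(\cat C)\cong\ZZ^2$ while $K_0(\cat D)\cong\ZZ$, so $KF$ is not a weak equivalence at all. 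The same example also obstructs the Quillen Theorem~A approach the paper invokes: the comma category $W(F)\downarrow\{B\}$ has two components and is not contractible. So the step you correctly flagged as ``delicate'' is a real obstruction; the statement as written seems to require an additional hypothesis (for instance that $F$ admit a cover-preserving quasi-inverse, or that isomorphisms are covers in both categories), and your proposed workaround does not supply one.
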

\begin{proof}
    This is the analog of \cite[Corollary 4.2]{zakharevich:16a} and \cite[Lemma 4.1]{zakharevich:16a}, which relies on \cite[Proposition 2.11(3)]{zakharevich:16a}. The proofs for all of these results go through verbatim for categories with covering families, since none of them rely on the extra assembler structure.
\end{proof}

The D\'evissage Theorem provides sufficient conditions for the inclusion of a full subcategory with covering families $\cat D\hookrightarrow\cat C$ to induce an equivalence on $K$-theory. By full subcategory with covering families, we mean that $\cat D$ is a full subcategory of $\cat C$ and that $\cat D\to \cat C$ is a morphism of categories with covering families. In particular, every covering family in $\cat D$ is sent to a covering family in $\cat C$; this implies that $W(\cat D)$ is a full subcategory of $W(\cat C)$. The proof of the D\'evissage Theorem \cite[Theorem B]{zakharevich:16a} for assemblers also readily adapts to categories with covering families, but we need to impose the extra condition that covers have common refinements. Recall that a cover $\{f_i : A_i \rightarrow A \}_{i \in I}$ is a \textit{refinement} of a cover $\{g_j : B_j \rightarrow A \}$ if for every $i \in I$ there is a $j \in J$ and a map $h:A_i \rightarrow B_j$ such that $f_i = g_j \circ h$. 

\begin{definition}
    A category with covering families \textit{has refinements} if any pair of covers $\{A_i\to A\}_{i\in I}$ and $\{A'_j\to A\}_{j\in J}$ of the same object $A$ has a common refinement.
\end{definition}

\begin{theorem}[D\'evissage]\label{thm:devissage}
Suppose $\cat C$ has refinements and $i\colon \cat D\to \cat C$ is an inclusion of a full subcategory with covering families. If every object $C\in \cat C$ has a cover $\{D_i\to C\}_{i\in I}$ with $D_i\in \cat D$ for all $i\in I$, then \[
Ki\colon K(\cat D)\xrightarrow{\sim} K(\cat C)
\] is an equivalence.
\end{theorem}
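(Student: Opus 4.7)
The plan is to transport Zakharevich's argument for \cite[Theorem B]{zakharevich:16a} into the setting of categories with covering families, tracking carefully where the disjointness/uniqueness axioms in the assembler setting get replaced by the explicit refinements hypothesis stated here. Since the $K$-theory spectrum of \cite{bohmann/gerhardt/malkiewich/merling/zakharevich:23} is built functorially from the category of covers $W(\cat C)$ via standard machinery, it suffices to show that the full-subcategory inclusion $W(i)\colon W(\cat D)\hookrightarrow W(\cat C)$ induces a weak equivalence after the $K$-theory construction. The reductions from $K$ to $W$ used by Zakharevich do not use the assembler-specific axioms (her Proposition~2.11(3) is exactly the input quoted in the proof of \cref{thm: equiv of cats on KT}), so they go through verbatim here.

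The geometric content is then to compare $W(\cat D)$ and $W(\cat C)$ via a Quillen Theorem A style cofinality argument. Given an object $\{C_\alpha\}_{\alpha\in A}\in W(\cat C)$, the hypothesis supplies, for each $\alpha$, a covering family $\{D_{\alpha\beta}\to C_\alpha\}_{\beta\in B_\alpha}$ with every $D_{\alpha\beta}\in \cat D$. Assembling these yields a morphism in $W(\cat C)$ from an object of $W(\cat D)$ down to $\{C_\alpha\}$. The first step is to organize the collection of all such $\cat D$-refinements into the slice category $W(\cat D)\!\downarrow\!\{C_\alpha\}$ (or its opposite, depending on which variant of Theorem A one invokes, matching Zakharevich's conventions) and to show that this slice is contractible (in fact filtered).

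The refinements hypothesis enters in two places, and this is where the main technical care is required. First, to combine two $\cat D$-refinements $\{D_{\alpha\beta}\to C_\alpha\}$ and $\{D'_{\alpha\gamma}\to C_\alpha\}$ one uses the refinements axiom in $\cat C$ to produce a common refinement of each cover of $C_\alpha$, and then applies the covering hypothesis once more levelwise to further refine into $\cat D$; by composition-closure of covering families this produces an object of the slice dominating both. Second, the same device coequalizes parallel arrows in the slice, upgrading ``has an upper bound'' to ``filtered.'' Composition-closure of covering families, together with the fact that $\cat D\hookrightarrow \cat C$ is full and covering-preserving (so pullbacks of $\cat D$-covers via $\cat D$-morphisms stay in $W(\cat D)$), is what makes this bookkeeping work.

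The principal obstacle is precisely this bookkeeping: in the assembler setting, refinements interact cleanly with the disjointness axiom so that one can choose canonical subdivisions, whereas here refinements are only postulated to exist. I therefore expect the bulk of the argument to consist in verifying the filteredness (equivalently, contractibility of nerves) of the slice $W(\cat D)\!\downarrow\!\{C_\alpha\}$ using only the three bullet points of \cref{defn:CatFam} together with the refinements hypothesis. Once this contractibility is in hand, Quillen's Theorem A delivers a weak equivalence $N W(\cat D)\xrightarrow{\sim} N W(\cat C)$, which promotes to the desired equivalence $Ki\colon K(\cat D)\xrightarrow{\sim} K(\cat C)$ of spectra by the same formal argument as in \cite[Theorem B]{zakharevich:16a}.
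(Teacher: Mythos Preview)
Your proposal is correct and follows essentially the same route as the paper. The paper's proof is terser: it observes that the refinements hypothesis yields the analogue of \cite[Proposition~2.11(2)]{zakharevich:16a} (any cospan $A\to B\leftarrow C$ in $W(\cat C)$ completes to a commutative square), after which Zakharevich's proof of \cite[Theorem~B]{zakharevich:16a} goes through verbatim; your write-up simply unpacks what that verbatim argument is---the Quillen Theorem~A step and the filteredness of the slice $W(\cat D)\!\downarrow\!\{C_\alpha\}$---and correctly isolates the two places where refinements (plus one further pass of the covering hypothesis to land back in $\cat D$) are invoked.
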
\begin{proof}
    If $\cat C$ has refinements, then the analogy of \cite[Proposition 2.11(2)]{zakharevich:16a} holds, which says that any diagram $A\to B\leftarrow C$ in $W(\cat C)$ can be completed to a commutative square. The proof of \cite[Theorem B]{zakharevich:16a} then works verbatim.
\end{proof}

Another helpful theorem, the Localization Theorem, is stated for simplicial categories with covering families.

\begin{definition}
    A \textit{simplicial category with covering families} is a simplicial object in $\bcat{CatFam}$, i.e. a functor $\Delta^{\op}\to \bcat{CatFam}$. The $K$-theory of a simplicial category with covering families is the homotopy colimit
    \[
    K(\cat C_\bullet) \simeq \hocolim_{[n]\in\Delta^{\op}} K(\cat C_n).
    \] 
\end{definition}

In particular, if $\cat C_\bullet$ is the constant simplicial object on a category with covering families $\cat C$, then $K(\cat C_\bullet)\simeq K(\cat C)$. 

\begin{theorem}[Localization]\label{thm:localization}
    Let $F\colon \cat D_\bullet\to \cat C_\bullet$ be a morphism of simplicial categories with covering families. Then there is a simplicial category with covering families $\cat C/F_\bullet$ so that\[
    K(\cat D_\bullet)\xrightarrow{KF} K(\cat C_\bullet)\to K(\cat C/F_\bullet)
    \] is a cofiber sequence.
\end{theorem}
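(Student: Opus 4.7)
The plan is to mimic Zakharevich's proof of the corresponding Theorem D in \cite{zakharevich:16a} for assemblers. First, for a single (non-simplicial) morphism $F\colon \cat D\to \cat C$ in $\bcat{CatFam}$, I would construct a quotient $\cat C/F$ having the same objects as $\cat C$ and whose covering families are generated (under the closure axioms of \cref{defn:CatFam}) by (i) all covering families already present in $\cat C$, together with (ii) the empty covering family $\{\,\}\to F(D)$ for every object $D\in \cat D$. The point of (ii) is that, by \cref{thm:K0}, the relation $[F(D)] = \sum_{\varnothing}[-]=0$ will hold in $K_0(\cat C/F)$, so every object in the image of $F$ becomes nullhomologous. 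Verifying that this prescription actually defines a category with covering families is a bookkeeping check of the three axioms. Functoriality of the construction in $F$ then lets us extend levelwise by $(\cat C/F_\bullet)_n := \cat C_n/F_n$ with the induced simplicial structure.

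The K-theoretic content is to show that $K(\cat C)\to K(\cat C/F)$ has homotopy fiber $K(\cat D)$ at each simplicial level. Following Zakharevich, this reduces to a comparison of the associated categories of covers $W(\cat D)$, $W(\cat C)$, and $W(\cat C/F)$ from \cref{defn:W(C)}. The key observation is that a morphism in $W(\cat C/F)$ is, locally, either a morphism in $W(\cat C)$ or uses one of the new empty covers $\{\,\}\to F(D)$ to ``erase'' a piece lying over $F(\cat D)$. One then builds an explicit filtered/bisimplicial model for the cofiber $\mathrm{cofib}(W(F))$ and identifies it with $W(\cat C/F)$ up to weak equivalence, which lifts to the symmetric spectrum level via the construction of $K$ in \cite[Definition 2.17]{bohmann/gerhardt/malkiewich/merling/zakharevich:23}. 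As in \cref{thm: equiv of cats on KT}, none of the manipulations in Zakharevich's proof use assembler-specific structure (such as disjointness of covers or the Grothendieck-topology axioms), so they should carry through verbatim.

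Finally, to pass from the level-wise cofiber sequence to the simplicial statement, I would use that the definition $K(\cat C_\bullet)\simeq \hocolim_{[n]\in\Delta^{\op}} K(\cat C_n)$ is formulated precisely so that $K$ sends level-wise cofiber sequences of simplicial objects in $\bcat{CatFam}$ to cofiber sequences of spectra; this is where the homotopy colimit (rather than ordinary colimit) is important.

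The main obstacle I anticipate is the levelwise comparison of $W(\cat C/F)$ with the cofiber $\mathrm{cofib}(W(F))$: specifically, one must show that every chain of morphisms in $W(\cat C/F)$ can be factored into a composite of an ``old'' piece in $W(\cat C)$ followed by an ``erasure'' piece built from the new empty covers, and that this factorization is sufficiently functorial to compare with the bar-construction model of the cofiber. This factorization is precisely where Zakharevich's argument uses the assembler axioms most heavily, so care must be taken that the weaker axioms of \cref{defn:CatFam} still suffice; if needed, one can adjust the generating set in (ii) above to guarantee the relevant factorization properties.
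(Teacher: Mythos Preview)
Your construction is not the one used in the paper, and more importantly it is not the one in Zakharevich's work that you claim to be following. You cite Theorem~D of \cite{zakharevich:16a}, but the general localization theorem there is Theorem~C; Theorem~D is the special statement that, under strong extra hypotheses (full subassembler, closed under covers, has complements), the cofiber can be identified with a single assembler $\cat C\setminus \cat D$. The paper explicitly follows Theorem~C and builds $(\cat C/F)_\bullet$ as a bar-type simplicial object
\[
(\cat C/F)_n = \cat C \vee \Bigl(\bigvee_{i=1}^n \cat D\Bigr)
\]
with the usual fold/skip face and degeneracy maps (see the Remark following the theorem). This is genuinely simplicial even when $F$ is a map of constant simplicial objects; your levelwise prescription $(\cat C/F_\bullet)_n := \cat C_n/F_n$ would instead produce a constant simplicial object in that case, which cannot in general model the homotopy cofiber of $K(F)$.

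The specific gap is your ``naive quotient'' $\cat C/F$, obtained by adjoining empty covers $\{\,\}\to F(D)$. This does force $[F(D)]=0$ in $K_0$, but there is no reason it computes the cofiber on higher homotopy groups: cofibers of maps of spectra are not generally modeled by killing objects at the level of the input category. Zakharevich's argument never performs such a quotient; the comparison with the cofiber goes through the bar construction and Lemma~6.2 of \cite{zakharevich:16a}, and that is where the actual homotopical work lives. Your anticipated obstacle---factoring morphisms in $W(\cat C/F)$ into ``old'' and ``erasure'' pieces---is a symptom of this: no such factorization argument appears in the proof of Theorem~C, because the bar model makes the cofiber identification formal. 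If you want to adapt Zakharevich's proof, you should build the simplicial object above and then check that Lemma~6.2 and Proposition~2.11(3) of \cite{zakharevich:16a} go through for categories with covering families; that is what the paper does.
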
\begin{proof}
    The simplicial category with covering families $C/F_\bullet$ is analogous to the simplicial assembler described in \cite[Definition 6.1]{zakharevich:16a}. The proof of \cite[Theorem C]{zakharevich:16a} then works verbatim, with most of the work being done in \cite[Lemma 6.2]{zakharevich:16a} (which again makes use of \cite[Proposition 3.11(3)]{zakharevich:16a}).
\end{proof}

\begin{remark}
    As in \cite[Corollary 6.3]{zakharevich:16a}, the Localization Theorem applied to the collapse map $\cat C\to *$ implies that $K(\cat C)$ is a $\Omega$-spectrum.
\end{remark}

\begin{remark}
We will describe $\cat C/F_\bullet$ in the case when $F\colon \cat D\to \cat C$ is a functor between categories with covering families (considered as constant simplicial categories), pointing the reader to \cite[Definition 6.1]{zakharevich:16a} for the more general construction. 

Given two categories with covering families, their coproduct $\cat C\vee \cat D$ is another category with covering families. This category has objects $\ob\cat C\vee \ob\cat D$ where we identify the distinguished object of $\cat C$ with the distinguished object of $\cat D$. The morphisms between non-initial objects are given by $\hom\cat C\amalg \hom \cat D$.
More generally, given any set $I$ and $I$-tuple of categories with covering families $\{\cat C_i\}_{i\in I}$, we get a category with covering families $\bigvee_{i\in I} \cat C_i$.

The simplicial category with covering families $(\cat C/F)_\bullet$ is specified by\[
(\cat C/F)_n = \cat C \lor \left(\bigvee_{i=1}^n \cat D\right)
\] and the face and degeneracy maps are given as follows:\begin{align*}
    d_i\colon \cat C\vee \left(\bigvee_{i=1}^n \cat D\right)&\xrightarrow{1\vee \nabla_i}\cat C\vee \left(\bigvee_{i=1}^{n-1} \cat D\right) & 0<i \leq n,\\
    d_0\colon \cat C\vee \left(\bigvee_{i=1}^n \cat D\right) &\xrightarrow{\cong} \cat C\vee \cat D\vee \left(\bigvee_{i=1}^{n-1} \cat D\right)\xrightarrow{\nabla(F)\vee 1} \cat C\vee \left(\bigvee_{i=1}^{n-1} \cat D\right) & i=0,\\
    s_j\colon \cat C\vee \left(\bigvee_{i=1}^n \cat D\right) &\xrightarrow{1\vee s^j} \cat C\vee \left(\bigvee_{i=1}^{n+1} \cat D\right) & 0\leq j\leq n,
\end{align*} 
where $\nabla_i$ applies the fold map $\nabla\colon \cat D\vee \cat D\to \cat D$ to the $(i-1)^{th}$ and $i^{th}$ components of $\bigvee_{i=1}^n \cat D$, $\nabla(F)$ is the restriction of the fold map $\nabla\colon \cat  C\vee \cat C\to \cat C$ to $\cat C\vee F(\cat D)$, and $s^j$ includes $\bigvee_{i=1}^n \cat D$ into $\bigvee_{i=1}^n \cat D$ by skipping the $j^{th}$ factor.
\end{remark}

\begin{remark}\label{rmk:thm D analogue}
When $i\colon \cat D\to \cat C$ is the inclusion of a full subassembler satisfying certain assumptions, \cite[Theorem D]{zakharevich:16a} identifies the simplicial assembler $\cat C/i_\bullet$ with another assembler $\cat C\setminus \cat D$. To get an analogous theorem for category with covering families, we could impose similar conditions on an inclusion $i\colon \cat D\to \cat C$:
\begin{itemize}
    \item The category with covering families $\cat C$ has refinements, pullbacks, and every morphism appearing in a cover is a monomorphism.
    \item The subcategory $\cat D$ is \textit{closed under covers} in $\cat C$; meaning whenever $\{C_i\to D\}_{i\in I}$ is a cover in $\cat C$ of an object $D\in \cat D$, then it is a cover in $\cat D$ as well.
    \item The subcategory $\cat D$ \textit{has complements in $\cat C$}; meaning for all objects $D\in \ob\cat D$ and any morphism $f\colon D\to C$ in $\cat C$, there is a cover of $C$ in $\cat C$ containing $f$.
\end{itemize}
Under these assumptions, $\cat C$ is basically an assembler (the main difference being that we do not need to check that the collection of covering families makes $\cat C$ a Grothendieck site) and Zakharevich's proofs work in this context as well.
\end{remark}

As mentioned in the introduction (and discussed further in \cref{sec:abstr recon}), a reconstruction problem asks when an object $X$ can be uniquely reconstructed (up to isomorphism) from some collection of partial data $\{X_i\}_{i\in I}$. If we can find a category with covering families $\cat C$ wherein the partial information $\{X_i\}_{i\in I}$ is constructed as a multimorphism $\{X_i\to X\}_{i\in I}$, then $[X]=\sum_{i\in I}[X_i]$ in $K_0(\cat C)$ by \cref{thm:K0}. Asking whether $X$ is reconstructible up to isomorphism in $\cat C$ from the collection $\{X_i\}_{i\in I}$ is then the same as asking whether the map\[
iso(\ob\cat C)\to K_0(\cat C)
\] which sends $[X]$ to itself is an injection, assuming that isomorphisms are always covers in $\cat C$. That is, the fiber of this map encodes obstructions to reconstructibility.

It would be helpful to realize the map above as part of a long exact sequence of $K$-theory groups, but $iso(\ob \cat C)$ cannot be constructed as a $K$-group in general.
However, if we extend the map linearly to the free Abelian group \[
\ZZ[iso(\ob \cat C)]\to K_0(\cat C),
\] then we can realize $\ZZ[iso(\ob \cat C)]\cong K_0(\cat C, iso)$ where $(\cat C, iso)$ is the category $\cat C$ with only isomorphisms as covers. In fact, we can compute the entire $K$-theory spectrum of $(\cat C, iso)$ using methods from \cite[\S 5.1]{zakharevich:16a}.

\begin{proposition}\label{ex:covs are isos}
There is an equivalence of spectra \[
K(\cat C, iso) \simeq \bigvee_{[c]} \Sigma^\infty_+ B(iso_{\cat C}(c))
\] where the wedge ranges across isomorphism classes of objects in $\cat C$ and $iso_{\cat C}(c)$ is the group of $\cat C$-isomorphisms of $c$.
\end{proposition}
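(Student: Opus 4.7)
The plan is to adapt the argument of \cite[\S 5.1]{zakharevich:16a} for assemblers. First I would analyze the category $W(\cat C, iso)$: when the only non-trivial covers in $\cat C$ are isomorphisms, the covering family condition forces each morphism $\{A_i\}_{i\in I} \to \{B_j\}_{j\in J}$ in $W(\cat C, iso)$ to consist of a bijection $f\colon I \to J$ together with isomorphisms $f_i\colon A_i \xrightarrow{\sim} B_{f(i)}$. In particular, $W(\cat C, iso)$ is itself a groupoid, equivalent to the free symmetric monoidal groupoid (under disjoint union) on the groupoid $iso_{\cat C}$ of isomorphisms between non-distinguished objects of $\cat C$.

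Next, I would verify that $K(\cat C, iso)$ is the group completion of this symmetric monoidal groupoid. The $K$-theory construction in \cite{bohmann/gerhardt/malkiewich/merling/zakharevich:23} is a Segal-style $\Gamma$-space machine built from $W(\cat C)$, and when $W(\cat C)$ is already a groupoid, general principles identify the resulting spectrum with the group completion of the associated $E_\infty$-space. The corresponding statement for assemblers is the content of \cite[\S 5.1]{zakharevich:16a}, and since the assembler axioms beyond those of a category with covering families play no role in that argument, it transfers with only minor bookkeeping.

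The final step is the classical Barratt--Priddy--Quillen--Segal theorem: the group completion of the free symmetric monoidal groupoid on a groupoid $\mathcal{G}$ is $\Sigma^\infty_+ |\mathcal{G}|$. Decomposing the classifying space into path components gives
$$|iso_{\cat C}| \simeq \coprod_{[c]} B(iso_{\cat C}(c)),$$
indexed by isomorphism classes of non-distinguished objects, and since $\Sigma^\infty_+$ converts disjoint unions into wedges, the stated decomposition follows.

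The main obstacle is the middle step, where one must reconcile the spectrum-level construction of \cite{bohmann/gerhardt/malkiewich/merling/zakharevich:23} with the classical group completion of a symmetric monoidal groupoid. The conceptual content is standard once the relevant constructions are unfolded, but it carries the technical weight of the proof; fortunately, Zakharevich's arguments for assemblers can be transported essentially verbatim to this setting.
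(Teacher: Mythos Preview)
Your approach is correct but takes a different route from the paper's. You analyze $W(\cat C, iso)$ directly as the free symmetric monoidal groupoid on $iso_{\cat C}$ and then invoke group completion together with Barratt--Priddy--Quillen--Segal to obtain the answer in one stroke. The paper instead stays within the machinery already set up: it applies D\'evissage (\cref{thm:devissage}) to replace $(\cat C, iso)$ by the wedge $\bigvee_{[c]} \cat C_c$ of single-isomorphism-class subcategories, splits the $K$-theory of that wedge using \cite[Lemma 3.6]{bohmann/gerhardt/malkiewich/merling/zakharevich:23}, and identifies each summand via \cite[Example 4.6]{bohmann/gerhardt/malkiewich/merling/zakharevich:23}. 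Your argument is closer to Zakharevich's original treatment and makes the appearance of $\Sigma^\infty_+$ conceptually transparent, but it imports the identification of the $\Gamma$-space construction with group completion as an external fact you must justify. The paper's argument avoids that step by reducing to results already cited, at the cost of hiding the Barratt--Priddy--Quillen content inside the reference to \cite[Example 4.6]{bohmann/gerhardt/malkiewich/merling/zakharevich:23}.
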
\begin{proof}
    Let $\tilde{\cat C}=\bigvee_{[c]} \cat C_c$, where $\cat C_c$ is the full subcategory of $\cat C$ on objects with a morphism to $c$. Then $\tilde{\cat C}$ is a full subcategory with covering families in $(\cat C, iso)$ and satisfies the conditions of the D\'evissage Theorem. Hence $K(\cat C, iso)\simeq K(\tilde{\cat C})$, and by \cite[Lemma 3.6]{bohmann/gerhardt/malkiewich/merling/zakharevich:23}, $K(\tilde{\cat C})\simeq \bigvee_{c} K(\cat C_c)$. The fact that $K(\cat C_c)\simeq \Sigma^\infty_+ B(iso_{\cat C}(c))$ follows from \cite[Example 4.6]{bohmann/gerhardt/malkiewich/merling/zakharevich:23}.
\end{proof}

\section{Reframing the edge reconstruction conjecture}\label{sec:graphs}
Recall from the introduction that the edge reconstruction conjecture asks whether a graph is determined (up to isomorphism) by a collection of partial information known as the edge deck. In order to reframe this conjecture in the language of $K$-theory, we will need some more precise definitions.

\begin{definition}
The category \textbf{FinGraph} consists of\begin{itemize}
    \item Objects: A finite graph is a pair of finite sets $(V,E)$ equipped with an injective function $\partial : E \rightarrow \text{Pair}(V)$, where $\text{Pair}(V)$ denotes the set of unordered pairs of distinct vertices from $V$. Two vertices which are connected by an edge are said to be \textit{adjacent}.
    \item Morphisms: a graph morphism $f:(V,E, \partial) \rightarrow (V',E', \partial')$ is a pair of injective set maps $f_V : V \rightarrow V'$ and $f_E:E \rightarrow E'$ such that for all $e \in E$, we have $\partial' f_E(e) = f_V ( \partial e)$.
\end{itemize}
\end{definition}
The graphs in \textbf{FinGraph} are finite, irreflexive, undirected, unweighted, and do not have multiple edges between pairs of vertices. The morphisms are inclusions of graphs. 

Given a graph $G$ with vertex set $V$ and edge set $E$, we write $G-e$ for the \textit{edge-deleted subgraph} of $G$ for $e\in E$, which has vertex set $V$ and edge set $E\setminus\{e\}$. 

\begin{definition}
The \textit{edge deck} of a graph $G$ is the multiset of all edge-deleted subgraphs of $G$, up to isomorphism. This multiset is denoted $\ED(G)$. An element of $\ED(G)$ is called an \textit{edge-card}, or just \textit{card} for simplicity.
\end{definition}

\begin{definition}
A graph $G'$ is called an \textit{edge-reconstruction} of $G$ if $\ED(G')=\ED(G)$. A graph $G$ is called \textit{edge-reconstructible} if any edge-reconstruction $G'$ is isomorphic to $G$. 
\end{definition}

The edge reconstruction conjecture is that every graph is uniquely reconstructable from its edge-deck when the number of edges is $n\geq 4$. 

\begin{conjecture}[Edge Reconstruction]\label{ERC}
    Let $G$ and $G'$ be $n$-edged graphs, for $n\geq 4$. If $\ED(G)=\ED(G')$, then $G\cong G'$. 
\end{conjecture}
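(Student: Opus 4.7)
The Edge Reconstruction Conjecture is a famous open problem that has resisted direct combinatorial assault since 1964, so my plan is not to attempt a head-on proof but instead to reformulate it in the $K$-theoretic language the paper has been setting up. The strategy is suggested by the shape of Theorem A in the introduction: if one can manufacture a category with covering families whose covers encode precisely the edge-deletion relation, then the identity $[G]=\sum_{e\in E(G)}[G-e]$ holds automatically in $K_0$ by the $K_0$-theorem (\cref{thm:K0}), and the conjecture becomes the assertion that the natural map $\iota_n\colon \mathcal{G}_n \to K_0(\Gamma_{n,n-1})$ is injective.

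The first concrete step is to build this $\Gamma_{n,n-1}$. I would let its non-basepoint objects be isomorphism classes of graphs with $n$ or $n-1$ edges, with morphisms coming from the inclusions already present in $\bcat{FinGraph}$, and declare the covering families of an $n$-edge graph $G$ to be (at minimum) the edge-deletion family $\{G-e \to G\}_{e\in E(G)}$, together with the identity covers and trivial covers of the basepoint required by \cref{defn:CatFam}. One has to be careful to close this collection of covers under composition so that the covering family axioms hold, which forces the $(n-1)$-edge objects to have their own covers as well; a clean way to do this is to allow any multimorphism that arises as a refinement of edge-deletion, and to verify \cref{defn:CatFam} by induction on edge count. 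Once this is in place, two graphs $G$ and $G'$ with $\ED(G)=\ED(G')$ give the same sum of classes in $K_0(\Gamma_{n,n-1})$, so $\iota_n(G)=\iota_n(G')$, and conversely injectivity of $\iota_n$ implies edge-reconstructibility.

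The main obstacle, and the place where my plan collapses into the original problem, is proving injectivity of $\iota_n$. By \cref{thm:K0}, the kernel of $\iota_n$ is generated by the relations $[G]-\sum_e [G-e]$, and injectivity amounts to showing that no non-trivial $\ZZ$-linear combination of such relations can equate two non-isomorphic $n$-edge graphs. I would attack this by trying to build a companion category with covering families $(\Gamma_{n,n-1}, iso)$ whose covers are only isomorphisms, use \cref{ex:covs are isos} to compute its $K$-theory as $\bigvee_{[G]}\Sigma^\infty_+ B\Aut(G)$, and then fit the forgetful map into a localization sequence via \cref{thm:localization} to identify obstructions in $K_1$ of the quotient. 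This converts the combinatorial question into computing whether a certain connecting homomorphism is trivial, which is exactly where known reconstruction invariants (Euler characteristic, Tutte polynomial, etc., via \cref{rmk:UP of K0}) could be leveraged to rule out classes of counterexamples, but stops short of a full proof.

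Given the expected difficulty, my honest expectation is that this plan does not resolve \cref{ERC} but rather produces the reformulation promised by Theorem A, together with the universal property for reconstruction invariants. A more tractable follow-up would be to restrict $\Gamma_{n,n-1}$ to a subcategory on a class of graphs already known to be edge-reconstructible (trees, regular graphs, claw-free graphs) and verify injectivity of the corresponding $\iota_n$ as a sanity check, then use \cref{thm:devissage} to try to bootstrap up to larger classes. This is the direction I would actually pursue, treating the K-theoretic reformulation as the deliverable and injectivity itself as the continuing open problem.
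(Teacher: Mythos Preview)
Your proposal is essentially correct and aligns with the paper: \cref{ERC} is stated as an open conjecture, not a theorem, and the paper does not prove it. Your plan to build $\Gamma_{n,n-1}$, invoke \cref{thm:K0} to get $[G]=\sum_{C\in\ED(G)}[C]$, and recast the conjecture as injectivity of $\iota_n$ is exactly the reformulation the paper carries out, and your honest assessment that the plan ``collapses into the original problem'' at the injectivity step is precisely the paper's position as well.

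One technical point where you overcomplicate matters: you worry that closing under composition ``forces the $(n-1)$-edge objects to have their own covers'' via refinements of edge-deletion, and propose an induction on edge count. This is unnecessary. In the paper's $\Gamma_{n,n-1}$ the only covers of $(n-1)$-edge graphs are singleton isomorphisms, and the composition axiom is already satisfied: an edge-deck cover $\{C_j\to G\}$ precomposed with isomorphism covers $\{C_j'\cong C_j\}$ is again an edge-deck cover (this is built into \cref{defn:gamma for ED}), and there is no way to stack two edge-deck covers since their sources are $(n-1)$-edge graphs which are never targets of edge-deck covers. So no further refinements or induction are needed. Your localization idea using $\Gamma_n\hookrightarrow\Gamma_{n,n-1}$ also matches the paper's discussion at the end of \cref{subsec:cov fam on FG}, though note the paper shows the induced map on $K_0$ is \emph{not} injective in general (\cref{thm:GERC false}), so the obstruction analysis must distinguish generators $[G]-[G']$ from arbitrary elements of the kernel.
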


Note that we need the condition $n\geq 4$ since the following graphs provide counterexamples for $n=2,3$: \[
\begin{tikzpicture}[scale=0.75]
    \fill (1,0) circle (3pt);
    \fill (-1,0) circle (3pt);
    \fill (0,1) circle (3pt);
    \fill (1,1) circle (3pt);
    \draw[thick] (0,1) -- (1,0) -- (-1,0) -- cycle;

    \fill (5,0) circle (3pt);
    \fill (3,0) circle (3pt);
    \fill (4,1) circle (3pt);
    \fill (5,1) circle (3pt);
    \draw[thick] (5,0) -- (3,0);
    \draw[thick] (5,1) -- (3,0) -- (4,1);
\end{tikzpicture}
\hspace{1cm}\text{ and } \hspace{1cm}
\begin{tikzpicture}[scale=0.75]
    \fill (1,0) circle (3pt);
    \fill (0,0) circle (3pt);
    \fill (1,1) circle (3pt);
    \fill (0,1) circle (3pt);
    
    \draw[thick] (1,0) -- (0,0);
    \draw[thick] (0,1) -- (1,1);
    
    \fill (4,0) circle (3pt);
    \fill (3,0) circle (3pt);
    \fill (4,1) circle (3pt);
    \fill (3,1) circle (3pt);
    
    \draw[thick] (4,0) -- (3,0);
    \draw[thick] (3,0) -- (3,1);
\end{tikzpicture}.
\]

Edge reconstruction can also be studied from the perspective of \textit{(edge) reconstruction invariants}, which are those graph invariants that take the same value on reconstructions.

\begin{definition}
    A graph invariant $f\colon \ob(\textbf{FinGraph})\to A$ valued in an Abelian group $A$ is an \textit{edge reconstruction invariant} if $f(G) = f(G')$ whenever $\ED(G)= \ED(G')$.
\end{definition}

There are many graph invariants that are known to be reconstruction invariants (for $n\geq 4$), such as the number of isolated vertices and Euler characteristic (since both $\abs{V(G)}$ and $\abs{E(G)}$ may be deduced from $\ED(G)$). Moreover, many graph polynomials are vertex reconstruction invariants such as the Tutte polynomial, characteristic polynomial and the chromatic polynomial, among others \cite{kotek} (see also \cite{forman}). 

\subsection{Covering families on graphs}\label{subsec:cov fam on FG}
For $n \in \mathbb{N}$, let $\Gamma_{\leq n}$ denote the full subcategory of $\bcat{FinGraph}$ spanned by graphs with no more than $n$ edges. We get a filtration of $\bcat{FinGraph}$ as
$$\Gamma_{\leq 0} \into \Gamma_{\leq 1} \into \cdots \into \Gamma_{\leq n} \into \cdots$$

When imagining how a graph $G$ might be ``covered'' by subgraphs, an intuitive idea is to ask for a collection of subgraphs whose union is all of $G$ and whose pairwise intersection is as small as possible (i.e. only vertices). We can make this idea precise via a particular category with covering families structure on $\Gamma_{\leq n}$.

\begin{definition}
    Define a collection of covering families on $\Gamma_{\leq n}$ as follows: \begin{itemize}
    \item for every finite (possibly empty) set $I$, $\{\varnothing=\varnothing\}_{i\in I}$ is a covering family;
    \item a collection $\{f_i\colon G_i\to G\}_{i\in I}$ is a covering family if each $f_i$ is the inclusion of $G_i$ as a subgraph of $G$ so that $G=\cup_i G_i$, and for each $i\neq j$ the intersection $G_i\cap G_j$ in $G$ is a (possibly empty) set of vertices of $G$ (i.e. $G_i$ and $G_j$ have no overlapping edges as subgraphs of $G$). 
    \end{itemize}
\end{definition}
It is straightforward to check that $\Gamma_{\leq n}$ is a category with covering families, and also that $\Gamma_{\leq n}$ has refinements. Although this category with covering families structure on $\Gamma_{\leq n}$ is not related to the edge reconstruction conjecture, we think it is both natural and helpful to study.

Note that in this category with covering families, $K$-theory will not care about isolated vertices; that is, $\{ * = *\}_{i\in I}$ is a cover of $*$ for any $I\neq \varnothing$. In particular, this implies that $[*]=0$ in $K_0(\Gamma_{\leq n})$ and consequently $[G] = [G'\amalg (\amalg_k *)] = [G']$ where $G'$ is the full subgraph of $G$ on non-isolated vertices. We will use this observation, along with D\'evissage, to compute the entire $K$-theory spectrum of $\Gamma_{\leq n}$.

\begin{proposition}\label{RPinf KGam1}
    For all $n\geq 1$, there is an equivalence of $K$-theory spectra\[
    \Sigma^\infty_+ \RR\PP^\infty \simeq K(\Gamma_{\leq 1}) \xrightarrow{\sim} K(\Gamma_{\leq n})
    \] induced by the inclusion $\Gamma_{\leq 1}\to \Gamma_{\leq n}$.
\end{proposition}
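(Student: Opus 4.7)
The plan is to apply the D\'evissage Theorem (\cref{thm:devissage}) twice and invoke \cref{ex:covs are isos}.

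First, for the inclusion $\Gamma_{\leq 1}\hookrightarrow \Gamma_{\leq n}$: the ambient $\Gamma_{\leq n}$ has refinements, since two covers $\{G_i\to G\}_{i\in I}$ and $\{G'_j\to G\}_{j\in J}$ admit the common refinement $\{G_i\cap G'_j\to G\}_{(i,j)\in I\times J}$ (pairwise intersections of refinement pieces lie in the vertex set by construction). Moreover, every $G\in \Gamma_{\leq n}$ has an \emph{atomic cover} consisting of the edge-with-endpoints subgraph $\langle e\rangle = (\partial e, \{e\})$ for each $e\in E(G)$ together with the singleton $\{v\}$ for each isolated vertex $v$; each such piece has at most one edge, so lies in $\Gamma_{\leq 1}$. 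Applying \cref{thm:devissage} then gives $K(\Gamma_{\leq 1})\xrightarrow{\sim} K(\Gamma_{\leq n})$.

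Next, to identify $K(\Gamma_{\leq 1})\simeq \Sigma^\infty_+ \mathbb{RP}^\infty$, let $\cat E \subset \Gamma_{\leq 1}$ be the full subcategory with covering families on $\{\varnothing, E\}$, where $E$ is the single-edge graph. The only covers of $E$ in $\cat E$ are $\{E=E\}$ together with vacuous inclusions of $\varnothing$; since $\varnothing$ is the distinguished object and does not appear in $W(\cat E)$, these all reduce to the identity cover of $E$. Applying \cref{ex:covs are isos} to the single non-distinguished isomorphism class $[E]$ with $\mathrm{Aut}(E)=\mathbb{Z}/2$ then gives $K(\cat E)\simeq \Sigma^\infty_+ B(\mathbb{Z}/2) = \Sigma^\infty_+ \mathbb{RP}^\infty$.

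The hard part will be showing that the inclusion $\cat E\hookrightarrow \Gamma_{\leq 1}$ induces an equivalence on $K$-theory, since direct D\'evissage fails: the single-vertex graph $*$ admits no cover by $\{\varnothing, E\}$, as its only proper subgraph is $\varnothing$. I would bypass this using the Localization Theorem (\cref{thm:localization}) applied to the sub-CatFam $\cat V \subset \Gamma_{\leq 1}$ of edge-free graphs. A further D\'evissage $\{\varnothing, *\}\hookrightarrow \cat V$ (using that $k\cdot *$ is covered by its $k$ vertices) gives $K(\cat V)\simeq K(\{\varnothing, *\})$, and the crux is then to show the latter is contractible: the covers $\{*=*\}_{i\in I}$ make $W(\{\varnothing, *\})$ the category of nonempty finite sets with surjective maps, which should admit an explicit contracting simplicial homotopy via an ``absorb an extra $*$'' natural transformation. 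Once $K(\cat V)\simeq *$, the Localization cofiber sequence identifies $K(\Gamma_{\leq 1})$ with $K(\Gamma_{\leq 1}/\cat V)$, and a final identification of this quotient with $\cat E$ completes the argument.
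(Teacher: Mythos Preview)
Your first step (D\'evissage for $\Gamma_{\leq 1}\hookrightarrow\Gamma_{\leq n}$) matches the paper exactly, and your ``absorb an extra $*$'' swindle is precisely the mechanism the paper uses. The difference is in how the vertex contribution is killed. The paper observes a product decomposition $\Gamma_{\leq 1}\cong(\{E\},iso)\times\bcat{FinSet}$, where $\bcat{FinSet}$ carries the (unusual) covering structure in which any jointly surjective family of injections is a cover; the Eilenberg swindle (the natural transformation $\id\vee\id\Rightarrow\id$ coming from the cover $\{\id,\id\}$) then shows $K(\bcat{FinSet})\simeq *$ directly, giving $K(\Gamma_{\leq 1})\simeq K(\{E\},iso)\simeq\Sigma^\infty_+ B(\ZZ/2)$ in one stroke. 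Your route via Localization is more circuitous but uses the same swindle at its core (applied to $\{\varnothing,*\}$ rather than $\bcat{FinSet}$, which is the same thing after your D\'evissage on $\cat V$).

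The genuine gap is your last sentence. Once $K(\cat V)\simeq *$, the cofiber sequence gives $K(\Gamma_{\leq 1})\simeq K((\Gamma_{\leq 1}/\cat V)_\bullet)$, but identifying this simplicial cofiber with $K(\cat E)$ is not free. You would need the Theorem~D analogue (\cref{rmk:thm D analogue}) to replace the simplicial cofiber by $K(\Gamma_{\leq 1}\setminus\cat V)$, and even then $\Gamma_{\leq 1}\setminus\cat V$ is \emph{not} $\cat E$: its non-distinguished objects are all one-edge graphs $E\sqcup k\cdot *$ for $k\geq 0$, not just $E$. You would then need a further D\'evissage (or an equivalence-of-categories argument) to collapse these to $\cat E$, and you would need to verify the hypotheses of \cref{rmk:thm D analogue} (refinements, pullbacks, monicity, closure under covers, complements) for the pair $(\Gamma_{\leq 1},\cat V)$. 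All of this is doable, but it is several nontrivial checks that you have swept under ``a final identification''. The paper's product decomposition sidesteps the entire cofiber analysis.
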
\begin{proof}
    Every graph $G$ with $\leq n$ edges admits a cover by objects of $\Gamma_{\leq 1}$, namely by each edge of $G$ and the isolated vertices of $G$. By \cref{thm:devissage}, the inclusion $\Gamma_{\leq 1}\to \Gamma_{\leq n}$ induces an equivalence $K(\Gamma_{\leq 1})\to K(\Gamma_{\leq n})$. 
    
    To see that $K(\Gamma_{\leq 1})\simeq \Sigma^\infty_+\RR\PP^\infty$, observe that $\Gamma_{\leq 1}\cong (\{\grI\}, iso) \times \bcat{FinSet}$, where $\bcat{FinSet}$ is given the structure of a category with covering families where a cover of $\underline n$ is any collection of injections $\{\underline{n}_i\to \underline n\}_{i \in I}$ which are jointly surjective (the images may have non-empty intersection). We claim that with this structure, the $K$-theory of $\bcat{FinSet}$ is contractible, and consequently $K(\Gamma_{\leq 1})\simeq K(\{\grI\}, iso)\simeq \Sigma^\infty_+ B(\ZZ/2)\simeq \Sigma^\infty_+ \RR\PP^\infty$ by \cref{ex:covs are isos}. We emphasize that this category with covering structure on $\bcat{FinSet}$ is an unusual one, and so this claim does not contradict the Barratt--Priddy--Quillen Theorem.

    To see that $K(\bcat{FinSet})\simeq *$ in this case, we study its category of covers $W(\bcat{FinSet})$. The proof is similar to the Eilenberg swindle (see the \textit{flasque categories} of \cite[V.1.9]{weibel:13}). Consider the duplication functor $\id\vee \id\colon W(\bcat{FinSet})\to W(\bcat{FinSet})$ that sends a tuple $\{\underline{n}_i\}_{i\in I}$ to two copies of that tuple, $\{\underline{n}_i\}_{i\in I}\amalg \{\underline{n}_i\}_{i\in I}$. There is a natural transformation $\nabla$ from $\id\vee \id$ to the identity, since for every object $\underline{n}_i$, the multimorphism $\{\id_{\underline{n}}, \id_{\underline{n}}\}$ consisting of two copies of the identity is a cover. Then $B(\nabla)$ is a homotopy between the identity and $B(\id \vee \id)\simeq B(\id) \vee B(\id)$. This means that the identity on $W(\bcat{FinSet})$ is null-homotopic, which implies $K(\bcat{FinSet})\simeq *$ as desired.
\end{proof}

In particular, taking the colimit, we can compute $K(\bcat{FinGraph})$ for this natural notion of cover. 

\begin{corollary}\label{fingraph spectrum}
   There is an equivalence of spectra $K(\bcat{FinGraph})\simeq \Sigma^\infty_+\RR\PP^\infty$.
\end{corollary}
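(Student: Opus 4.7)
The plan is to apply the Dévissage Theorem (\cref{thm:devissage}) directly to the inclusion $\Gamma_{\leq 1} \hookrightarrow \bcat{FinGraph}$, generalizing the argument used in \cref{RPinf KGam1}. First, I would equip $\bcat{FinGraph}$ with the evident covering family structure extending the one on each $\Gamma_{\leq n}$: a collection $\{G_i \to G\}_{i \in I}$ is a cover if each $G_i$ is a subgraph of $G$ whose union is $G$ and whose pairwise intersections consist only of vertices. With respect to this structure, $\Gamma_{\leq 1}$ is a full subcategory with covering families of $\bcat{FinGraph}$, and the inclusion is a morphism in $\bcat{CatFam}$.

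Next I would verify the hypotheses of Dévissage. The category $\bcat{FinGraph}$ has refinements: any two covers of a graph $G$ are commonly refined by the cover consisting of the single edges of $G$ together with its isolated vertices, which refines any given cover since each edge or isolated vertex lives inside exactly one piece of any cover. Moreover, every object of $\bcat{FinGraph}$ admits a cover by objects of $\Gamma_{\leq 1}$, namely by its single-edge subgraphs together with its isolated vertices, which is a finite cover because every object of $\bcat{FinGraph}$ has finitely many edges and vertices. Applying \cref{thm:devissage} then yields an equivalence $K(\Gamma_{\leq 1}) \xrightarrow{\sim} K(\bcat{FinGraph})$, and combining this with the computation $K(\Gamma_{\leq 1}) \simeq \Sigma^\infty_+ \RR\PP^\infty$ from \cref{RPinf KGam1} completes the proof.

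An alternative route, more literally following the phrase ``taking the colimit,'' would express $\bcat{FinGraph} = \colim_n \Gamma_{\leq n}$ in $\bcat{CatFam}$ and use that every transition map $K(\Gamma_{\leq n}) \to K(\Gamma_{\leq n+1})$ is an equivalence by \cref{RPinf KGam1}. The main obstacle with this second approach is verifying that $K$ commutes with this sequential colimit, which would require unpacking the construction of $K$ through the category of covers $W$ and checking that the Waldhausen-style construction is compatible with filtered colimits of categories with covering families. The Dévissage approach sketched above sidesteps this issue entirely and keeps the argument uniform with the proof of \cref{RPinf KGam1}, which is why I would present it as the main proof.
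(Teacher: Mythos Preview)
Your D\'evissage argument is correct and complete: the cover of any finite graph by its single edges together with its isolated vertices is a finite cover by objects of $\Gamma_{\leq 1}$, and this same cover serves as a common refinement of any two covers since each edge lies in a unique member of any cover and each isolated vertex lies in at least one. So \cref{thm:devissage} applies directly.

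This is a genuinely different route from the paper, which simply writes ``taking the colimit'' and leaves it at that---i.e.\ the paper invokes exactly your alternative approach without addressing the point you flag about $K$ commuting with the sequential colimit $\bcat{FinGraph} = \colim_n \Gamma_{\leq n}$. Your D\'evissage argument is more self-contained: it reuses only the hypotheses already verified in the proof of \cref{RPinf KGam1} (refinements, covers by $\Gamma_{\leq 1}$-objects) and avoids any appeal to how the $K$-theory machine interacts with filtered colimits in $\bcat{CatFam}$. The colimit approach is morally fine---$W$ and geometric realization do preserve filtered colimits of this shape---but your version makes the corollary an honest consequence of the tools already on the table.
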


It is perhaps notable and surprising that we can compute the entire $K$-theory spectrum for this evident notion of cover, as $K$-theory computations are notoriously difficult. We expect that there are other notions of covers suitable to finite graphs that may yield more interesting $K$-theories. The next one we introduce is constructed specifically for the purposes of studying edge reconstruction.

Let $\Gamma_n$ be the full subcategory on $\Gamma_{\leq n} \setminus \Gamma_{\leq n-1}$ and the initial object $\varnothing$, i.e. graphs with exactly $n$ edges and isomorphisms between them (along with the initial object and morphisms).

\begin{definition}
Define a collection of covering families on $\Gamma_{n}$ as follows: \begin{itemize}
    \item for every finite (possibly empty) set $I$, $\{\varnothing=\varnothing\}_{i\in I}$ is a covering family;
    \item for every isomorphism $f\colon G\xrightarrow{\cong} G'$ in $\Gamma_n$, the singleton $\{f\colon G\xrightarrow{\cong} G'\}$ is a covering family.
\end{itemize}
Then it is straightforward to check that $\Gamma_n$ is a category with covering families.
\end{definition}

\begin{remark}\label{rmk:K0 of Gamma k}
    By \cref{thm:K0}, $K_0(\Gamma_n) \cong \ZZ[\mathcal G_n]$, where $\mathcal G_n$ is the set of isomorphism classes of graphs with $n$ edges. In fact, by \cref{ex:covs are isos}, we can compute\[
    K(\Gamma_n) \simeq \bigvee_{[G]} \Sigma^\infty_+ B(iso_{\bcat{FinGraph}}(G))
    \] where the wedge ranges over isomorphism classes of graphs with $n$ edges.
\end{remark}

Recall that the \textit{edge deck} of a graph $G$ is the multiset of all edge-deleted subgraphs of $G$, up to isomorphism. This multiset is denoted $\ED(G)$, and an element of $\ED(G)$ is called a \textit{edge-card} or just \textit{card} for simplicity. The edge reconstruction conjecture is that every graph is uniquely reconstructable from its edge deck when the number of edges is $n\geq 4$. In other words, if $\ED(G)=\ED(G')$, then $G\cong G'$.

We will now construct a version of $\Gamma_n$ which encodes edge decks. Let $\Gamma_{n,n-1}$ be the full subcategory on $\Gamma_{\leq n}\setminus \Gamma_{\leq n-2}$ and the initial object $\varnothing$; that is, $\Gamma_{n,n-1}$ consists of graphs with $n$ or $n-1$ edges and subgraph inclusions between them (along with the initial object and morphisms).

\begin{definition}\label{defn:gamma for ED}
    Define a collection of covering families on $\Gamma_{n,n-1}$ as follows: \begin{itemize}
    \item for every finite (possibly empty) set $I$, $\{\varnothing=\varnothing\}_{i\in I}$ is a covering family;
    \item for every isomorphism $f\colon G\xrightarrow{\cong} G'$ in $\Gamma_{n,n-1}$, the singleton \[\{f\colon G\xrightarrow{\cong} G'\}\] is a covering family;
    \item if $G$ is a graph with $n$ edges, enumerate $\ED(G)=\{C_1,\dots,C_n\}$ and fix the inclusions of the cards $i_j\colon C_j\to G$. Then \[
    \{f_j\colon C'_j\to G\}_{j\in[n]} 
    \] is a covering family if and only if there are isomorphisms $g_j\colon C'_j\to C_j$ so that $f_j=i_j\circ g_j$ for all $j=1,\dots,n$. 
\end{itemize}
Then $\Gamma_n$ is a category with covering families.
\end{definition}

\begin{remark}
    In the language of \cref{sec:abstr recon}, we may view $\Gamma_{n,n-1}$ as a reconstruction setting with distinguished object $\varnothing$, small objects $(n-1)$-edged graphs, large objects $n$-edge graphs, and decompositions given by edge decks. The induced categories with covering families structure given by \cref{ex:rec setting covering fams 2} is the same as the one given by \cref{defn:gamma for ED}. Essentially, the only cover in $\Gamma_{n,n-1}$ that is not given by an isomorphism is the edge deck of a graph with $n$-edges.
\end{remark}

\begin{remark}
By \cref{thm:K0}, we know \[  K_0(\Gamma_{n,n-1}) \cong \ZZ[\ob\Gamma_{n,n-1}]/\sim.
\] where $\sim$ is generated by $[G]=[G']$ whenever $G\cong G'$ as well as $[G] = \sum_{C\in \ED(G)} [C]$ when $G$ has $n$ edges. Note that inclusion functor $\Gamma_n\to \Gamma_{n,n-1}$ is a morphism of categories with covering families, which gives us a well-defined map on $K_0$. In particular, by \cref{rmk:K0 of Gamma k}, we get a well-defined map \[
\iota_n\colon \mathcal G_n \to K_0(\Gamma_{n,n-1})
\] which sends $[G]$ to itself.
\end{remark}

\begin{proposition}\label{prop:equal ED iff equal in K0}
    Let $G$ and $G'$ be graphs with $n$ edges. Then $\ED(G)=\ED(G')$ if and only if $[G]=[G']$ in $K_0(\Gamma_{n,n-1})$.
\end{proposition}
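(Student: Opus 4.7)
The plan is to prove each direction separately. The forward direction is essentially immediate from the relations defining $K_0(\Gamma_{n,n-1})$: if $\ED(G) = \ED(G')$ as multisets, then using the edge-deck covering relations we get
\[
[G] \;=\; \sum_{C \in \ED(G)} [C] \;=\; \sum_{C \in \ED(G')} [C] \;=\; [G']
\]
in $K_0(\Gamma_{n,n-1})$, where the middle equality uses that equal multisets of isomorphism classes give equal formal sums in $\ZZ[\ob\Gamma_{n,n-1}]$ modulo the isomorphism relations.

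For the reverse direction, the strategy is to construct a covering invariant which remembers the edge deck and then invoke the universal property of $K_0$ from \cref{rmk:UP of K0}. Specifically, I would define a map
\[
F \colon \ob\Gamma_{n,n-1} \longrightarrow \ZZ[\mathcal{G}_{n-1}]
\]
by $F(\varnothing) = 0$, $F(H) = [H]$ when $H$ has $n-1$ edges, and $F(G) = \sum_{C\in \ED(G)}[C]$ when $G$ has $n$ edges, where $\mathcal{G}_{n-1}$ denotes the set of isomorphism classes of $(n-1)$-edge graphs. I would then verify that $F$ respects all three types of covers in $\Gamma_{n,n-1}$: the empty covers of $\varnothing$ and the singleton isomorphism covers are handled trivially (isomorphic $n$-edge graphs have equal edge decks as multisets of isomorphism classes), while for the edge-deck cover $\{C_j \to G\}_{j\in [n]}$ of a graph $G$ with $n$ edges, both $F(G)$ and $\sum_j F(C_j)$ equal $\sum_{C\in \ED(G)}[C]$ by construction.

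By the universal property of $K_0$ recorded in \cref{rmk:UP of K0}, $F$ factors uniquely through a homomorphism $\widetilde{F}\colon K_0(\Gamma_{n,n-1}) \to \ZZ[\mathcal{G}_{n-1}]$. Hence if $[G] = [G']$ in $K_0(\Gamma_{n,n-1})$ with both $G$ and $G'$ having $n$ edges, then
\[
\sum_{C \in \ED(G)} [C] \;=\; \widetilde{F}([G]) \;=\; \widetilde{F}([G']) \;=\; \sum_{C \in \ED(G')} [C]
\]
in the free Abelian group $\ZZ[\mathcal{G}_{n-1}]$. Since isomorphism classes of $(n-1)$-edge graphs form a basis, the multisets $\ED(G)$ and $\ED(G')$ must coincide.

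There is no serious obstacle in this argument; the only subtlety is checking that $F$ is a well-defined covering invariant, which reduces to confirming that the edge-deck relation is built into the definition of $F$ on $n$-edge objects. The key conceptual point is recognizing that the target $\ZZ[\mathcal{G}_{n-1}]$ is exactly the right place to land because it separates multisets of $(n-1)$-edge isomorphism classes.
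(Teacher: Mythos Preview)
Your proof is correct and essentially matches the paper's. For the reverse direction the paper simply asserts that two $(n-1)$-edge graphs with equal class in $K_0(\Gamma_{n,n-1})$ must be isomorphic, whereas your construction of the covering invariant $F\colon \ob\Gamma_{n,n-1}\to\ZZ[\mathcal{G}_{n-1}]$ and appeal to \cref{rmk:UP of K0} makes that step explicit; this is a slightly more self-contained justification of the same underlying idea.
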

\begin{proof}
     The forward direction is true by construction, and for the other direction note that if $[G]=[G']$ in $K_0(\Gamma_{n,n-1})$ then $\sum_{C\in \ED(G)}[C]=\sum_{C'\in \ED(G')} [C']$. Then there is a bijective correspondence between $\ED(G)$ and $\ED(G')$ so that $[C]=[C']$ in $K_0(\Gamma_{n,n-1})$, but for two graphs with $n-1$ edges to be equal in $K_0(\Gamma_{n,n-1})$, they must be isomorphic. Thus $C\cong C'$ and so $\ED(G)=\ED(G')$. 
\end{proof}

The description of $K_0(\Gamma_{n,n-1})$ above implies that $K_0(\Gamma_{n,n-1})$ is the universal home for reconstruction invariants, in the following sense.

\begin{corollary}\label{cor:univ prop reconstr invar}
     Let $f\colon \mathcal {G}_n\to A$ be a reconstruction invariant valued in an Abelian group $A$. Then $f$ factors uniquely as\[
    \begin{tikzcd}
        \mathcal{G}_n \ar[rd, swap, "f"] \ar[r, "i_n"] & K_0(\Gamma_{n,n-1}) \ar[d, dashed, "\hat f"]\\
        & A
    \end{tikzcd}
    \]
\end{corollary}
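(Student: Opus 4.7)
The plan is to deduce this from the universal property of $K_0$ recorded in \cref{rmk:UP of K0}, which identifies group homomorphisms $K_0(\Gamma_{n,n-1}) \to A$ with covering invariants $F : \ob \Gamma_{n,n-1} \to A$. So the strategy is to extend $f$ on $\mathcal{G}_n$ to such a covering invariant on all of $\ob \Gamma_{n,n-1}$.

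First I would set $F(\varnothing) = 0$ (forced by the empty cover) and $F(G) = f(G)$ for $G \in \mathcal{G}_n$. The remaining task is to choose values $F(C) \in A$ for each isomorphism class $C \in \mathcal{G}_{n-1}$ so that the edge-deck cover relation $f(G) = \sum_{C \in \ED(G)} F(C)$ holds for every $G \in \mathcal{G}_n$. The compatibility of these constraints is exactly what \cref{prop:equal ED iff equal in K0} provides: two $n$-edge graphs $G_1, G_2$ impose identical equations on the $F(C)$ precisely when $\ED(G_1) = \ED(G_2)$, in which case $[G_1] = [G_2]$ in $K_0(\Gamma_{n,n-1})$ and the reconstruction invariance of $f$ forces $f(G_1) = f(G_2)$. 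Hence the equations are mutually consistent.

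Once $F$ is constructed, \cref{rmk:UP of K0} delivers a unique group homomorphism $\hat f : K_0(\Gamma_{n,n-1}) \to A$ with $\hat f([X]) = F(X)$ for every object $X$, and in particular $\hat f \circ \iota_n = f$. The uniqueness of $\hat f$ is then inherited from the universal property: any factorization must coincide with $F$ on the generators coming from $\ob \Gamma_{n,n-1}$, and these generate $K_0(\Gamma_{n,n-1})$ as a group.

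The main obstacle I foresee is producing the values $F(C)$ in a coherent way on $(n-1)$-edge graphs. Pairwise consistency from \cref{prop:equal ED iff equal in K0} handles coincident equations, but actually solving the full linear system $\{f(G) = \sum_{C \in \ED(G)} F(C)\}_{G \in \mathcal{G}_n}$ relies on the presentation of $K_0(\Gamma_{n,n-1})$: using the edge-deck relations to eliminate the $n$-edge generators leaves a group generated by $\mathcal{G}_{n-1}$, and the consistent partial data on the image of $\iota_n$ lifts to well-defined values on this generating set.
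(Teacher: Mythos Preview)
The paper gives no argument beyond the sentence preceding the corollary, treating it as an immediate consequence of \cref{prop:equal ED iff equal in K0}. Your route via \cref{rmk:UP of K0} is the natural way to make this precise, but the obstacle you flag at the end is real and your proposed resolution does not go through.

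Extending $f$ to a covering invariant on $\ob\Gamma_{n,n-1}$ means solving the system $f(G) = \sum_{C \in \ED(G)} F(C)$ over $A$, and the pairwise consistency you extract from \cref{prop:equal ED iff equal in K0} only handles the case where two equations are literally identical. It says nothing about genuine $\ZZ$-linear dependencies among \emph{distinct} edge decks---and such dependencies exist. By \cref{thm:GERC false} the kernel of $\varepsilon_n$ is nontrivial, so there are relations $\sum_i a_i\,\iota_n([G_i]) = 0$ in $K_0(\Gamma_{n,n-1})$ among $n$-edge graphs $G_i$ with pairwise distinct edge decks; any group homomorphism $\hat f$ would then force $\sum_i a_i f(G_i) = 0$, which is strictly stronger than reconstruction invariance. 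The element $\bigl([\grII]-[\grL]\bigr)^2 \in \ker\varepsilon_4$ already witnesses this for $n=4$. Uniqueness fails for a parallel reason: under the identification $K_0(\Gamma_{n,n-1})\cong\ZZ[\mathcal{G}_{n-1}]$ every element of $\iota_n(\mathcal{G}_n)$ has coefficient sum $n$, so the image does not generate $K_0(\Gamma_{n,n-1})$, and different extensions $F$ produce different $\hat f$. Your uniqueness claim only shows that $\hat f$ is determined by the auxiliary choice of $F$, not by $f$ alone. In short, if $\hat f$ is to be a group homomorphism, neither existence nor uniqueness follows from reconstruction invariance as defined; the paper's one-line justification does not address this either.
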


We can now reformulate the edge reconstruction conjecture as a statement about $K_0(\Gamma_{n,n-1})$.

\begin{corollary}\label{cor:VRC iff inj}
    The edge reconstruction conjecture is equivalent to the statement that the map\[
   i_n\colon \mathcal G_n \to K_0(\Gamma_{n,n-1})
    \] which sends $[G]$ to itself is an injection for all $n\geq 4$.
\end{corollary}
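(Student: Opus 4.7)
The statement is essentially a direct corollary of \Cref{prop:equal ED iff equal in K0}, so the plan is mostly to unwind the definitions carefully and check that they match up.

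First, I would restate what injectivity of $\iota_n$ means. By construction $\iota_n$ sends an isomorphism class $[G]\in\mathcal G_n$ to the class $[G]\in K_0(\Gamma_{n,n-1})$ of $G$ viewed as an object of $\Gamma_{n,n-1}$. Thus injectivity of $\iota_n$ is precisely the assertion that, for any two $n$-edge graphs $G,G'$, if $[G]=[G']$ in $K_0(\Gamma_{n,n-1})$ then $G\cong G'$.

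Next, I would invoke \Cref{prop:equal ED iff equal in K0}, which identifies equality of classes in $K_0(\Gamma_{n,n-1})$ of $n$-edge graphs with equality of their edge decks: $[G]=[G']$ in $K_0(\Gamma_{n,n-1})$ iff $\ED(G)=\ED(G')$. Substituting this characterization into the reformulation above, injectivity of $\iota_n$ becomes the assertion that $\ED(G)=\ED(G')$ implies $G\cong G'$ for all $n$-edge graphs $G,G'$, which is precisely \Cref{ERC} restricted to the set $\mathcal G_n$. Quantifying over all $n\geq 4$ yields the full edge reconstruction conjecture.

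Both directions of the equivalence fall out of this single substitution, so no separate arguments are needed. The only nontrivial content is already encapsulated in \Cref{prop:equal ED iff equal in K0} (whose proof in turn rests on the presentation of $K_0$ from \Cref{thm:K0} and the fact that the only non-isomorphism covers in $\Gamma_{n,n-1}$ are precisely the edge-deck covers). Consequently, there is no real obstacle to overcome here beyond careful bookkeeping: one must make sure that the map $\iota_n$ is interpreted as sending an isomorphism class in $\mathcal G_n$ to its class in $K_0(\Gamma_{n,n-1})$, and that the hypothesis ``for all $n\geq 4$'' in \Cref{cor:VRC iff inj} matches the quantification in \Cref{ERC}.
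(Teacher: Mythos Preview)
Your proposal is correct and follows essentially the same approach as the paper: both reduce the statement directly to \Cref{prop:equal ED iff equal in K0}, observing that injectivity of $\iota_n$ is exactly the implication $[G]=[G']\Rightarrow G\cong G'$ for $n$-edge graphs, and that the proposition translates the hypothesis $[G]=[G']$ into $\ED(G)=\ED(G')$. Your treatment of the quantification over $n\geq 4$ is slightly more explicit than the paper's, but the argument is the same.
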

\begin{proof}
    By \cref{prop:equal ED iff equal in K0}, the statement that $\ED(G)=\ED(G')$ is equivalent to the statement that $[G]=[G']$ in $K_0(\Gamma_{n,n-1})$. Therefore the edge reconstruction conjecture is equivalent to the statement that if $[G]=[G']$ in $K_0(\Gamma_{n,n-1})$, then $G\cong G'$, which is exactly to say that $\iota$ is injective. 
\end{proof}

\begin{remark}
    A more general version of the edge reconstruction conjecture considers $k$-edge decks, $\ED_k(G)$, which is the multiset of all subgraphs formed by removing $k$ edges from $G$. The $k$-edge reconstruction conjecture states that a graph with $n\geq k+3$ edges is uniquely reconstructable from $\ED_k(G)$. Our work can be generalized to this setting by replacing $n-1$ with $n-k$ everywhere.
\end{remark}

The inclusions of categories with covering families $\Gamma_n\hookrightarrow \Gamma_{n,n-1}$ and $\Gamma_{n-1}\hookrightarrow \Gamma_{n,n-1}$ have interesting consequences on $K$-theory. We state the implications of the latter map here, but reserve our discussion of the former map for the next subsection.

\begin{theorem}\label{thm:dev on Gamma}
    The inclusion $\Gamma_{n-1}\hookrightarrow \Gamma_{n,n-1}$ induces an equivalence on $K$-theory,\[
    K(\Gamma_{n-1}) \xrightarrow{\simeq} K(\Gamma_{n,n-1}).
    \]
\end{theorem}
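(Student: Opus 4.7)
The plan is to apply the D\'evissage Theorem (\cref{thm:devissage}) to the inclusion $i\colon \Gamma_{n-1} \hookrightarrow \Gamma_{n,n-1}$. Three conditions must be verified: (a) $i$ is an inclusion of a full subcategory with covering families; (b) every object of $\Gamma_{n,n-1}$ admits a cover by objects of $\Gamma_{n-1}$; and (c) $\Gamma_{n,n-1}$ has refinements.

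Condition (a) is immediate from the definitions: $\Gamma_{n-1}$ is the full subcategory of $\Gamma_{n,n-1}$ spanned by $\varnothing$ together with $(n-1)$-edge graphs, and each cover in $\Gamma_{n-1}$ (either an empty-graph cover $\{\varnothing=\varnothing\}_I$ or a singleton isomorphism $\{f\colon G\xrightarrow{\cong} G'\}$) is also a cover in $\Gamma_{n,n-1}$. For condition (b), the distinguished object $\varnothing$ is covered by the empty family, any $(n-1)$-edge graph is covered by its identity singleton, and any $n$-edge graph $G$ is covered by its chosen edge deck $\{i_j\colon C_j\to G\}_{j\in[n]}$, whose cards are $(n-1)$-edge graphs living in $\Gamma_{n-1}$.

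The substantive step is condition (c), which I would handle by case analysis on the common target of two covers. Two covers of $\varnothing$, or two singleton isomorphism covers of the same object, admit easy common refinements (by any empty-graph cover, and by the identity singleton, respectively). The interesting case is two covers of an $n$-edge graph $G$, where each cover is either an isomorphism singleton $\{f\colon G'\xrightarrow{\cong} G\}$ or has the form $\{i_j\circ g_j\colon C'_j\to G\}_{j\in[n]}$ for isomorphisms $g_j\colon C'_j\to C_j$. In both situations, the canonical edge deck $\{i_j\colon C_j\to G\}_{j\in[n]}$ itself provides a common refinement: it refines an edge-deck cover $\{i_j\circ g_j\}$ via the maps $g_j^{-1}\colon C_j\to C'_j$, and it refines an isomorphism singleton $\{f\colon G'\xrightarrow\cong G\}$ via the composites $f^{-1}\circ i_j\colon C_j\to G'$.

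The main (mild) obstacle is keeping track in case (c) of how different non-canonical edge-deck covers of a given graph relate to the fixed reference edge deck, but this is bookkeeping rather than a genuine difficulty. Once the three conditions are verified, \cref{thm:devissage} immediately yields the desired equivalence $K(\Gamma_{n-1})\xrightarrow{\simeq} K(\Gamma_{n,n-1})$.
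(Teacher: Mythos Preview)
Your proposal is correct and follows exactly the approach of the paper, which simply invokes the D\'evissage Theorem and remarks that the refinement condition is straightforward; you have merely made explicit the case analysis that the paper leaves to the reader.
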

\begin{proof}
    This is a simple application of D\'evissage (\cref{thm:devissage}). The refinement condition on $\Gamma_{n,n-1}$ is straightfoward to check.
\end{proof}

In particular, by \cref{ex:covs are isos} (c.f. \cref{rmk:K0 of Gamma k}), we can identify the entire $K$-theory spectrum of $\Gamma_{n,n-1}$.

\begin{corollary}
    There is an equivalence of spectra\[
    K(\Gamma_{n,n-1}) \simeq \bigvee_{[G]} \Sigma^\infty_+ B{\rm Aut}(G)
    \] where the wedge ranges over isomorphism classes of graphs $G$ with $n-1$ edges and ${\rm Aut}(G)$ is the group of graph isomorphisms of $G$.
\end{corollary}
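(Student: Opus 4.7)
The plan is to combine \cref{thm:dev on Gamma} with \cref{ex:covs are isos} in a manner directly analogous to the computation described in \cref{rmk:K0 of Gamma k}. First, \cref{thm:dev on Gamma} already provides the equivalence $K(\Gamma_{n-1}) \xrightarrow{\simeq} K(\Gamma_{n,n-1})$ induced by the inclusion of categories with covering families, so the task reduces to identifying the $K$-theory spectrum of $\Gamma_{n-1}$ on the nose.

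Next, I would observe that the covering family structure on $\Gamma_{n-1}$ consists only of the trivial empty covers of the distinguished object $\varnothing$ together with singleton covers given by isomorphisms between $(n-1)$-edge graphs. In particular, $\Gamma_{n-1}$ fits into the format of $(\cat{C}, iso)$ studied in \cref{ex:covs are isos}: every non-initial object is covered only by isomorphisms out of it, and the distinguished object plays the role of the basepoint. Applying \cref{ex:covs are isos} then yields
\[
K(\Gamma_{n-1}) \simeq \bigvee_{[G]} \Sigma^\infty_+ B\bigl(iso_{\Gamma_{n-1}}(G)\bigr),
\]
where the wedge ranges over isomorphism classes of $(n-1)$-edge graphs. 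Since morphisms in $\Gamma_{n-1}$ between two non-initial graphs are just graph morphisms in $\bcat{FinGraph}$, the group $iso_{\Gamma_{n-1}}(G)$ is exactly the graph automorphism group ${\rm Aut}(G)$. Composing this with the equivalence from \cref{thm:dev on Gamma} gives the desired identification.

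There is essentially no obstacle beyond citing the two results correctly; the only subtlety worth spelling out is the verification that the covering family structure on $\Gamma_{n-1}$ really matches the hypothesis of \cref{ex:covs are isos} (i.e.\ the empty covers on $\varnothing$ do not contribute anything extra since $\varnothing$ serves as the basepoint in the wedge decomposition). This is a purely bookkeeping check that can be relegated to at most one sentence in the final write-up.
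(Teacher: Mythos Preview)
Your proposal is correct and matches the paper's approach exactly: the paper simply notes that the corollary follows from \cref{thm:dev on Gamma} together with \cref{ex:covs are isos} (cf.\ \cref{rmk:K0 of Gamma k}), which is precisely the two-step argument you outline.
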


The inclusion of categories with covering families $\iota_n\colon \Gamma_n\into \Gamma_{n,n-1}$ induces a map on $K_0$ which captures the edge reconstruction conjecture in the sense that if the map $K_0(\iota_n)\colon K_0(\Gamma_n)\to K_0(\Gamma_{n,n-1})$ is injective then the edge reconstruction conjecture is true for graphs with $n$ edges. To see this, note that for any $G,G'\in \Gamma_n$, we have $[G]=[G']$ in $K_0(\Gamma_{n,n-1})$ if and only if $\ED(G)=\ED(G')$. If $K_0(\iota_n)$ is injective, then this implies $[G]=[G']$ in $K_0(\Gamma_n)$, i.e. $G\cong G'$ in $\Gamma_n$. One might hope to use the Localization Theorem (\cref{thm:localization}), which tells us that\[
K_1((\Gamma_{n,n-1}/\iota_n)_\bullet)\xrightarrow{\bndry} K_0(\Gamma_n)\xrightarrow{K_0(\iota_n)} K_0(\Gamma_{n,n-1}),
\] is exact, so it would suffice to show that the image of $\bndry$ is trivial. However, as we show in the next subsection, the map $K_0(\iota_n)$ is not injective. This does not disprove the edge reconstruction conjecture, but rather a stronger version of it, as discussed in the next subsection. 

Nonetheless, it is possible that a better understanding of $K_1((\Gamma_{n,n-1}/\iota_n)_\bullet)$ would shed light on the kernel of $K_0(\iota_n)$ and the edge reconstruction conjecture. The image of $\bndry$ is closely related to the edge reconstruction conjecture: a counterexample to the edge reconstruction conjecture is precisely a non-trivial element of the image of $\bndry$ of the form $[G]-[G']$ for graphs with $n$-edges $G,G'$. One could use methods similar to \cite{zakharevich:16b} to give a description of $K_1$ of a category with covering families and approach the problem by studying $K_1((\Gamma_{n,n-1}/\iota_n)_\bullet)$ and $\partial$.
%-------------------------%-------------------------%-------------------------%-------------------------%-------------------------%-------------------------%-------------------------%-------------------------
\subsection{Generalized Edge Reconstruction}\label{subsec:general ERC}

Although $K_0(\iota_n)$ being injective implies the edge reconstruction conjecture is true for graphs with $n$ edges, the converse direction does not hold. If the edge reconstruction conjecture is true, then $\ED(G)=\ED(G')$ implies $G\cong G'$ and therefore $[G]=[G']$ in $K_0(\Gamma_n)$, but this is not enough to conclude $K_0(\iota_n)$ is injective, since the kernel may contain formal sums. However, we can use this observation to rephrase injectivity of $K_0(\iota_n)$ as equivalent to a much stronger version of the edge reconstruction conjecture.

\begin{conjecture}[Generalized edge reconstruction conjecture]
    Fix $n\geq 4$. For any $k\geq 1$, a collection $\{G_1,\dots, G_k\}$ of $n$-edged graphs is uniquely reconstructible from $\coprod_{i=1}^k \ED(G_i)$.
\end{conjecture}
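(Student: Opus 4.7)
The plan is to \textbf{disprove} this conjecture for every $n\geq 2$ by exhibiting, for each such $n$, two distinct positive combinations in $\mathbb{Z}[\mathcal{G}_n]$ whose images agree in $K_0(\Gamma_{n,n-1})$. By \cref{thm:K0} together with the identification $K_0(\Gamma_{n,n-1})\cong \mathbb{Z}[\mathcal{G}_{n-1}]$ coming from \cref{thm:dev on Gamma}, this amounts to producing a nonzero element in the kernel of the edge-deck map $\iota_n\colon \mathbb{Z}[\mathcal{G}_n]\to \mathbb{Z}[\mathcal{G}_{n-1}]$ which can be written as a difference of two distinct positive combinations.

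The base cases $n=2,3$ use the classical failures of single-graph reconstruction: $P_3\amalg K_1\not\cong 2K_2$ but both have edge deck $2[K_2\amalg 2K_1]$, and $K_{1,3}\not\cong K_3\amalg K_1$ but both have edge deck $3[P_3\amalg K_1]$. These yield counterexamples with $k=1$.

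For $n\geq 4$, I would proceed by induction using a \emph{swap construction}. Fix the two-edge pair $G=P_3\amalg K_1$ and $G'=2K_2$ satisfying $G\not\cong G'$ and $\ED(G)=\ED(G')$. Given a counterexample at $(n-2)$ edges --- that is, distinct multisets $\mathcal{F}=\{F_1,\dots,F_k\}$ and $\mathcal{F}'=\{F_1',\dots,F_{k'}'\}$ with $\sum_i\ED(F_i)=\sum_j\ED(F_j')$ --- I form the $n$-edge multisets
\begin{align*}
    \mathcal{M} &= \{G\amalg F_i\}_{i=1}^k \cup \{G'\amalg F_j'\}_{j=1}^{k'},\\
    \mathcal{M}' &= \{G'\amalg F_i\}_{i=1}^k \cup \{G\amalg F_j'\}_{j=1}^{k'}.
\end{align*}
Applying the identity $\ED(H\amalg F)=\sum_{e\in E(H)}[(H-e)\amalg F]+\sum_{e'\in E(F)}[H\amalg(F-e')]$, using $\ED(G)=\ED(G')$ to cancel the ``edge-from-$G$-or-$G'$'' contributions, and using the inductive hypothesis to cancel the remaining difference --- which telescopes to an expression of the form $(G\amalg(\cdot)-G'\amalg(\cdot))(\sum_i\ED(F_i)-\sum_j\ED(F_j'))$ --- one concludes that the total edge decks of $\mathcal{M}$ and $\mathcal{M}'$ coincide.

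The main obstacle is verifying that $\mathcal{M}$ and $\mathcal{M}'$ are distinct as formal sums in $\mathbb{Z}[\mathcal{G}_n]$. In the base case $n=4$ this is immediate: $\mathcal{M}=\{2P_3\amalg 2K_1,\,4K_2\}$ versus $\mathcal{M}'=\{P_3\amalg 2K_2\amalg K_1,\,P_3\amalg 2K_2\amalg K_1\}$ involves three distinct isomorphism classes of graphs distinguishable by connected component structure. In the inductive step, distinctness of $\mathcal{M}$ and $\mathcal{M}'$ must be tracked through the disjoint-union operation; this is ensured by choosing $G$ and $G'$ with very different connected component structures (as $P_3\amalg K_1$ and $2K_2$ are) and by leveraging an additive invariant such as the number of degree-$2$ vertices to rule out accidental coincidences $G\amalg F_i\cong G'\amalg F_j'$. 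Once distinctness is verified at every inductive stage, we obtain counterexamples $\mathcal{M}\neq\mathcal{M}'$ with $\iota_n(\mathcal{M}-\mathcal{M}')=0$ for all $n\geq 2$, completing the disproof.
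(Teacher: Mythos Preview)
Your overall plan---disprove the conjecture by exhibiting nonzero kernel elements of $\iota_n$---is correct and matches the paper's goal, but your method differs substantially from the paper's main argument. The paper (\cref{thm:GERC false}) uses a pure rank argument: by D\'evissage, $K_0(\iota_n)$ factors through $\varepsilon_n\colon \ZZ[\mathcal{G}_n]\to\ZZ[\mathcal{G}_{n-1}]$, and since (for each fixed vertex count) there are strictly more $n$-edge graphs than $(n-1)$-edge graphs, $\varepsilon_n$ cannot be injective. This is a one-line nonconstructive proof. Your inductive swap construction is instead constructive, and is essentially the content of the paper's later remarks on the graded ring structure of $K_0(\Gamma_*)$: your element $\mathcal{M}-\mathcal{M}'$ is exactly the product $([G]-[G'])\cdot\bigl(\sum_i[F_i]-\sum_j[F_j']\bigr)$ in that ring, and the paper uses such products to bound $k_n$.

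There is, however, a genuine gap in your argument at the step you flag as the ``main obstacle'': you do not actually prove $\mathcal{M}\neq\mathcal{M}'$. The appeal to the number of degree-$2$ vertices does not work as stated---nothing prevents $G\amalg F_i\cong G'\amalg F_j'$ for suitably chosen $F_i,F_j'$ with compensating degree-$2$ counts, and you would need to rule out all such coincidences simultaneously across the multisets. The clean fix is to observe that $\ZZ[\mathcal{G}_*]$, with multiplication given by disjoint union, is an integral domain: every graph factors uniquely into connected components, so the underlying monoid is free commutative on the isomorphism classes of connected graphs, and its monoid ring is a polynomial ring over $\ZZ$. Then $\mathcal{M}-\mathcal{M}'=([G]-[G'])\cdot\bigl(\sum_i[F_i]-\sum_j[F_j']\bigr)$ is a product of two nonzero elements, hence nonzero. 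With this in place your inductive argument is complete; without it, the distinctness claim is unproven.
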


In other words, if $\{G_1,\dots, G_k\}$ and $\{G_1',\dots, G_k'\}$ are two collections of $n$-edged graphs so that $\coprod_{i=1}^k\ED(G_i)\cong \coprod_{i=1}^k \ED(G_i')$ as multisets, then $\{G_1,\dots, G_k\}\cong \{G_1,\dots, G_k'\}$ as multisets. This much stronger conjecture is equivalent to injectivity of $K_0(\iota_n)$.

\begin{theorem}\label{thm:inj of i iff GERC}
    The map $K_0(\iota_n)\colon K_0(\Gamma_n)\to K_0(\Gamma_{n,n-1})$ is injective if and only if the generalized edge reconstruction conjecture is true for graphs with $n$ edges.
\end{theorem}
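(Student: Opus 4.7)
The plan is to translate the map $K_0(\iota_n)$ into a completely concrete map between free abelian groups and then match its kernel with the failure of the generalized edge reconstruction conjecture.

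First, I would identify both source and target explicitly. By \cref{rmk:K0 of Gamma k}, the only relations in $K_0(\Gamma_n)$ come from isomorphisms, so $K_0(\Gamma_n)\cong \ZZ[\mathcal G_n]$. By \cref{thm:dev on Gamma}, the inclusion $\Gamma_{n-1}\hookrightarrow \Gamma_{n,n-1}$ induces an isomorphism $K_0(\Gamma_{n-1})\xrightarrow{\cong} K_0(\Gamma_{n,n-1})$, and the former is $\ZZ[\mathcal G_{n-1}]$. Under these identifications, the only nontrivial relation defining $K_0(\Gamma_{n,n-1})$ (beyond isomorphism) is $[G] = \sum_{C\in\ED(G)}[C]$ for each $n$-edged $G$, so $K_0(\iota_n)$ becomes the ``edge-deck'' homomorphism
\[
\Phi\colon \ZZ[\mathcal G_n]\longrightarrow \ZZ[\mathcal G_{n-1}],\qquad [G]\longmapsto \sum_{C\in \ED(G)}[C].
\]

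Next I would prove the two implications by a standard positive/negative-part argument. For $(\Leftarrow)$, assume GERC holds for $n$-edged graphs. Let $x=\sum_i a_i[G_i]\in \ker\Phi$ with the $[G_i]$ pairwise distinct. Write $x = x^+-x^-$ where $x^+=\sum_{a_i>0}a_i[G_i]$ and $x^-=\sum_{a_i<0}(-a_i)[G_i]$ are non-negative combinations with disjoint supports. Interpreting each as a multiset of $n$-edge graphs, the equality $\Phi(x^+)=\Phi(x^-)$ in $\ZZ[\mathcal G_{n-1}]$ says exactly that the disjoint unions of the edge decks of the two multisets agree. Since both multisets have the same total number $kn$ of cards (where $k$ is the sum of coefficients in $x^+$ or $x^-$, forced to be equal), GERC applies and forces the multisets to coincide, so $x^+=x^-$ and $x=0$. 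For $(\Rightarrow)$, assume $\Phi$ is injective and let $\{G_1,\dots,G_k\}$, $\{G'_1,\dots,G'_k\}$ be multisets of $n$-edged graphs with $\coprod_i \ED(G_i)\cong \coprod_i \ED(G'_i)$ as multisets. Then
\[
\Phi\!\left(\sum_i[G_i]-\sum_i[G'_i]\right) = 0,
\]
so by injectivity $\sum_i[G_i]=\sum_i[G'_i]$ in $\ZZ[\mathcal G_n]$, which is exactly the statement that the two multisets of graphs agree.

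The argument is essentially a bookkeeping exercise once the two identifications are in hand; the only substantive content is the reduction of the target to $\ZZ[\mathcal G_{n-1}]$ supplied by \cref{thm:dev on Gamma}, which lets us read off $\Phi([G])$ as an honest formal sum over $\ED(G)$ rather than an equivalence class involving $[G]$ itself. I do not expect a significant obstacle: the only thing to double-check is that the ``$k$'' on both sides of GERC can indeed be taken equal, which is automatic since both edge-deck unions have $kn$ elements and each $n$-edged graph contributes exactly $n$ cards.
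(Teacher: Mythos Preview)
Your proof is correct and follows essentially the same approach as the paper: split an element of the kernel into its positive and negative parts, read off the equality of edge-deck unions, and invoke GERC (the paper phrases this direction contrapositively, but the content is identical). The one cosmetic difference is that you invoke \cref{thm:dev on Gamma} up front to identify the target with $\ZZ[\mathcal G_{n-1}]$ and work with the concrete map $\Phi$, whereas the paper argues directly in $K_0(\Gamma_{n,n-1})$ here and saves that identification for the proof of the next theorem; neither choice affects the logic.
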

\begin{proof}
Suppose $K_0(\iota_n)$ is injective, and consider two subsets $\{G_1,\dots, G_k\}$ and $\{G_1',\dots, G_k'\}$ such that $\coprod_{i=1}^k\ED(G_i)\cong \coprod_{i=1}^k \ED(G_i')$. Then\[
K_0(\iota_n)(\sum_{i=1}^k [G_i] - \sum_{i=1}^k [G_i']) = \sum_{\substack{i=1\\C\in \ED(G_i)}}^k [C] - \sum_{\substack{i=1\\C'\in \ED(G'_i)}}^k [C'] = 0,
    \] and hence $\sum_{i=1}^k [G_i] = \sum_{i=1}^k [G_i']$ as elements of $K_0(\Gamma_n)$.  It follows that $\{G_1,\dots, G_k\}\cong \{G_1,\dots, G_k'\}$ as multisets.

    If $K_0(\iota_n)$ is not injective, then there is some non-zero formal sum $\sum_{i=1}^k [G_i] - \sum_{i=1}^j [G'_i]$ which $K_0(i)$ sends to $0$. Then \[
    0 = K_0(\iota_n)(\sum_{i=1}^k [G_i] - \sum_{i=1}^j [G'_i]) = \sum_{\substack{i=1\\C\in \ED(G_i)}}^k [C] - \sum_{\substack{i=1\\C'\in \ED(G'_i)}}^j [C']
    \] which implies that $k=j$ and the collections $\{G_1,\dots, G_k\}$ and $\{G_1',\dots, G_k'\}$ are a counterexample to the generalized edge reconstruction conjecture.
\end{proof}

Now, using the D\'evissage Theorem (\cref{thm:devissage}), we can show that $K_0(\iota_n)$ is never injective and consequently this stronger version of the edge reconstruction conjecture must be false.

\begin{theorem}\label{thm:GERC false}
    For every $n\geq 2$, the generalized edge reconstruction conjecture is false.
\end{theorem}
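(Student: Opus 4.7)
By \cref{thm:inj of i iff GERC}, it suffices to show that for every $n \geq 2$ the map $K_0(\iota_n)\colon K_0(\Gamma_n) \to K_0(\Gamma_{n,n-1})$ has nontrivial kernel. Under the D\'evissage identification $K_0(\Gamma_{n,n-1}) \cong K_0(\Gamma_{n-1})$ from \cref{thm:dev on Gamma}, this map becomes the edge-deck map $[G] \mapsto \sum_{C \in \ED(G)}[C]$. The strategy is to exhibit nonzero kernel elements uniformly in $n$ by exploiting a multiplicative structure.

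The key structural observation is that disjoint union makes $K_\bullet := \bigoplus_{m \geq 0} K_0(\Gamma_m)$ into a graded commutative ring with $[G] \cdot [H] := [G \sqcup H]$, and the edge-deck maps $K_0(\iota_\bullet)$ assemble into a graded derivation of degree $-1$:
\begin{equation*}
K_0(\iota_{m+n})([G \sqcup H]) = K_0(\iota_m)([G]) \cdot [H] + [G] \cdot K_0(\iota_n)([H]).
\end{equation*}
This holds because every edge of $G \sqcup H$ belongs to exactly one of $G$ or $H$, so the edge deck splits accordingly. In particular, $\bigoplus_m \ker K_0(\iota_m)$ is a graded subring of $K_\bullet$.

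The classical reconstruction counterexamples at $n = 2$ and $n = 3$ (discussed below \cref{ERC}) furnish two explicit kernel elements. Setting $T_A$ equal to two disjoint edges on four vertices and $T_B := P_3 \sqcup *$, we have $\ED(T_A) = \ED(T_B)$, so $x := [T_A] - [T_B] \in \ker K_0(\iota_2)$; similarly, $K_A := K_3 \sqcup *$ and $K_B := K_{1,3}$ have equal edge decks, giving $y := [K_A] - [K_B] \in \ker K_0(\iota_3)$. Since $\{2, 3\}$ generates every integer $n \geq 2$ as a nonnegative combination, for each such $n$ I choose $a, b \geq 0$ with $2a + 3b = n$ and form the candidate kernel element $x^a y^b \in \ker K_0(\iota_n)$.

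To check that $x^a y^b \neq 0$ in $K_0(\Gamma_n) = \ZZ[\mathcal G_n]$, I expand via the binomial theorem: the result is a $\ZZ$-linear combination of iso classes of graphs $T_A^{\sqcup i} \sqcup T_B^{\sqcup (a-i)} \sqcup K_A^{\sqcup j} \sqcup K_B^{\sqcup (b-j)}$, each with nonzero coefficient $\pm \binom{a}{i}\binom{b}{j}$. These iso classes are pairwise distinct because they are determined by their multisets of connected components, so $x^a y^b$ is a nontrivial element of the free abelian group $\ZZ[\mathcal G_n]$. The main obstacle in the argument is the careful verification of the Leibniz rule, but this reduces to a direct partition of the edges of $G \sqcup H$; everything else is routine bookkeeping once the derivation structure is in place.
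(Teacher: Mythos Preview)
Your argument is correct but takes a different route from the paper's. The paper's proof is a one-line rank count: after the D\'evissage identification, $K_0(\iota_n)$ becomes a homomorphism $\varepsilon_n\colon \ZZ[\mathcal G_n]\to \ZZ[\mathcal G_{n-1}]$ between free abelian groups, and since $|\mathcal G_n|>|\mathcal G_{n-1}|$ this map cannot be injective. No explicit kernel element is produced in the proof itself.

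Your approach is constructive: you set up the graded ring structure on $\bigoplus_m K_0(\Gamma_m)$, verify that the edge-deck map is a derivation, and then propagate the classical $n=2,3$ counterexamples via products $x^ay^b$ to all $n\geq 2$. The paper in fact develops exactly this multiplicative structure \emph{after} the theorem (to bound the numbers $k_n$), so you have effectively inverted the order of presentation, using the derivation/Leibniz machinery as the proof rather than as a subsequent refinement. What the paper's argument buys is brevity and independence from any specific counterexamples; what yours buys is explicit kernel elements and, implicitly, the bound $k_n\leq 2^{\lfloor n/3\rfloor}$ that the paper derives separately. Your check that the terms $T_A^{\sqcup i}\sqcup T_B^{\sqcup(a-i)}\sqcup K_A^{\sqcup j}\sqcup K_B^{\sqcup(b-j)}$ are pairwise non-isomorphic (by counting $K_2$- and $K_3$-components) is correct and is the only place requiring care.
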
\begin{proof}
    The usual edge reconstruction conjecture fails for $n=2,3$ and we will prove that $K_0(\iota_n)$ fails to be injective for every $n\geq 4$.
    
    A consequence of \cref{thm:dev on Gamma} is that the map $K_0(\Gamma_{n-1})\to K_0(\Gamma_{n,n-1})$ is an isomorphism, and the inverse is the identity on $(n-1)$-graphs and sends a graph $G$ with $n$ edges to the sum $\sum_{C\in \ED(G)}[C]$. This gives us a commutative diagram:\[
\begin{tikzcd}
    K_0(\Gamma_n) \ar[r, "K_0(\iota_n)"] \ar[rd, dashed, swap, "\varepsilon_n"] & K_0(\Gamma_{n,n-1})\ar[d, "\cong"] \\
    & K_0(\Gamma_{n-1})
\end{tikzcd}
\] so that $K_0(\iota_n)$ is injective if and only if $\varepsilon_n$ is. However, by \cref{thm:K0}, \[
\varepsilon_n\colon \ZZ[\mathcal G_n]\to \ZZ[\mathcal G_{n-1}]
\] where $\mathcal G_n$ is the set of isomorphism classes of graphs with $n$ edges. Since $\abs{\mathcal G_n}>\abs{\mathcal G_{n-1}}$, this map cannot be injective.
\end{proof}

\begin{remark}
    Our work on reframing edge reconstruction in terms of $K$-theory is what led us to this disproof the generalized edge reconstruction conjecture. However, $K$-theory is certainly not needed to do so, and the same result can obtained with elementary methods.
\end{remark}

Nonetheless, we now know that for each $n$ there is always some collection of $n$-edged graphs $\{G_1,\dots, G_k\}$ which is not uniquely reconstructible from the disjoint union of the edge decks, for some $k\geq 1$.
In fact, there are infinitely many such $k$, since if it fails for some $k$ then it fails for everything $\geq k$. This observation ensures that the following definition make sense.

\begin{definition}\label{defn:kn}
    For $n\geq 2$, let $k_n$ denote the minimum $k$ for which generalized edge-reconstruction for $n$-edge graphs fails.
\end{definition}

For instance, we know that $k_2=1$ and $k_3=1$. The original edge reconstruction conjecture can be rephrased as saying that $k_n>1$ for all $n\geq 4$. This definition introduces new avenues for exploration, by trying to bound $k_n$. 

\begin{remark}
    The map $\varepsilon_n$ is given by taking the ``edge deck'' of an arbitrary element $x\in K_0(\Gamma_n)$ given by the formal sum of the edge decks. That is, if $x=\sum_{i\in I}[G_i] - \sum_{j\in J} [G_j]$, then define \[\ED(x) = \sum_{i\in I}\sum_{C\in \ED(G_i)}[C] - \sum_{j\in J} \sum_{D\in \ED(G_j)}[D] .\]
    Then, by construction, $x\in \ker \varepsilon_n$ if and only if $\ED(x)=0$. 
\end{remark}

Observe that we can give\[
K_0(\Gamma_*) = \bigoplus_{n\geq 0} K_0(\Gamma_n)
\] the structure of a graded ring, where addition is the addition of formal sums within each $K_0(\Gamma_n)$ and multiplication is given by disjoint union, $[G]\cdot [G'] = [G\amalg G']$, which is commutative. The unit is the empty graph.

\begin{proposition}
    The kernels assemble into a graded subring, \[
    \ker(\varepsilon_*) = \bigoplus_{n\geq 0} \ker(\varepsilon_n)\subseteq K_0(\Gamma_*).
    \]
\end{proposition}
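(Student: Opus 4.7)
The plan is to upgrade $\varepsilon_*$ from a sequence of group homomorphisms to a graded derivation on $K_0(\Gamma_*)$; once we have this, the fact that $\ker(\varepsilon_*)$ is a graded subring follows immediately from the Leibniz rule.

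First I would assemble the maps $\varepsilon_n \colon K_0(\Gamma_n)\to K_0(\Gamma_{n-1})$ into a single $\ZZ$-linear map $\varepsilon\colon K_0(\Gamma_*)\to K_0(\Gamma_*)$ of degree $-1$ (with the convention $\varepsilon_0 = 0$ on the empty graph). Since $\ker(\varepsilon_*) = \bigoplus_n \ker(\varepsilon_n)$ is already a graded subgroup, the only nontrivial requirements for being a subring are closure under the disjoint-union product and containment of the unit $[\varnothing]$. The unit lies in $K_0(\Gamma_0) = \ker(\varepsilon_0)$ by convention.

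The key step is to verify the Leibniz rule
\[
\varepsilon(xy) = \varepsilon(x)\,y + x\,\varepsilon(y)
\]
for all homogeneous $x,y\in K_0(\Gamma_*)$. By bilinearity, it suffices to check it on generators $[G],[H]$. The observation making this work is that the edges of $G\amalg H$ are exactly the disjoint union of the edges of $G$ with those of $H$, with no new edges appearing between components. Consequently, deleting an edge of $G\amalg H$ either deletes an edge of $G$ (leaving $(G-e)\amalg H$) or deletes an edge of $H$ (leaving $G\amalg (H-f)$), giving
\[
\ED(G\amalg H) \;=\; \{(G-e)\amalg H : e\in E(G)\} \;\sqcup\; \{G\amalg (H-f) : f\in E(H)\}
\]
as multisets. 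Passing to $K_0$, this is precisely $\varepsilon([G])[H] + [G]\varepsilon([H])$.

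Given the Leibniz rule, closure of $\ker(\varepsilon_*)$ under multiplication is immediate: if $\varepsilon(x)=0$ and $\varepsilon(y)=0$, then $\varepsilon(xy)=0$, and this multiplication respects the grading since $\varepsilon$ has degree $-1$ and products of homogeneous elements in $K_0(\Gamma_m)\otimes K_0(\Gamma_n)$ land in $K_0(\Gamma_{m+n})$. I do not anticipate any real obstacle; the only subtlety is bookkeeping the boundary cases for small $n$, which the convention $\varepsilon_0 = 0$ handles.
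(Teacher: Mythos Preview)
Your proposal is correct and follows essentially the same route as the paper: both arguments rest on the edge-deck formula for a disjoint union, $\ED(G\amalg H)=\{(G-e)\amalg H\}\sqcup\{G\amalg(H-f)\}$, extended by bilinearity to give $\varepsilon(xy)=\varepsilon(x)y+x\varepsilon(y)$, from which closure of the kernel under products is immediate. Your packaging of this as the statement that $\varepsilon$ is a graded derivation is a clean way to say what the paper does in coordinates.
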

\begin{proof}
It suffices to show that $\ker(\varepsilon_n)\cdot \ker(\varepsilon_m)\subseteq \ker(\varepsilon_{n+m})$. Note that \[
\ED(G\amalg G') = \sum_{C\in \ED(G)} [C\amalg G'] + \sum_{C'\in \ED(G')} [G\amalg C'],
\] which implies
\[
\iota_{n+m}([G]\cdot [G']) = \left([G']\cdot i_n([G])\right) + \left([G]\cdot i_m([G'])\right). 
\] This observation can be generalized to all elements of $K_0(\Gamma_*)$, which implies that if $x\in \ker \varepsilon_n$ and $y\in \ker \varepsilon_m$, then $x\cdot y\in \ker \varepsilon_{n+m}$.
\end{proof}

We can use this proposition to generate an upper bound on $k_n$. 

\begin{corollary}
There is an upper bound $k_{n+m}\leq 2k_n\cdot k_m$.
\end{corollary}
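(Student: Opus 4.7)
The plan is to combine minimal counterexamples at degrees $n$ and $m$ via the ring multiplication on $K_0(\Gamma_*)$. By \cref{thm:inj of i iff GERC} and the definition of $k_n$, there exist distinct multisets $\{G_1,\dots, G_{k_n}\}$ and $\{G'_1,\dots, G'_{k_n}\}$ of $n$-edged graphs with the same union of edge-decks; set $x = \sum_i [G_i] - \sum_i [G'_i] \in \ker(\varepsilon_n) \setminus \{0\}$. Choose an analogous non-zero $y = \sum_j [H_j] - \sum_j [H'_j] \in \ker(\varepsilon_m)$ arising from a size-$k_m$ counterexample. By the preceding proposition $x\cdot y \in \ker(\varepsilon_{n+m})$, and expanding yields
\[
x\cdot y = \sum_{i,j}\bigl([G_i \amalg H_j] + [G'_i \amalg H'_j]\bigr) - \sum_{i,j}\bigl([G_i \amalg H'_j] + [G'_i \amalg H_j]\bigr),
\]
whose positive and negative parts are each multisets of $2k_n k_m$ graphs on $n+m$ edges.

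The main obstacle is showing that $x\cdot y$ is non-zero, since otherwise the two multisets above coincide and no counterexample is produced. I would handle this by observing that $K_0(\Gamma_*) \cong \ZZ[\mathcal G_*]$ as a graded ring under disjoint union. Because every finite graph admits a unique decomposition into connected components, $\mathcal G_*$ under disjoint union is the free commutative monoid on the set of isomorphism classes of non-empty connected graphs. Therefore $K_0(\Gamma_*)$ is isomorphic to a polynomial ring $\ZZ[x_C : C \text{ connected}]$, which is an integral domain, so $x\neq 0$ and $y\neq 0$ force $x\cdot y \neq 0$.

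With $x\cdot y$ a non-zero element of $\ker(\varepsilon_{n+m})$ whose positive and negative parts are multisets of size at most $2k_n k_m$ (possibly smaller after any cancellation between the two parts), this supplies two distinct collections of $(n+m)$-edged graphs of size at most $2k_n k_m$ with the same union of edge-decks. Hence $k_{n+m}\leq 2 k_n\cdot k_m$.
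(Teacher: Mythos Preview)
Your argument is correct and follows exactly the route the paper intends: the corollary is stated immediately after the proposition that $\ker(\varepsilon_*)$ is a graded subring, and the paper gives no further proof, so the implicit argument is precisely to multiply minimal kernel elements and count terms.

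You go beyond the paper in one important respect: you explicitly verify that $x\cdot y\neq 0$ by observing that $K_0(\Gamma_*)$ is a polynomial ring (via unique decomposition into connected components) and hence an integral domain. The paper never addresses this point; in the subsequent remark it simply expands specific products by hand and visibly obtains non-zero elements, but for the general inequality $k_{n+m}\leq 2k_n k_m$ one genuinely needs to rule out zero-divisors. Your integral-domain observation is the clean way to do this and fills a gap the paper leaves open.
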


Since we may take $k_2=1=k_3$, we get that $k_{n+1}\leq 2k_n$ and $k_{n+2}\leq 2k_n$. In particular, this gives $k_4=k_5=k_6=2$, and more generally we get an upper bound $k_n\leq 2^{\lfloor n/3\rfloor }$. We expect that this upper bound can be vastly improved.

\begin{remark}
    We can construct explicit examples that exhibit the upper bound $k_n\leq 2^{\lfloor n/3\rfloor}$. Recall that $[\grII]-[\grL]$ is in the kernel of $\varepsilon_2$ and $[\grClaw]-[\grTri]$ is in the kernel of $\varepsilon_3$. We can use these to generate elements of $\ker \varepsilon_n$ for any other $n$. For instance, for $n=4$, \[
   \left([\grII]-[\grL]\right)^2 = [\grII \grII] + [\grL \grL] - 2[\grII \grL]
    \] is in $\ker(\varepsilon_4)$. For $n=5$, \begin{align*}
        \left([\grII]-[\grL]\right)&\cdot\left([\grClaw]-[\grTri] \right)\\
        &=[\grII \grClaw]+[\grL \grTri] -[\grL \grClaw] - [\grII \grTri]
    \end{align*}
    is in $\ker(\varepsilon_5)$.
    For $n=6$,\[
    \left([\grClaw]-[\grTri] \right)^2= [\grClaw\grClaw] +[\grTri \grTri] -2[\grClaw \grTri]
    \] is in $\ker(\varepsilon_6)$. More generally, write $n=2q+r$ for some $q\geq 1$ and $r=0,1$. If $r=0$, then consider $\left([\grII]-[\grL]\right)^q$ and if $r=1$ then consider $\left([\grII]-[\grL]\right)^{q-1}\cdot\left([\grClaw]-[\grTri]\right)$.
\end{remark}

%-------------------------%-------------------------%-------------------------%-------------------------%-------------------------%-------------------------%-------------------------%-------------------------
\section{Future directions}\label{sec:future}

In this section, we summarize some possible avenues for future research. 

As mentioned in the introduction, one could try to construct spectral lifts of known reconstruction invariants, such as the Euler characteristic. While \cref{K0 factors} gives some indication of the relationship between $K_0$ and Abelian reconstruction invariants, more systematic study could shed light on the underlying algebraic structure.

While we were unable to prove the edge reconstruction conjecture through the tools of $K$-theory, as mentioned at the end of \cref{subsec:cov fam on FG}, further study of $\bndry\colon K_1((\Gamma_{n,n-1}/\iota_n)_\bullet)\to K_0(\Gamma_n)$ could determine the kernel of $K_0(\iota_n)$, and consequently resolve the edge reconstruction conjecture. In \cite{zakharevich:16a}, specific relations for elements of $K_1$ for assembler $K$-theory are given. If the $K$-theory of covers is amenable to similar techniques, perhaps the kernel could be fully determined.  

Another direction would be organized study the $k_n$ bounds from \cref{defn:kn}. While these bounds would not be able to resolve the edge reconstruction conjecture, it would be a serious advance in our understanding of the structure of edge decks in relation to multisets of graphs. 

Another potential research direction, unrelated to the reconstruction conjecture, is to explore other category with covering family structures on $\bcat{FinGraph}$ and study the resulting $K$-theory. Although \cref{fingraph spectrum} computes the $K$-theory spectrum for one very natural notion of cover, we expect that there are other notions of cover that will yield more interesting $K$-theories. It is also possible that other combinatorial $K$-theory constructions could be applied to graphs, such as the \textit{squares categories} of \cite{CKMZ:squares}, or other kinds of graphs could be studied with $K$-theory, such as digraphs \cite{CDKOSW}.

Finally, in \cref{sec:abstr recon}, we recast general reconstruction problems as the theory of essentially injective functors. While the bare-bones treatment here is sufficient to capture much of our work on edge reconstruction, we have hardly scratched the surface of general reconstruction problems. First, finding other reconstruction problems that are amenable to the tools of $K$-theory would be valuable for pushing the boundaries of applications. Second, there may be other general examples beyond atomic reconstruction settings. Third, thinking of general reconstruction problems through the lens of computation draws natural connections to computability theory. For example, the lattice structure of reconstruction degrees could be investigated in a parallel manner to the study of Turing degrees. Last, other category theoretic tools could be brought in for general reconstruction theorems. 
%-------------------------%-------------------------%-------------------------%-------------------------%-------------------------%-------------------------%-------------------------%-------------------------
\appendix
\section{Abstract Reconstruction}\label{sec:abstr recon}
A reconstruction problem asks when an object $X$ can be reconstructed (up to some notion of isomorphism) from some collection of partial data $\{X_i\}_{i\in I}$. Examples of reconstruction problems can be found in a wide variety of mathematical fields, including graph theory, algebra, manifold theory, and signal processing.
\begin{itemize}
    \item The edge reconstruction conjecture: Is every graph determined (up to isomorphism) by its edge deck? \cite{harary:1964}
    \item Neukirch–Uchida-Pop theorem: Can an infinite field that is finitely generated over prime fields be reconstructed up to isomorphism from its absolute Galois group? \cite{Pop90} 
    \item Smooth diffeomorphism groups: Can a smooth manifold be reconstructed up to diffeomorphism from its group of $p$-times differentiable diffeomorphisms? \cite{Fil82}
    \item Nyquist sampling theorem: Can a continuous time signal be accurately reconstructed from discrete samples? \cite{Whi15}
\end{itemize}
In this appendix, we give an abstract framework for reconstruction problems. After laying some categorical groundwork in \cref{app sec:gen recon}, we discuss recognizable properties in \cref{app sec:recon props}. Then in \cref{app subsec:recon and CatFam}, we connect this abstract reconstruction framework to the $K$-theory of categories with covering families as discussed in \cref{sec:KT}. We end with a general class of reconstruction problems that can be viewed K-theoretically (including the edge reconstruction conjecture from \cref{sec:graphs}) in \cref{app sec: atomic}.

\subsection{General reconstruction problems}\label{app sec:gen recon}
 Recall that a functor $F\colon \cat C \rightarrow \cat D$ is \emph{essentially injective} if for all objects $C,C' \in \cat C$, if $F(C) \cong F(C')$ then $C \cong C'$.

\begin{definition}
    A \emph{reconstruction problem} is a functor $D\colon \cat C \rightarrow \cat D$ where $\cat D$ is the \emph{data category} and $D(C)$ is the \emph{data} of the object $C$. We say that $\cat C$ is \emph{$D$-reconstructable} if $D$ is essentially injective.
\end{definition}

We think of $D$ as sending an object of $\cat C$ to some data $D(C)$ in $\cat D$ which makes up the ``partial information'' used to try to reconstruct $C$. We will use the notation $C\cong_D C'$ to indicate that $D(C)\cong D(C')$ and say that $C$ and $C'$ are \textit{$D$-isomorphic}. In particular, $\cat C$ is $D$-reconstructable precisely when $C \cong_D C'$ if and only if $C\cong C'$. Let $i \colon \cat C\to \cat C_0$ be the functor that sends an object to its isomorphism class. 

\begin{definition}
    Let $D: \cat C \rightarrow \cat D$ be a reconstruction problem. A \emph{$D$-reconstruction} is a retract $r: D_0 \rightarrow \cat C_0$ of the induced map $D_0\colon \cat C_0\to \cat D_0$ on isomorphism classes.
\end{definition}

A $D$-reconstruction can be thought of as a choice of reconstruction for every data object $D(C)$. Observing that $D_0$ is injective if and only if $D$ is essentially injective, we obtain the following.

\begin{proposition}
    Let $D\colon \cat C \rightarrow \cat D$ be a reconstruction problem. Then $\cat C$ is $D$-reconstructable if and only if there is a $D$-reconstruction.
\end{proposition}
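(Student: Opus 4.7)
The plan is to reduce the proposition to the elementary set-theoretic fact that a function between sets admits a left inverse (retract) if and only if it is injective, applied to the induced map $D_0 \colon \cat C_0 \to \cat D_0$ on isomorphism classes. The key bridge is the observation already noted in the paragraph preceding the proposition: $D$ is essentially injective if and only if $D_0$ is injective as a map of sets. Once this is in hand, the statement becomes a standard fact.

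For the backward direction ($\Leftarrow$), suppose $r \colon \cat D_0 \to \cat C_0$ is a retract of $D_0$, so that $r \circ D_0 = \id_{\cat C_0}$. To show $D$ is essentially injective, take any objects $C, C' \in \cat C$ with $D(C) \cong D(C')$ in $\cat D$, so that $D_0([C]) = D_0([C'])$ in $\cat D_0$. Applying $r$ to both sides gives $[C] = r(D_0([C])) = r(D_0([C'])) = [C']$, so $C \cong C'$ in $\cat C$, as desired.

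For the forward direction ($\Rightarrow$), suppose $\cat C$ is $D$-reconstructable, so $D_0$ is injective. If $\cat C_0$ is empty, then $D_0$ is the empty function; in the nontrivial case $\cat C_0$ is nonempty, so we may pick a basepoint $[C_*] \in \cat C_0$. Define $r \colon \cat D_0 \to \cat C_0$ by
\[
    r([X]) \;=\; \begin{cases} [C] & \text{if } [X] = D_0([C]) \text{ for some (necessarily unique) } [C]\in\cat C_0,\\ [C_*] & \text{otherwise.}\end{cases}
\]
Injectivity of $D_0$ ensures the first branch is well-defined, and by construction $r \circ D_0 = \id_{\cat C_0}$, so $r$ is a $D$-reconstruction. (One may invoke a choice principle to select preimages if preferred, but the defining property only requires agreement on the image of $D_0$.)

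There is essentially no obstacle here; the work was done in setting up the definitions so that the proposition reduces to the retract-versus-injection dichotomy for set maps. The only mild subtlety is handling the degenerate case where $\cat C_0$ is empty but $\cat D_0$ is not, in which case no map $\cat D_0 \to \cat C_0$ exists; this case can be excluded at the outset or sidestepped by noting that $D_0$ being injective with empty domain is compatible with the convention that the empty function is trivially its own retract when $\cat D_0$ is also empty.
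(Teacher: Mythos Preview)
Your approach is essentially the same as the paper's: the paper simply records the observation that $D_0$ is injective if and only if $D$ is essentially injective and declares the proposition follows, while you have spelled out the retract-versus-injection argument in detail. You have also correctly flagged the degenerate case $\cat C_0 = \varnothing$, $\cat D_0 \neq \varnothing$, which the paper does not mention; strictly speaking the proposition fails there, but this is a harmless edge case.
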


If we view $\cat D_0$ and $\cat C_0$ as discrete categories, then $\cat C$ is $D$-reconstructable if and only if $\cat C_0$ is $D_0$-reconstructable. 
In particular, if $D\colon \cat C \rightarrow \cat D$ is a fully faithful reconstruction problem, then $\cat C$ is $D$-reconstructable.
    
Initially, this framework might seem restrictive due to the requirement that the reconstruction problem must come from a functor $D$. Many reconstruction problems we are aware of, e.g.
Nyquist sampling (\cref{app ex:ny samp}), are described as set-maps.
However, it is possible to fit these seemingly non-categorical reconstruction problems into our framework, using groupoids.

\begin{example}
    Let $\cat S$ and $\cat T$ be sets equipped with equivalence relations $\sim_\cat S$, $\sim_\cat T$ (often equality). A \emph{classical reconstruction problem} is a set-function $D: \cat S \rightarrow \cat T$ such that if $a \sim_\cat S b$ in $\cat S$, then $D(a) \sim_\cat T D(b)$. The image $D(a)$ is the \emph{data of $a$}. We say that the set $\cat S$ is \emph{$D$-reconstructible} if $D(a) \sim_\cat T D(b)$ implies $ a \sim_\cat S b$.

    Now let $\tilde{\cat S}$ be the groupoid with objects $\cat S$ and a unique isomorphism $x\to y$ if $x\sim_{\cat S} y$. Similarly define $\tilde{\cat T}$. Then there is a unique functor $D': \tilde{\cat S} \rightarrow \tilde{\cat T}$ that agrees with $D$ on objects, and $D'$ is essentially injective if and only if $D'(a) \sim_\cat T D'(b) \implies a \sim_\cat S b$, as desired.
\end{example}

This translation from classical reconstruction problems to categorical reconstruction problems shows that our framework is sufficiently general. It also demonstrates that $D$-reconstructability only depends on $D^{\cong}\colon \cat C^{\cong}\to \cat D^{\cong}$, the functor induced on maximal subgroupoids. 

\begin{example}[Yoneda embedding]\label{app ex: yoneda}
    Let $\cat C$ be a small category, and $PSh(\cat C)$ denote the category of presheaves on $\cat C$.
    The Yoneda embedding $y\colon \cat C \rightarrow PSh(\cat C)$ defines a reconstruction problem, and to say that $\cat C$ is $y$-reconstructable is to say that if two objects represent the same presheaf on $\cat C$, then those objects themselves are isomorphic in $\cat C$. This is the Yoneda lemma.
\end{example}

\begin{example}[Nyquist sampling]\label{app ex:ny samp}
    Let $f\colon \RR \rightarrow \RR$ be a Lebesgue integrable function, and consider the \emph{uniform sampling} of $f$ with sample rate $S>0$
    $$\text{Samp}_S(f) := \{f(n/S) \: | \: n \in \ZZ\}.$$
    When is $f$ determined almost everywhere by its uniform sampling? 
    We may phrase this question as a reconstruction problem in the following way. Let $L^1(\RR) $ denote the set of integrable functions $f\colon \RR \to \RR$, and give $L^1(\RR)$ the structure of a groupoid by assigning a unique isomorphism between $f$ and $g$ exactly when $f=g$ almost everywhere. Uniform sampling with sample rate $S$ defines a functor
    $\text{Samp}_S \colon L^1(\RR) \rightarrow \RR^\ZZ,$
    where $\RR^\ZZ$ is viewed as a discrete category. 
    
    Then $L^1(\RR)$ is $\text{Samp}_S$-reconstructable if and only if every $f \in L^1(\RR)$ can be determined almost everywhere from its uniform sampling with sample rate $S$. The Nyquist sampling theorem \cite{Whi15} says that reconstructability holds on a smaller subgroupoid of $L^1(\RR)$ (the \textit{$B$-band limited functions}) for particular choices of $S$ (specifically $S>2B$).
\end{example}

\begin{example}[Smooth diffeomorphism groups]\label{app ex: diffeo}
    For $p \geq 1$, let $\mathbf{Sm}_p$ denote the groupoid of smooth manifolds and $C^p$-diffeomorphisms between them. There is a functor $\text{Diff}_p \colon \mathbf{Sm}_p \rightarrow \mathbf{Grp}$ that sends a manifold to its group of $C^p$-automorphisms and acts on morphisms by conjugation. Can a smooth manifold $M$ be reconstructed up to $C^p$-diffeomorphism from $\text{Diff}_p(M)$? In \cite{Fil82}, Filipkiewicz shows the answer is yes, which is to say that $\mathbf{Sm}_p$ is $\text{Diff}_p$-reconstructable.   
\end{example}

\subsection{Recognizable properties}\label{app sec:recon props}
We can form a category of reconstruction problems by considering the arrow category $\mathbf{Recon} := \vec{\mathbf{Cat}}$, where $\mathbf{Cat}$ is the category of small categories. 
An object of $\mathbf{Recon}$ is thus a functor $F\colon \cat C \rightarrow \cat D$ and a morphism is a commuting square. We may also look at a category of reconstruction problems on a specific (small) category $\cat C$. 

\begin{definition}
     The \textit{category of reconstruction problems on $\cat C$} is the slice category \[\mathbf{Recon}_{\cat C}:=~_{{1_{\cat C}}\setminus }\mathbf{Recon}.\] That is, objects of $\mathbf{Recon}_{\cat C}$ are functors with domain $\cat C$ and a morphism from $F\colon \cat C\to \cat D$ to $F'\colon \cat C\to \cat D'$ is a functor $G\colon \cat D\to \cat D'$ that makes the relevant triangle commute.
\end{definition}

For a reconstruction problem $D\colon \cat C \rightarrow \cat D$, there may be certain properties of an object $C$ that can be determined by the data $D(C)$. We call such a property a \emph{$D$-recognizable property}. 

\begin{definition}
    Let $D\colon \cat C \rightarrow \cat D$ be a reconstruction problem, and let $P\colon \cat C \rightarrow \cat E$ be a functor. Then $P$ is \emph{$D$-recognizable} if there is a map $Q_0\colon \cat D_0 \rightarrow \cat E_0$ such that $P_0 = Q_0 \circ D_0$. If $\cat C'\subseteq \cat C$ is a subcategory, then $P$ is \textit{$D$-recognizable on $\cat C'$} if $P|_{\cat C'}$ is $D|_{\cat C'}$-recognizable.
\end{definition}

For edge reconstruction of graphs, some examples to keep in mind are the number of edges, chromatic number, the set of edge-deleted subgraphs, and the set of all subgraphs.  
Another interesting example comes from binary classification (e.g. ``is this graph a tree?'') which can be encoded in the following way.

\begin{definition}
    Let $D\colon \cat C \rightarrow \cat D$ be a reconstruction problem. Let $\mathbf{bool}$ denote the discrete category with objects $\{\top, \bot\}$. A \emph{binary $D$-recognizable property} is a $D$-recognizable property $P\colon \cat C \rightarrow \mathbf{bool}$.
\end{definition}

\begin{example}
    Let $\cat C$ be a small category and consider the reconstruction problem $\cat C/(-)\colon \cat C \rightarrow \mathbf{Cat}$ that sends an object $X$ to the slice category $\cat C/X$. In general, $\cat C$ is not $\cat C/(-)$-reconstructable, e.g. if $\cat C$ is discrete. 

    Consider the property $\text{is-terminal}\colon \cat C \rightarrow \mathbf{bool}$ that sends an object $X$ to $\top$ if and only if $X$ is terminal in $\cat C$. Since terminal objects are unique up to isomorphism, $\text{is-terminal}$ is $\cat C/(-)$-recognizable, meaning the structure of the slice category $\cat C/X$ tells us whether or not $X$ is terminal. 
\end{example}

\begin{example}
    Building off of \cref{app ex: diffeo}, let $\overline{\NN}$ denote the discrete category with objects $\mathbb{N} \cup \{\infty\}$. There is a dimension functor $\dim\colon \mathbf{Sm}_p \rightarrow \overline{\NN}$ that takes a manifold to its dimension. This property is ${\rm Diff}_p$-recognizable, meaning the dimension of a smooth manifold can be deduced from its group of $p$-fold differentiable automorphisms. This is not surprising, since $\mathbf{Sm}_p$ is $\text{Diff}_p$-reconstructable to begin with.
\end{example}

\begin{proposition}
    Let $D\colon \cat C \rightarrow \cat D$ be a reconstruction problem, and $P\colon \cat C \rightarrow \cat E$ be a property. If $\cat C$ is $D$-reconstructable, then $P$ is $D$-recognizable.
\end{proposition}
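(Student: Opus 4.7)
The plan is to unpack the definitions and exhibit $Q_0$ directly. Recall that essentially injective means $D$ is injective on isomorphism classes, i.e.\ the induced function $D_0\colon \cat C_0 \to \cat D_0$ between (discrete) sets of isomorphism classes is injective. A $D$-recognizable property is, by definition, a $P$ such that $P_0$ factors through $D_0$, so what we need is simply a set-function $Q_0\colon \cat D_0 \to \cat E_0$ with $Q_0 \circ D_0 = P_0$.

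First I would handle the vacuous case: if $\cat C_0 = \varnothing$, then $P_0$ has empty domain and any function $Q_0$ will do (for instance the unique function out of $\varnothing$ restricted appropriately, or any function at all since the required equation is vacuously satisfied). Otherwise, since $P\colon \cat C\to \cat E$ is a functor from a non-empty category, $\cat E$ (hence $\cat E_0$) is non-empty, so we may fix some basepoint $e_0 \in \cat E_0$.

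Next I would define $Q_0$ on the image of $D_0$: given $[d] \in \mathrm{im}(D_0)$, injectivity of $D_0$ gives a unique $[c] \in \cat C_0$ with $D_0([c]) = [d]$, so set $Q_0([d]) := P_0([c])$. For $[d] \notin \mathrm{im}(D_0)$, set $Q_0([d]) := e_0$. Then by construction, for every $[c] \in \cat C_0$ we have
\[
Q_0(D_0([c])) = P_0([c]),
\]
which is exactly the equality $P_0 = Q_0 \circ D_0$ required by the definition of $D$-recognizability.

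There is no real obstacle here beyond carefully managing the case where the image of $D_0$ is a proper subset of $\cat D_0$; essential injectivity is exactly the hypothesis that makes the assignment on the image well-defined, and the axiom of choice (or just the existence of any object in $\cat E$) handles the extension. One might alternatively phrase this more slickly by choosing any retraction of $D_0$ onto $\cat C_0$ (viewed as a subset via a section of the quotient), which is exactly a $D$-reconstruction in the sense of the earlier definition, and then taking $Q_0 = P_0 \circ r$; this makes the statement a direct corollary of the fact that $D$-reconstructability is equivalent to the existence of a $D$-reconstruction.
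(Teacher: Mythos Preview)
Your argument is correct: essential injectivity of $D$ means $D_0$ is injective, and then $P_0$ factors through $D_0$ by defining $Q_0$ on the image via the unique preimage and extending arbitrarily off the image. The alternative phrasing via a $D$-reconstruction $r$ (so $Q_0 = P_0 \circ r$) is equally valid and is in fact the cleanest way to see it.

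The paper states this proposition without proof, presumably regarding it as immediate from the definitions, so there is nothing to compare beyond noting that your construction is exactly the expected one. One very minor quibble: in your vacuous case ($\cat C_0 = \varnothing$), you assert ``any function $Q_0$ will do,'' but if $\cat D_0 \neq \varnothing$ and $\cat E_0 = \varnothing$ there is no such function. This is a degenerate edge case that the paper's statement also ignores, and it disappears as soon as $\cat E$ has at least one object; it is not worth dwelling on.
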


In particular, $D$-recognizability implies if $C\cong_D C'$, then $C \cong_P C'$. Moreover, given a morphism $\phi\colon D\to E$ in $\mathbf{Recon}_{\cat C}$, then $\phi_0=Q$ witnesses that $E$ is $D$-recognizable.
Thus, $D$-recognizable properties are generally more interesting when $\cat C$ is not known to be $D$-reconstructable. 

By definition, a recognizable property $P\colon \cat C\to \cat D'$ is itself a reconstruction problem on $\cat C$. This observation gives us a way to compare different reconstruction problems. Write $D \preceq_\cat{C} D'$ if $D'$ is $D$-recognizable. It is straightforward to check that $\preceq_\cat{C}$ defines a preorder on the objects of $\mathbf{Recon}_\cat C$.

\begin{proposition}
    Suppose $D,D'$ are reconstruction problems on a category $\cat C$. If $\cat C$ is $D'$-reconstructable and $D \preceq_{\cat C} D'$, then $\cat C$ is $D$-reconstructable.
\end{proposition}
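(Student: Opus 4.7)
The plan is to unwind both hypotheses in terms of the induced maps on isomorphism classes and chain them together. Recall that $\cat C$ is $D$-reconstructable precisely when the induced map $D_0\colon \cat C_0\to \cat D_0$ is injective, and the hypothesis $D\preceq_{\cat C} D'$ gives, by definition of recognizability, a map $Q_0\colon \cat D_0\to \cat D'_0$ of sets of isomorphism classes such that $D'_0=Q_0\circ D_0$.

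To verify essential injectivity of $D$, I would take two objects $C,C'\in \cat C$ with $D(C)\cong D(C')$, i.e. $D_0([C])=D_0([C'])$. Applying $Q_0$ to both sides yields
\[
D'_0([C]) = Q_0(D_0([C])) = Q_0(D_0([C'])) = D'_0([C']),
\]
which is precisely the statement that $D'(C)\cong D'(C')$. The $D'$-reconstructability of $\cat C$ then forces $C\cong C'$, completing the argument.

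There is essentially no obstacle: the statement is a one-line diagram chase once the definitions are unfolded, and the entire content is that essential injectivity is preserved under precomposition-by-surjections-of-isomorphism-classes (here reinterpreted as postcomposition with $Q_0$). The only minor care needed is to note that the definition of $D \preceq_{\cat C} D'$ requires the factorization only on the level of isomorphism classes (not as an actual natural transformation of functors), which is exactly what the definition of $D$-recognizable provides and exactly what is needed to deduce $D'$-isomorphism from $D$-isomorphism.
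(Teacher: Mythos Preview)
Your proof is correct and is exactly the natural argument; the paper in fact states this proposition without proof, treating it as immediate from the definitions. Your unwinding of $D\preceq_{\cat C} D'$ as the existence of $Q_0$ with $D'_0=Q_0\circ D_0$ and the subsequent chase is precisely what is intended.
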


In other words, if $D \preceq_{\cat C} D'$, then $D$ is an ``easier" reconstruction problem in the sense that if $\cat C$ is $D'$-reconstructable, then it is also $D$-reconstructable.

\begin{definition}
    We say that $D$ and $D'$ are \emph{recognizably equivalent}, denoted $D \approx_\cat C D'$, if both $D \preceq_\cat C D'$ and $D' \preceq_\cat C D$. Then $\approx_\cat C$ is an equivalence relation on $\ob (\mathbf{Recon}_\cat C)$ and the quotient map $\deg_\cat C\colon \ob (\mathbf{Recon}_\cat C)\to \ob (\mathbf{Recon}_\cat C)/\approx_{\cat C}$ assigns $D$ to its \textit{reconstruction degree} $\deg_{\cat C}(D):=[D]$.  
\end{definition}

It is straightforward to check that the preorder $\preceq_\cat C$ defines a partial order on the reconstruction degrees of $\cat C$. We can even prove a stronger result.

\begin{theorem}\label{deg bounded complete lattice}
     The preorder $\preceq_{\cat C}$ gives $\mathbf{Recon}_\cat C / \approx_{\cat C}$ the structure of a bounded complete lattice. 
\end{theorem}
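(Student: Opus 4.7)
The plan is to identify the quotient poset $\mathbf{Recon}_{\cat C}/{\approx_{\cat C}}$ with the well-known complete lattice $\mathrm{Eq}(\cat C_0)$ of equivalence relations on the set of isomorphism classes $\cat C_0$, ordered by refinement, and then transport the standard lattice structure.

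First, I would unpack $\preceq_{\cat C}$. For any $F\in \mathbf{Recon}_{\cat C}$, write $\sim_F$ for the kernel equivalence relation on $\cat C_0$ defined by $X\sim_F Y$ iff $F_0(X)=F_0(Y)$. Then $D\preceq_{\cat C} D'$ --- meaning $D'_0 = Q_0\circ D_0$ for some set map $Q_0$ --- holds if and only if $\sim_D$ refines $\sim_{D'}$. Consequently $D\approx_{\cat C} D'$ iff $\sim_D=\sim_{D'}$, and the assignment $[D]\mapsto \sim_D$ defines a well-defined, order-preserving injection from $(\mathbf{Recon}_{\cat C}/{\approx_{\cat C}},\preceq_{\cat C})$ into $(\mathrm{Eq}(\cat C_0),\leq_{\mathrm{refine}})$, under the convention that finer equivalence relations are smaller.

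Next, I would show this injection is surjective, hence an order-isomorphism. Given any $\sim\,\in\mathrm{Eq}(\cat C_0)$, construct a category $\cat C[\sim]$ with the same objects as $\cat C$ by formally adjoining an inverse pair of isomorphisms between $X$ and $Y$ for every $X,Y\in \cat C$ with $[X]\sim [Y]$, subject only to the relations making these adjoined pairs inverses. The inclusion $F_\sim\colon \cat C\to \cat C[\sim]$ is a functor whose induced map on iso classes is the canonical surjection $\cat C_0\to \cat C_0/{\sim}$, yielding $\sim_{F_\sim}=\sim$.

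With the order-isomorphism $\mathbf{Recon}_{\cat C}/{\approx_{\cat C}}\cong \mathrm{Eq}(\cat C_0)$ in hand, the conclusion follows from the classical fact that $\mathrm{Eq}(\cat C_0)$ is a bounded complete lattice: meets are intersections of equivalence relations, joins are the equivalence relations generated by unions, the bottom element is equality (realized by $\id_{\cat C}$), and the top is the indiscrete equivalence relation (realized by the terminal functor $\cat C\to \mathbf 1$). The main obstacle will be the surjectivity step: one must verify that $\cat C[\sim]$ introduces no accidental identifications of iso classes beyond those prescribed by $\sim$. Because the only relations imposed are the inverse-pair relations on the adjoined morphisms --- with no compatibility demanded with the original morphisms of $\cat C$ --- an explicit description of morphisms in $\cat C[\sim]$ as equivalence classes of zigzags alternating between $\cat C$-morphisms and formal adjoined isomorphisms shows that two objects are isomorphic in $\cat C[\sim]$ precisely when their iso classes in $\cat C$ are $\sim$-equivalent.
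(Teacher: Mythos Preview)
Your approach is correct and takes a genuinely different route from the paper's. The paper argues directly: it exhibits $[1_{\cat C}]$ and $[!\,]$ as bottom and top, and then constructs binary (and set-indexed) joins and meets explicitly, using the pushout of the span $\cat E_0 \leftarrow \cat C_0 \to \cat D_0$ for joins and the product $\cat D_0 \times \cat E_0$ for meets, each precomposed with the passage to isomorphism classes. You instead prove the stronger statement that $\mathbf{Recon}_{\cat C}/{\approx_{\cat C}}$ is order-isomorphic to the partition lattice $\mathrm{Eq}(\cat C_0)$, and then invoke the classical fact that equivalence relations on a set form a bounded complete lattice. Your route buys more---an explicit identification of the lattice rather than a bare verification of the axioms---at the cost of the surjectivity step. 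That step is correct, but the justification via normal forms in the free product $\cat C[\sim] \simeq \cat C *_{\ob \cat C} \cat F$ (where $\cat F$ is the groupoid freely generated by the adjoined edges) is more delicate than your sketch indicates: one must show that any isomorphism in such a free product has each letter of its reduced alternating word already invertible in its home category, so that isomorphism in $\cat C[\sim]$ is generated by $\cat C$-isomorphisms together with the adjoined $\phi$'s, hence coincides with $\sim$. The paper's direct construction sidesteps this subtlety entirely.
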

\begin{proof}
    Suppose $\cat C$ is non-empty (if $\cat C$ is empty, the result is trivial). The identity functor $1\colon \cat C \rightarrow \cat C$, and the terminal functor $!\colon \cat C \rightarrow \mathbf{1}$, respectively give the bottom and top degrees, $[1_\cat C]$ and $[!]$. For binary least upper bounds, consider the pushout $\cat U$ of the span $\cat E_0 \xleftarrow{E_0} \cat C_0 \xrightarrow{D_0} \cat D_0$. The functor $U\colon \cat C  \rightarrow \cat U$ given by $U := D_*(E)Di = E_*(D)Ei$, where $i\colon \cat C \rightarrow \cat C_0$ is the functor that takes an object $C$ to its isomorphism class in $\cat C_0$ gives a least upper bound. Spans with set-indexed legs give arbitrary least upper bounds. 
    
    Similarly, for binary greatest lower bounds, let $\cat L$ denote the product $\cat D_0 \times \cat E_0$ in discrete categories with projection maps $\pi_{\cat D_0}$, $\pi_{\cat E_0}$ and let $L_0\colon \cat C_0 \rightarrow \cat L$ denote
the map induced by $D_0$ and $E_0$. The fuctor $L\colon \cat C \rightarrow \cat L$ given by the composition $L = L_0i$ defines a greatest lower bound. Set-indexed products give arbitrary greatest lower bounds. 
\end{proof}

%________________________________________________________________________________________

\subsection{Reconstruction problems and covering families}\label{app subsec:recon and CatFam}

In some cases, a reconstruction problem $D\colon \cat C \rightarrow \cat D$ can be realized as a reconstruction problem $\cat C\to W(\cat C)$ for a suitable collection of covering families on $\cat C$. Then $K$-theory provides a tool for analysis of the original problem.

\begin{definition}
    Let $\cat C$ be a category with covering families and $W(\cat C)$ its category of covers. A \emph{covering reconstruction problem} is a reconstruction problem $D\colon \cat{C}^{\cong} \rightarrow W(\cat C)$ that sends an object $C$ to a collection of objects $\{C_i\}_{i \in I}$ that cover $C$, meaning there is a morphism $\{C_i\}_{i\in I}\to \{C\}$ in $W(\cat C)$.
\end{definition}

\begin{definition}
    A category with covering families $\cat C$ \emph{has isomorphism covers} if $\{\phi\colon C' \rightarrow C\}\in W(\cat C)$ for every isomorphism $\phi\colon C' \rightarrow C$ in $\cat C$.
\end{definition}

If $\cat C$ has isomorphism covers, then there is a well-defined map $\iota\colon \cat C_0 \rightarrow K_0(\cat C)$ that sends $[X]$ to itself. If this map is an injection, then there are no two non-isomorphic objects of $\cat C$ with isomorphic covers. Since a covering reconstruction problem picks a cover for every object, this implies every covering reconstruction has a solution.

\begin{theorem}
    Let $\cat C$ is a category with covering families that has isomorphism covers. Suppose the map $\iota: \cat C_0 \rightarrow K_0(\cat C)$ is an injection. 
    Then $\cat C$ is $D$-reconstructable for all covering reconstruction problems $D\colon \cat C^{\cong} \rightarrow W(\cat C)$.
\end{theorem}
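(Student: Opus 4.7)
The plan is to unfold the definitions and use the $K_0$ presentation of Theorem 2.7 to transport the hypothesis $D(C) \cong D(C')$ in $W(\cat C)$ into an equality $[C]=[C']$ in $K_0(\cat C)$, then apply injectivity of $\iota$. So the proof is almost tautological once the right translation is set up; the main work is simply to check that each of the three assumptions (the covering axioms, isomorphism covers, and injectivity of $\iota$) gets used exactly once at the appropriate place.

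First I would take $C,C'\in \cat C^{\cong}$ with $D(C)\cong D(C')$ in $W(\cat C)$ and write $D(C)=\{C_i\}_{i\in I}$ and $D(C')=\{C'_j\}_{j\in J}$. By the definition of a covering reconstruction problem, these come with morphisms $\{C_i\}_{i\in I}\to \{C\}$ and $\{C'_j\}_{j\in J}\to \{C'\}$ in $W(\cat C)$, which by the definition of $W(\cat C)$ (Definition 2.4) are exactly covering families $\{C_i\to C\}_{i\in I}$ and $\{C'_j\to C'\}_{j\in J}$ in $\cat C$. By Theorem 2.7, this gives $[C]=\sum_{i\in I}[C_i]$ and $[C']=\sum_{j\in J}[C'_j]$ in $K_0(\cat C)$.

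Next I would unpack what an isomorphism $D(C)\cong D(C')$ in $W(\cat C)$ provides: a bijection $f\colon I\to J$ together with isomorphisms $\phi_i\colon C_i\xrightarrow{\cong} C'_{f(i)}$ in $\cat C$ for each $i\in I$. Here is the only place the isomorphism covers hypothesis enters: because each singleton $\{\phi_i\colon C_i\to C'_{f(i)}\}$ is itself a covering family, Theorem 2.7 gives $[C'_{f(i)}]=[C_i]$ in $K_0(\cat C)$. Summing over $i$ and reindexing yields
\[
[C]\;=\;\sum_{i\in I}[C_i]\;=\;\sum_{i\in I}[C'_{f(i)}]\;=\;\sum_{j\in J}[C'_j]\;=\;[C']
\]
in $K_0(\cat C)$. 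Then $\iota([C])=\iota([C'])$, and since $\iota$ is assumed injective we conclude $[C]=[C']$ in $\cat C_0$, which is to say $C\cong C'$ in $\cat C$. This shows $D$ is essentially injective, i.e. $\cat C$ is $D$-reconstructable.

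There is no serious obstacle; the only thing to be careful about is that an isomorphism in $W(\cat C)$ does exactly package a bijection of indexing sets together with componentwise isomorphisms, so that the isomorphism covers hypothesis is strong enough to convert it into equalities in $K_0$. Everything else is a direct application of the presentation in Theorem 2.7.
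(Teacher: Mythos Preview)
Your proof is correct and matches the paper's own justification, which appears as a one-sentence sketch immediately before the theorem rather than as a formal proof. One minor remark on bookkeeping: the fact that each singleton $\{\phi_i\colon C_i\to C'_{f(i)}\}$ is a covering family already follows from the definition of a morphism in $W(\cat C)$ once you have established that the underlying set map $f$ is a bijection, so you do not need the isomorphism covers hypothesis there. That hypothesis is instead what guarantees the map $\iota\colon \cat C_0\to K_0(\cat C)$ is well-defined in the first place (so that isomorphic objects have equal classes in $K_0$), which is implicit in the theorem statement. This does not affect the validity of your argument.
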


The converse is not true in general. The issue is that $D$-reconstructability does not allow an object to be decomposed multiple times, while $K_0$ sees these further decompositions.

We may relate the universal property of $K_0$ (\cref{rmk:UP of K0}) to recognizable properties as follows. If $\cat C$ has isomorphism covers, then a covering invariant $F\colon \ob\cat C\to \cat A$ extends to a functor $F\colon \cat C^{\cong}\to \cat A$, where the Abelian group $\cat A$ is viewed as a discrete category.

\begin{proposition}
    Let $\cat C$ be a category with isomorphism covers and let $D\colon \cat C^{\cong} \rightarrow W(\cat C)$ be a covering reconstruction problem. Then every covering invariant $F\colon \cat C^{\cong} \rightarrow \cat A$ is a $D$-recognizable property.
\end{proposition}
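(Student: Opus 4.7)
The plan is to define the witnessing map $Q_0\colon W(\cat C)_0 \to \cat A_0$ by the formula
\[
Q_0\bigl([\{C_i\}_{i\in I}]\bigr) := \sum_{i\in I} F(C_i),
\]
where we view $\cat A$ as a discrete category so that $\cat A_0 = \cat A$. The goal then has two parts: first, show $Q_0$ is well defined on isomorphism classes of objects of $W(\cat C)$; second, verify the factorization $F_0 = Q_0 \circ D_0$.

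For well-definedness, I would first unpack what an isomorphism in $W(\cat C)$ looks like. From \cref{defn:W(C)}, a morphism $\{A_i\}_{i\in I} \to \{B_j\}_{j\in J}$ is a set-map $f\colon I \to J$ together with morphisms $f_i\colon A_i \to B_{f(i)}$, such that over each $j$ the collection $\{f_i\}_{i\in f^{-1}(j)}$ is a covering family. A short check shows that an isomorphism in $W(\cat C)$ forces $f$ to be a bijection and each $f_i$ to be an isomorphism in $\cat C$ (the requirement that singletons $\{f_i\colon A_i \to B_{f(i)}\}$ be covers is supplied precisely by the isomorphism-covers hypothesis on $\cat C$). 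Since $\cat C$ has isomorphism covers, for any isomorphism $\phi\colon C' \to C$ the singleton $\{\phi\}$ is a cover, so the covering invariant condition gives $F(C) = F(C')$; that is, $F$ factors through $\cat C_0$. Thus if $\{C_i\}_{i\in I} \cong \{C'_j\}_{j\in J}$ in $W(\cat C)$, a bijection $\sigma\colon I \to J$ and isomorphisms $C_i \cong C'_{\sigma(i)}$ give $\sum_i F(C_i) = \sum_j F(C'_j)$, so $Q_0$ descends to $W(\cat C)_0$.

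For the factorization, fix $C \in \cat C^{\cong}$ and write $D(C) = \{C_i\}_{i\in I}$, which by definition of a covering reconstruction problem is a cover of $C$ in $W(\cat C)$, i.e.\ $\{C_i \to C\}_{i\in I}$ is a covering family in $\cat C$. By the defining property of a covering invariant (\cref{rmk:UP of K0}),
\[
F(C) = \sum_{i\in I} F(C_i) = Q_0\bigl([\{C_i\}_{i\in I}]\bigr) = Q_0(D_0([C])),
\]
which is the required equality $F_0 = Q_0 \circ D_0$.

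The only real subtlety is the well-definedness step, which is where the assumption that $\cat C$ has isomorphism covers does the work: without it, one cannot conclude that isomorphisms in $W(\cat C)$ are coordinate-wise isomorphisms of the underlying objects of $\cat C$, nor that $F$ itself is isomorphism-invariant. Once that wrinkle is resolved, the verification of recognizability is essentially a restatement of the defining sum formula for a covering invariant.
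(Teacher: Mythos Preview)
Your proof is correct and follows the same strategy as the paper: define the witnessing map by $\{C_i\}_{i\in I}\mapsto \sum_i F(C_i)$ and then read off the factorization from the covering-invariant identity. The paper's version is marginally slicker in that it defines $F'\colon W(\cat C)\to\cat A$ as a functor and checks well-definedness on \emph{all} morphisms of $W(\cat C)$ directly from the covering-invariant condition (for any morphism $\{A_i\}_{i\in I}\to\{B_j\}_{j\in J}$, summing $F(B_j)=\sum_{i\in f^{-1}(j)}F(A_i)$ over $j$ gives $\sum_j F(B_j)=\sum_i F(A_i)$), thereby avoiding your analysis of what isomorphisms in $W(\cat C)$ look like; note in particular that the deduction ``isomorphism in $W(\cat C)$ $\Rightarrow$ bijection on index sets with componentwise isomorphisms'' does not itself require the isomorphism-covers hypothesis, so your parenthetical slightly misattributes where that hypothesis enters.
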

\begin{proof}
Since $F$ is a covering invariant, the map $F'\colon W(\cat C)\to \cat A$ given by $\{C_i\}_{i\in I}\mapsto \sum_{i\in I} F(C_i)$ is well-defined. Hence given any $D\colon \cat C^{\cong}\to W(\cat C)$, we have $F_0 = F'_0\circ D_0$.
\end{proof}

Hence a covering invariant is always $D$-reconstructable. The converse is not true in general because a covering reconstruction problem $D\colon\cat C^{\cong} \rightarrow W(\cat C)$ picks out a single cover for each object of $\cat C$, and so a $D$-recognizable property $P\colon \cat C^{\cong} \rightarrow \cat A$ need only respect this single cover (while a covering invariant must respect all covers). However, in some cases, a version of the converse does hold.

\begin{definition}
    Let $\cat C$ be a category with isomorphism covers and let $D\colon \cat C^{\cong}\to W(\cat C)$ be a covering reconstruction problem. A property $P\colon \cat C^{\cong} \to \cat A$ is \textit{strongly $D$-recognizable} if it factors as\[
    \begin{tikzcd}
        \cat C^{\cong} \ar[r, "D"] \ar[rd, swap, "P"] & W(\cat C) \ar[d, "{P'}"] \\
        & \cat A
    \end{tikzcd}
    \] so that $P'$ is symmetric monoidal.
\end{definition}

Here $\cat A$ is again viewed as a discrete symmetric monoidal category. Recall that the symmetric monoidal structure on $W(\cat C)$ is given by disjoint union, so in particular we must have $P'(\{C\}) = P'(\{C_i\}_{i\in I}) = \sum_{i\in I} P'(\{C_i\})$ whenever $\{C_i\to C\}_{i\in I}$ is a cover.

\begin{proposition}
    If $P$ is strongly $D$-recognizable, then the restriction of $P$ to objects is a covering invariant.
\end{proposition}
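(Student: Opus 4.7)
The plan is to unpack the two pieces of structure separately: the fact that $P'$ must send every $W(\cat{C})$-morphism to an identity (because $\cat A$ is discrete), and the fact that $P'$ turns disjoint unions into sums (because it is symmetric monoidal, with $\cat A$ viewed as discrete monoidal, so its monoidal structure is addition).

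First I would observe that since $\cat A$ is a discrete category, the only morphisms in $\cat A$ are identities. Hence for any morphism $\{X_i\}_{i\in I}\to \{X\}$ in $W(\cat C)$ — which exists precisely when $\{X_i \to X\}_{i\in I}$ is a cover by the definition of $W(\cat C)$ — applying $P'$ gives $P'(\{X_i\}_{i\in I}) = P'(\{X\})$. Next, symmetric monoidality together with the fact that the monoidal structure on $W(\cat C)$ is disjoint union of families gives $P'(\{X_i\}_{i\in I}) = \sum_{i\in I} P'(\{X_i\})$, since any family is the disjoint union of its singletons.

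Now let $\{A_i \to A\}_{i\in I}$ be any cover in $\cat C$. The covering reconstruction problem $D$ picks out \emph{specific} covers $D(A)$ of $A$ and $D(A_i)$ of each $A_i$, which are generally unrelated to the given one. But by the first observation applied to the morphism $D(A) \to \{A\}$ in $W(\cat C)$, we get $P(A) = P'(D(A)) = P'(\{A\})$, and applied to each $D(A_i) \to \{A_i\}$, we get $P(A_i) = P'(\{A_i\})$. Applied once more to $\{A_i\}_{i\in I} \to \{A\}$, we get $P'(\{A\}) = P'(\{A_i\}_{i\in I})$. Combining these with the sum identity yields
\[
P(A) = P'(\{A\}) = P'(\{A_i\}_{i\in I}) = \sum_{i\in I} P'(\{A_i\}) = \sum_{i\in I} P(A_i),
\]
which is exactly the covering invariant condition of \cref{rmk:UP of K0}.

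I do not expect any real obstacle here: the only subtlety is recognizing that the specific cover $D(X)$ chosen by the reconstruction functor is irrelevant, because discreteness of $\cat A$ forces $P'$ to be constant on connected components of $W(\cat C)$, which already collapses the choice of cover. Everything else is a direct consequence of $P'$ being a symmetric monoidal functor into a discrete target.
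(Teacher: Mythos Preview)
Your proposal is correct and follows essentially the same approach as the paper: both arguments use discreteness of $\cat A$ to force $P'(\{C\}) = P'(D(C)) = P(C)$ for every object, and symmetric monoidality of $P'$ to split $P'(\{C_i\}_{i\in I})$ as a sum. Your write-up is slightly more explicit in separately verifying $P(A_i) = P'(\{A_i\})$ for each piece of the cover, whereas the paper handles this by proving $P(C) = P'(\{C\})$ once for all objects $C$, but this is a cosmetic difference.
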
\begin{proof}
    Let $\{C_i\to C\}_{i\in I}$ be a cover of $C$. As noted above, we have\[
    P'(\{C\}) = P'(\{C_i\}_{i\in I}) = \sum_{i\in I} P'(\{C_i\})
    \] since $P'$ is symmetric monoidal. It thus suffices to show that $P(C) = P'(\{C\})$ for all objects $C$. This is true because there is a morphism $D(C)\to \{C\}$ in $W(\cat C)$, and so since $\cat A$ is discrete we must have $P'(\{C\}) = P'D(C) = P(C)$. 
\end{proof}

\begin{remark}\label{K0 factors}
    If $\cat C$ has isomorphism covers, there are always functors\[
    \begin{tikzcd}
         W(\cat C) \ar[d] & \cat C^{\cong} \ar[dl]  \\
        K_0(\cat C)
    \end{tikzcd}
    \] 
    where $K_0(\cat C)$ is viewed as a discrete symmetric monoidal category and the vertical map is given by $\{C_i\}_{i\in I}\mapsto \sum_{i\in I}[C_i]$. Any group homomorphism $K_0(\cat C)\to \cat A$ determines a functor $W(\cat C)\to \cat A$ (by restricting along the vertical map) and moreover a symmetric monoidal functor $W(\cat C)\to \cat A$ uniquely extends to a group homomorphism $K_0(\cat C)\to \cat A$. Hence we should view the previous two propositions as saying that a functor $\cat C^{\cong}\to \cat A$ factors through $K_0(\cat C)$ if and only if it factors through $W(\cat C)$ in an appropriate way.
\end{remark}

\begin{example}[Edge reconstruction]\label{app ex:edge recon}
The primary example of this paper can be viewed in this abstract framework. Recall that $\Gamma := \mathbf{FinGraph}$ is the category of finite simple graphs, with subgraph inclusions as morphisms. The edge reconstruction problem $\Gamma\to \cat D$ has codomain category equal to the category of finite multisets on $\Gamma$, and takes $G\in \Gamma$ to its edge-deck, $\ED(G)$. 
We can realize this as a covering reconstruction problem by giving $\Gamma$ the following covering families: \begin{itemize}
    \item $\{\varnothing = \varnothing\}_{i \in I}$ for every finite (possibly empty) set $I$;
    \item the singleton $\{f: G \xrightarrow{\cong} G'\}$ for every graph isomorphism $f: G \xrightarrow{\cong} G'$ in $\Gamma$;
    \item for each graph $G$ with at least 4 edges, the collection $\{\iota_C \circ \psi_C : C' \rightarrow G\}_{C \in \ED(G)}$, where $\iota_C: C \hookrightarrow G$ is the inclusion of a card and $\psi_C : C' \xrightarrow{\cong} C$ is a graph isomorphism.
\end{itemize}
The functor $\ED'\colon \Gamma\to W(\Gamma)$ given by
$$\ED'(G) := \begin{cases} 
\ED(G) & \quad \text{if $G$ has $\geq 4$ edges} \\
\{G\} & \quad \text{else}\end{cases}$$
encodes edge reconstruction for graphs with at least 4 edges; the edge reconstruction conjecture says that $\Gamma$ is $\ED'$-reconstructable.
However, since each card in the edge deck of a graph can itself be decomposed by an edge deck, $K_0$ is not just encoding graph reconstruction for this collection of covering families.
\end{example}

\begin{example}[Vertex reconstruction]\label{app ex:vtx recon}
    Similarly, this framework also captures vertex reconstruction for graphs, which says that every graph $G$ with at least $3$ vertices can be reconstructed (up to isomorphism) from its vertex deck $\VD(G)$, the mutiset of vertex deleted subgraphs of $G$. Just as in the previous example, we can endow $\Gamma$ with covering families and construct $\VD'\colon \Gamma \to W_v(\Gamma)$ so that $\Gamma$ is $\VD'$-reconstructable if and only if the vertex reconstruction conjecture is true. 
\end{example}

\begin{example}[Subgroup reconstruction]\label{app ex:subgp recon}
    Let $G$ be a finite group and consider the finite multiset ${\rm Sub}(G):=\{H_i \}_{i \in I}$ 
    of isomorphism classes of proper subgroups of $G$. Is $G$ determined (up to isomorphism) by ${\rm Sub}(G)$? To encode this as a reconstruction problem, let $\cat G:= \mathbf{FinGrp}_{\text{inj}}$ be the category of finite groups and injective group homomorphisms. 
    Give $\cat G$ the following covering families:\begin{itemize}
    \item $\{e=e\}_{i \in I}$ for every finite (possibly empty) set $I$, where $e$ is the trivial group;
    \item the singleton $\{f: G \xrightarrow{\cong} G'\}$ for every group isomorphism $f: G \xrightarrow{\cong} G'$;
    \item the collection $\{\iota_H \circ \psi_H : H' \rightarrow G\}_{H \in {\rm Sub}(G)}$, where $\iota_H: H \hookrightarrow G$ is the inclusion of a non-trivial proper subgroup and $\psi_H : H' \xrightarrow{\cong} H$ is a group isomorphism.
\end{itemize}
Then ${\rm Sub}\colon \cat G\to W(\cat G)$ encodes subgroup reconstructability. However, note that $\cat G$ is not ${\rm Sub}$-reconstructable, since ${\rm Sub}(\ZZ/p\ZZ)=\{e\}$ for any prime $p$. We may instead ask if each \emph{acyclic group} $G$ is fully determined (up to isomorphism) by ${\rm Sub}(G)$. We encode this restricted reconstruction problem by again augmenting our functor ${\rm Sub}: \cat G \rightarrow W(\cat G)$. Define ${\rm Sub}': \cat G \rightarrow W(\cat G)$ by
$${\rm Sub}'(G) = \begin{cases} 
    {\rm Sub}(G) & \quad \text{if } G \text{ is acyclic}\\
    \{G\} & \quad \text{if } G \text{ is cyclic}
\end{cases}.$$

This new functor corresponds to subgroup reconstruction for acyclic groups. 
\end{example}

\subsection{A general example: atomic reconstruction}\label{app sec: atomic}
Here we present a general situation in which a reconstruction problem can be converted into a category with covering families, generalizing the previous three examples. Note that if $\cat C$ is a category with covering families, the category with covering families $W(\cat C)$ is just a certain subcategory of $\FMS(\cat C)$, the category of \textit{finite multisets on $\cat C$}. The objects of $\FMS(\cat C)$ are $I$-indexed multisets $\{C_i\}_{i\in I}$ of objects $C_i\in \cat C$, for some finite set $I$, and a morphism $\{C_i\}_{i\in I}\to \{D_j\}_{j\in J}$ is a set map $f\colon I\to J$ along with $\cat C$-morphisms $f_i: C_i \rightarrow C_{f(i)}$.  

Even if a category $\cat C$ does not have covering families, we can think of choosing a cover of an object $C$ as just specifying a morphism $\{C_i\}_{i\in I}\to \{C\}$ in $\FMS(\cat C)$. We can ask these choices to assemble into a functor $F\colon \cat C^{\cong}\to \vec{\FMS}(\cat C)$, where the codomain category is the arrow category of $\FMS(\cat C)$. We say an object $C$ in $\cat C$ is \emph{fixed} if $F(C) = \{C = C\}$ is the singleton identity map.

\begin{definition}
    Let $\cat C$ be a category with an initial object $\varnothing$. A \emph{patching} on $\cat C$ is a functor $P\colon \cat C^{\cong}\to \vec{\FMS}(\cat C)$ such that (i) $\varnothing$ is fixed and (ii) if $C$ is not fixed, then $P(C) = \{C_i \rightarrow C \}_{i \in I}$ satisfies $|I| \geq 2$ and $C_i \not \cong C$ for all $i \in I$.
\end{definition}

\begin{definition}
A patching $P$ on $\cat C$ has a corresponding \emph{decomposition} $D_P\colon \cat C^{\cong}\to \FMS(\cat C)$ given by the composition $D_P := \dom \circ P$. When the patching is clear, we will omit the subscript on $D_P$.
\end{definition}

That is, for an object $C$ in $\cat C$ with patching $P(C) = \{f_i : C_i\to C \}_{i \in I}$, the decomposition of $C$ is the multiset of objects $D(C) := \{C_i \}_{i \in I}$. The decomposition $D(C)$ is either exactly $\{C\}$ (when $C$ is fixed by the patching $P$), or $C$ is decomposed into at least two objects. These objects are non-trivial in the sense that they are not isomorphic to $C$. Intuitively, we think of the decomposition $D$ as either fixing $C$, or breaking $C$ into a multiset of ``smaller" pieces. The patching $P(C)$ tells us how to assemble the pieces $D(C)$ back into $C$. The decomposition $D$ will be the reconstruction problem we are interested in. 

\begin{remark}
It is perhaps illuminating to explicate the patchings and decompositions in \cref{app ex:edge recon}, \cref{app ex:vtx recon}, and \cref{app ex:subgp recon}. For \cref{app ex:edge recon}, the patching assigns a graph (with at least 4 edges) $G$ to $\{\iota_C\colon C\hookrightarrow G\}_{C\in \ED(G)}$, where $\iota_C$ is the inclusion of a card as a specific subgraph; the decomposition is then the edge deck as a multiset. Note that the difference between the patching and the decomposition is whether or not we keep track of the specific inclusions.  We leave the other two examples as exercises for the reader.
\end{remark}

We now outline a general setting for reconstruction problems, wherein ``molecules'' are decomposed into fixed ``atomic'' objects. For the remainder of the section, we fix a small category $\cat C$ with a unique initial object $\varnothing$ such that there are no morphisms $X \rightarrow \varnothing$ for non-initial $X$. 

\begin{definition}\label{rec setting}
     Suppose the objects of $\cat C$ are partitioned as $\ob \cat C = \{ \varnothing \} \amalg \mathcal{A} \amalg \mathcal{M}$; the objects in $\mathcal{A}$ are called \emph{atoms} while the objects in $\mathcal{M}$ are called \emph{molecules}. Let $P\colon \cat C^{\cong} \to \vec{\FMS}(\cat C)$ be a patching on $\cat C$ with corresponding decomposition $D$. The triple $(\mathcal A, \mathcal M, P)$ is an \emph{atomic reconstruction setting} if the following conditions hold:
    \begin{enumerate}
        \item $\mathcal{A}$ is non-empty;
        
        \item $\mathcal A$ and $\mathcal M$ are closed under isomorphism;
        \item every $A \in \mathcal A$ is fixed by $P$;
        \item every molecule $M \in \mathcal{M}$ decomposes $D(M) = \{A_i \}_{i \in I}$ into atoms $A_i \in \mathcal{A}$.
    \end{enumerate}
\end{definition}

From such a reconstruction setting, we get a category with covering families (with isomorphism covers) for free. Each of the requirements of \cref{defn:CatFam} is straightforward to check.

\begin{proposition}\label{ex:rec setting covering fams 2}
    Let $(\mathcal A, \mathcal M, P)$ be an atomic reconstruction setting on $\cat C$. Consider the following collection of covering families on $\cat C$:
    \begin{enumerate}
        \item $\{\varnothing = \varnothing\}_{i \in I}$ for every finite (possibly empty) set $I$;
        \item the singleton $\{f\colon C\xrightarrow{\cong} C'\}$ for every isomorphism $f\colon C\xrightarrow{\cong} C'$ in $\cat C$;
        \item the collection $\{F_j \circ \psi_j \colon A_j' \rightarrow M\}$, where $P(M) = \{F_j \colon A_j \rightarrow M\}_{j\in J}$ is the patching for a molecule $M \in \mathcal M$ and $\psi_j\colon A_j' \xrightarrow{\cong} A_j$ is an isomorphism for each $j \in J$.
    \end{enumerate}
    This collection gives $\cat C$ the structure of a category with covering families with distinguished object $\varnothing$. The functor $D\colon \cat{C}^{\cong} \rightarrow W(\cat C)$ given by $C \mapsto D(C)$ is a covering reconstruction problem.
\end{proposition}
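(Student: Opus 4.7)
The plan is to verify each clause of \cref{defn:CatFam} for the three declared families, and then to check that $D = \dom \circ P$ is a covering reconstruction problem. The structural conditions on $\varnothing$ are immediate from the standing hypotheses: initiality gives $\cat C(\varnothing,\varnothing) = \{1_\varnothing\}$, and we have assumed there are no morphisms into $\varnothing$ from non-initial objects. The first two covering axioms are built into the declarations, since clause (1) supplies the trivial covers of $\varnothing$ and each identity map is an isomorphism appearing in clause (2).

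The substantive work is the composition axiom, which I would dispatch by case analysis on the type of the outer cover $\{g_j\colon B_j \to A\}_{j\in J}$. A type (1) outer cover forces every refining cover to be of type (1) (or an isomorphism between initial objects, which is itself the identity on $\varnothing$), so all compositions remain of type (1). For a type (3) outer cover, each $B_j$ is isomorphic to an atom; since atoms are fixed by $P$ and are neither initial nor molecules, the only available refinement of $B_j$ is a singleton isomorphism of type (2), and the composition absorbs these into the $\psi_j$'s to give another type (3) cover of $A$. The interesting subcase is a type (2) outer cover $\{g\colon B \xrightarrow{\cong} A\}$ refined by a type (3) cover of $B$; here functoriality of $P$ is essential. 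Since $g$ is an isomorphism in $\cat C^{\cong}$, the morphism $P(g)\colon P(B)\to P(A)$ is an isomorphism in $\vec{\FMS}(\cat C)$. Its underlying commutative square provides a bijection $\sigma$ between the indexing sets of $P(B)$ and $P(A)$, together with isomorphisms $\alpha_j\colon A_j \xrightarrow{\cong} A'_{\sigma(j)}$ satisfying $g\circ F_j = G_{\sigma(j)}\circ \alpha_j$. Substituting gives $g \circ F_j \circ \psi_j = G_{\sigma(j)} \circ (\alpha_j \circ \psi_j)$, which, after reindexing by $\sigma$, exhibits the composed cover of $A$ as a type (3) cover with refined isomorphisms $\alpha_j\circ\psi_j$.

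For the second assertion, the functor $D = \dom\circ P$ is a composition of functors, so functoriality and preservation of isomorphisms are automatic. On objects, condition (4) of an atomic reconstruction setting ensures that $D(M)$ is a multiset of atoms for every molecule $M$, while $D(C) = \{C\}$ when $C$ is fixed (under the standard convention that identifies $\{\varnothing\}$ with the empty basepoint of $W(\cat C)$). On morphisms, the image $D(\phi)$ of an isomorphism $\phi$ is the top arrow of the commutative square $P(\phi)$, which is an isomorphism in $\FMS(\cat C)$ and whose singleton fibres are themselves type (2) covers, so it lies in $W(\cat C)$. Finally, the multimorphism $P(C)\colon D(C) \to \{C\}$ is by construction one of the declared covers of type (2) or (3), which certifies that $D(C)$ covers $C$ in $W(\cat C)$. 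The main obstacle throughout is the bookkeeping in the type-(2)-then-type-(3) composition case, where invoking functoriality of $P$ is exactly what allows the reindexing to match the shape of a type (3) cover.
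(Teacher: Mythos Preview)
Your proposal is correct and follows the same approach as the paper, which simply remarks that ``each of the requirements of \cref{defn:CatFam} is straightforward to check'' without giving details. Your case analysis on the composition axiom---in particular the use of functoriality of $P$ to handle a type~(3) refinement under a type~(2) outer isomorphism---is exactly the routine verification the paper leaves to the reader.
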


If $(\mathcal A, \mathcal M, P)$ is an atomic reconstruction setting on $\cat C$, so $\cat C$ is a category with covering families via \cref{ex:rec setting covering fams 2}, we have
$$K_0(\cat C) \cong \ZZ[\cat C_0] / \sim$$
where $\sim$ is generated by $C \sim C'$ if $D(C) \cong D(C')$. Since we are in an atomic reconstruction setting, each object can only be decomposed by $D$ at most once, implying the following proposition.

\begin{proposition}
    Let $(\mathcal A, \mathcal M, P)$ be an atomic reconstruction setting on $\cat C$. Then $\cat C$ is $D$-reconstructable if and only if $\iota \colon \cat C_0 \rightarrow K_0(\cat C) $ is injective.
\end{proposition}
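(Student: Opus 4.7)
The plan is to compute $K_0(\cat C)$ explicitly using \cref{thm:K0} and then compare the two conditions directly. By \cref{thm:K0}, $K_0(\cat C)$ is $\ZZ[\cat C_0]$ modulo the relations imposed by the covering families of \cref{ex:rec setting covering fams 2}: the empty cover of $\varnothing$ gives $[\varnothing]=0$; the singleton isomorphism covers impose no new relations beyond passing to $\cat C_0$; and the molecular covers give $[M]=\sum_{A\in D(M)}[A]$, a sum over atoms only. My first step is to argue that these relations exhibit $K_0(\cat C)\cong \ZZ[\mathcal A/{\cong}]$ as a free abelian group on isomorphism classes of atoms, with every molecule reduced to its atomic content.

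Under this identification, the map $\iota\colon \cat C_0 \to K_0(\cat C)$ becomes $\iota([\varnothing]) = 0$, $\iota([A]) = [A]$ for an atom $A$, and $\iota([M]) = \sum_{A\in D(M)}[A]$ for a molecule $M$. Uniformly, for every object $C$ we have $\iota([C]) = \sum_{x \in D(C)}[x]$ in $K_0(\cat C)$ (where $[\varnothing]=0$ by convention). The forward implication then follows immediately: if $\iota$ is injective and $D(C)\cong D(C')$ as multisets in $\cat C_0$, then $\iota([C])=\iota([C'])$, hence $C \cong C'$.

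For the converse, assume $\cat C$ is $D$-reconstructable and suppose $\iota([C])=\iota([C'])$. The crucial feature of the atomic setting, which distinguishes it from more general covering setups where pieces may be decomposed repeatedly, is that since atoms are fixed and molecules decompose only into atoms, each object is decomposed by $D$ at most once; so the multiset $D(C)$ can be read off from $\iota([C])$ as a multiset of basis elements. A degree count in the free abelian group $\ZZ[\mathcal A/{\cong}]$ rules out the mixed cases: molecular images have atom-degree $|D(M)|\geq 2$, atomic images have degree $1$, and $\iota([\varnothing])=0$. In the remaining cases, where $C$ and $C'$ are both atoms, both equal to $\varnothing$, or both molecules, equality in the free abelian group forces $D(C)\cong D(C')$ as multisets, and then $D$-reconstructability concludes $C\cong C'$.

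The main obstacle is not conceptual but rather the careful case analysis around the initial object and the size constraint $|I|\geq 2$ for non-fixed patchings; once $K_0(\cat C)$ is identified as the free abelian group on atoms, the equivalence is essentially forced by how $\iota$ unpacks on each class of objects, and no deeper input from the $K$-theory machine is needed.
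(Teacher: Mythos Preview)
Your proof is correct and follows essentially the same approach as the paper, which merely sketches the argument in the sentence preceding the proposition (``each object can only be decomposed by $D$ at most once''). Your explicit identification $K_0(\cat C)\cong \ZZ[\mathcal A/{\cong}]$ is in fact cleaner and more accurate than the paper's description of $K_0$ as $\ZZ[\cat C_0]/\sim$ with $\sim$ generated by $D(C)\cong D(C')$; the latter presentation does not literally coincide with the cover relations, whereas your free-on-atoms computation does, and the degree argument you use to separate $\varnothing$, atoms, and molecules (relying on the $|I|\geq 2$ clause in the definition of a patching) makes the converse direction watertight.
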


While ``molecules'' and ``atoms'' are not required to correspond to some intuitive notion of size, they do in most cases we have considered. In particular, in order to modify the examples at the end of \cref{app subsec:recon and CatFam} where there is no clear divide of the objects into atoms and molecules, we need to introduce a notion of size.

\begin{definition}
    Let $P\colon \cat C^{\cong} \rightarrow \vec{FMS}(\cat C)$ be a patching with corresponding decomposition $D$. A \emph{size map} for $D$ is a functor $\abs{-} \colon \cat C^{\cong}\to \NN$, with $\NN$ viewed as a discrete category, such that:
    \begin{itemize}
        \item for all non-initial objects $C$, we have $|\varnothing| < |C|$; 
        \item $\abs{{\rm im}(\abs{-})} \geq 3$;
        \item for all unfixed objects $C$ with $D(C) = \{C_i\}_{i \in I}$, we have $|C_i| < |C|$ for all $i \in I$.
    \end{itemize}
\end{definition}

%In other words, $P$ assigns every object $C$ of $\cat C$ to some point $P(C) = s$ so that partition is constant on isomorphism classes, and if $C$ is non-trivially decomposed, the pieces are always in a different part of the partition. In \cref{app ex:edge recon} and \cref{app ex:vtx recon}, $P$ is given by number of edges and vertices, respectively; in \cref{app ex:subgp recon}, $P$ is the order of the group.

In other words, $\abs{-}$ assigns every object $C$ of $\cat C$ to some size $|C|=n$, there are at least two different sizes of non-initial object, and $\abs{-}$ is constant on isomorphism classes. If $C$ is not fixed by $\abs{-}$, the pieces in the decomposition $D(C)$ are always of a smaller size than $C$. In \cref{app ex:edge recon} and \cref{app ex:vtx recon}, $P$ is given by number of edges and vertices, respectively; in \cref{app ex:subgp recon}, $P$ is the order of the group.

\begin{remark}
    Requiring size to be valued in $\NN$ is not strictly necessary for what follows. One could generalize to functors into arbitrary discrete categories that satisfy certain conditions, however such generality is unnecessary for our purposes. 
\end{remark}

\begin{definition}
    Let $P$ be a patching with decomposition $D$ and let $\abs{-}\colon \cat C^{\cong} \rightarrow \NN$ be a size map. For $n \in \NN$, set $\mathcal{A}_n := \{C \in \ob \cat C \mid |C| \neq n \text{ or } C \text{ is fixed and non-initial} \}$ and $\mathcal{M}_n := \{C \in \ob \cat C \mid |C| = n \text{ and $C$ is not fixed} \}$. Define $P_n\colon \cat C^{\cong} \rightarrow \vec{\FMS}(\cat C)$ by
$$P_n(C) := \begin{cases} 
    P(C) & \quad \text{ if } C \in \mathcal M_n, \\
    \{C = C \} & \quad \text{ if } C \not \in \mathcal{M}_n.
\end {cases}$$
\end{definition}

\begin{proposition}
The triple $(\mathcal A_n, \mathcal M_n, P_n)$ is an atomic reconstruction setting on $\cat C$ for all $n$.
\end{proposition}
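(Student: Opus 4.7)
The statement requires verifying the four conditions of Definition \ref{rec setting} for the triple $(\mathcal{A}_n, \mathcal{M}_n, P_n)$, together with the background fact that $P_n$ is a genuine patching. The plan is to do this as a direct unwinding of definitions, with all of the real content coming from the three defining properties of the size map $|{-}|$. I expect no deep obstruction; the proof is essentially bookkeeping.

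The key preliminary step is to show that $\mathcal{A}_n$ and $\mathcal{M}_n$ are closed under isomorphism. Because $|{-}|\colon \cat C^{\cong}\to \NN$ is a functor into a discrete category, the value $|C|$ depends only on the isomorphism class of $C$. Being fixed by $P$ is also isomorphism-invariant: since $P$ is a functor valued in $\vec{\FMS}(\cat C)$, an isomorphism $C\cong C'$ yields $P(C)\cong P(C')$ in the arrow category, so one is the identity singleton if and only if the other is. This isomorphism-closure immediately gives condition (2) and also lets us define $P_n$ on morphisms of $\cat C^{\cong}$ by a case split on whether source and target lie in $\mathcal{M}_n$ (cases that cannot disagree, again by closure).

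With this in hand the remaining conditions are quick. For condition (1), the hypothesis $|\operatorname{im}(|{-}|)| \geq 3$ produces at least two values in the image distinct from $n$; since only the initial object realizes the minimal value, at least one of these is realized by a non-initial object, which then lies in $\mathcal{A}_n$. For condition (3), every $A\in\mathcal{A}_n$ satisfies $A\notin\mathcal{M}_n$, and hence by construction $P_n(A) = \{A = A\}$, so $A$ is fixed. For condition (4), a molecule $M\in\mathcal{M}_n$ has $D_{P_n}(M) = D(M) = \{C_i\}_{i\in I}$, and the third clause of the size-map axioms gives $|C_i| < |M| = n$, hence $|C_i|\neq n$ and $C_i\in\mathcal{A}_n$.

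The only subtle point, and therefore the ``hard part,'' is careful bookkeeping around the initial object $\varnothing$. Since $\ob\cat C = \{\varnothing\}\amalg \mathcal{A}_n\amalg\mathcal{M}_n$ is asserted to be a partition, one must read the defining conditions for $\mathcal{A}_n$ and $\mathcal{M}_n$ as tacitly restricted to non-initial objects (consistent with the convention for $\mathcal{A}$ and $\mathcal{M}$ in Definition \ref{rec setting} itself). Under this reading one checks that $P_n$ is a patching: $\varnothing\notin \mathcal{M}_n$ (as $\varnothing$ is fixed by $P$), so $P_n(\varnothing) = \{\varnothing=\varnothing\}$; and if $C$ is not fixed by $P_n$ then $C\in\mathcal{M}_n$, whence $P_n(C) = P(C)$ inherits both the $|I|\geq 2$ clause and the non-self-isomorphism clause from $P$ being a patching.
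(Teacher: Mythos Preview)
Your proposal is correct and follows essentially the same approach as the paper's proof, which is a brief verification of the partition, the patching axioms, isomorphism-closure, and conditions (1)--(4) directly from the definitions. You simply supply more detail than the paper does (the paper dispatches most of these as ``easy to verify''), including a more careful treatment of the non-emptiness of $\mathcal{A}_n$ via the $|\mathrm{im}(|{-}|)|\geq 3$ hypothesis and the bookkeeping around $\varnothing$.
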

\begin{proof}
    %By hypothesis, $\mathcal{S}_n$ is non-empty. Since $s: \cat C \rightarrow \NN$ is a functor, we know that $\mathcal{S}_n$ and $\mathcal{B}_n$ are closed under isomorphism. The hypotheses on $s$ ensure that $(D_n)_0$ decomposes big objects into small objects.
    It is easy to verify that $\ob \cat C = \{\varnothing\} \amalg \mathcal{A}_n \amalg \mathcal{M}_n$, that $P_n$ is a patching, that $\mathcal{A}_n$ and $\mathcal{M}_n$ are closed under isomorphism, and that each $A \in \mathcal{A}_n$ is fixed by $P_n$. It is also easy to check that $\mathcal{A}_n$ is non-empty from the definition of a size map. Finally let $D_n := \dom \circ P_n$ be the decomposition corresponding to $P_n$. For $M \in \mathcal{M}_n$ with decomposition $D_n(M) = \{X_i\}_{i \in I}$, since $|X_i| < |M|$ for all $i \in I$, it follows that each $X_i \in \mathcal{A}_n$. Thus we have an atomic reconstruction setting. 
\end{proof}

The idea is that we can study the decomposition $D$ as a reconstruction problem on $\cat C^{\cong}$ by instead studying the collection of atomic reconstruction settings $\{(\mathcal{A}_n, \mathcal{M}_n, D_n) \}_{n \in \NN}$. However, we need to ensure that no information is gained by applying the size map, which essentially amounts to assuming that size is a recognizable property.

\begin{theorem}\label{size reconstruction}
    If a size map $\abs{-}$ is $D$-recognizable, then $\cat C^{\cong}$ is $D$-reconstructable if and only if $\cat C^{\cong}$ is $D_n$-reconstructable for all $n \in \NN$. 
\end{theorem}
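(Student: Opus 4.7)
The plan is to prove both implications by a case analysis on whether the objects involved are fixed by the patching $P$, using the $D$-recognizability of $\abs{-}$ only in the reverse direction to pin two objects to a common ``size level.'' The key observations are purely definitional: by construction of $D_n$, the decomposition $D_n(C)$ equals $D(C)$ when $C\in \mathcal M_n$ and equals the singleton $\{C\}$ otherwise; and by the patching axioms, if $C$ is not fixed by $P$ then $D(C)=\{C_i\}_{i\in I}$ with $|I|\geq 2$, so a singleton multiset and an unfixed-object multiset can never be isomorphic in $\FMS(\cat C)$.

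For the forward direction, assume $\cat C^{\cong}$ is $D$-reconstructable, fix $n$, and suppose $D_n(C)\cong D_n(C')$ in $\FMS(\cat C)$. First, if both $C,C'\in\mathcal M_n$, then $D_n(C)=D(C)$ and $D_n(C')=D(C')$, so $D$-reconstructability immediately gives $C\cong C'$. Second, if neither $C$ nor $C'$ is in $\mathcal M_n$, then $D_n(C)=\{C\}$ and $D_n(C')=\{C'\}$, and a multiset isomorphism of singletons is just an isomorphism $C\cong C'$ in $\cat C$. Third, if exactly one lies in $\mathcal M_n$, the cardinalities of the underlying index sets differ (one is $\geq 2$, the other is $1$), so no multiset isomorphism exists and the case is vacuous.

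For the reverse direction, assume $D_n$-reconstructability for every $n$ and suppose $D(C)\cong D(C')$. Because $\abs{-}$ is $D$-recognizable, there exists $Q_0\colon \FMS(\cat C)_0\to \NN$ with $\abs{-}_0=Q_0\circ D_0$, so
\[
\abs{C}=Q_0([D(C)])=Q_0([D(C')])=\abs{C'};
\]
call this common value $n$. Again there are three subcases: if both $C$ and $C'$ are fixed by $P$, then $D(C)=\{C\}$ and $D(C')=\{C'\}$, giving $C\cong C'$ directly; if both are unfixed, then they have size $n$ and are therefore in $\mathcal M_n$, so $D_n(C)=D(C)\cong D(C')=D_n(C')$ and $D_n$-reconstructability produces $C\cong C'$; and if exactly one is fixed, the same cardinality mismatch as before rules the case out.

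The argument is almost entirely bookkeeping, and the only genuine delicacy is verifying that the three cases are exhaustive and that the definitions of ``fixed,'' ``in $\mathcal M_n$,'' and the partition $\{\varnothing\}\amalg\mathcal A_n\amalg\mathcal M_n$ line up correctly. The initial object $\varnothing$ is always fixed and satisfies $D(\varnothing)=D_n(\varnothing)=\{\varnothing\}$, so it fits into the ``both non-molecules'' branch of the forward direction and the ``both fixed'' branch of the reverse direction without issue. The hypothesis that $\abs{-}$ is $D$-recognizable is what guarantees we never have to compare $D_n(C)$ with $D_m(C')$ for $n\neq m$, which is the only place the reverse direction could conceivably fail.
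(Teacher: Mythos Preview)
Your proof is correct and follows essentially the same approach as the paper's proof: a case split on membership in $\mathcal{M}_n$ (forward direction) and on fixedness by $P$ (reverse direction), with $D$-recognizability of $\abs{-}$ invoked only in the reverse direction to force $\abs{C}=\abs{C'}$. You are slightly more explicit than the paper in handling the ``exactly one'' mixed cases via the cardinality mismatch $\lvert I\rvert\geq 2$ versus $1$, whereas the paper simply asserts the dichotomy without justification; otherwise the arguments are identical.
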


\begin{proof}
    Suppose $\cat C^{\cong}$ is $D$-reconstructable, and suppose $X \cong_{D_n} Y$ for some $n\in \NN$. Then we have either $X,Y \in \mathcal{M}_n$ or $X,Y \not \in \mathcal{M}_n$. In the first case, we get that $X \cong_{D} Y$, and consequently $X \cong Y$ since $\cat C^{\cong}$ is $D$-reconstructable. In the second case, $X$ and $Y$ are fixed by $P_n$, so $X \cong Y$. 

    Conversely, suppose that $\cat C^{\cong}$ is $D_n$-reconstructable for all $n \in \NN$, and suppose $X \cong_{D} Y$. Then $X$ and $Y$ are either both fixed or both not fixed by $P$. If both are fixed, then $X \cong Y$, so suppose neither are fixed. Then since $\abs{-}$ is $D$-recognizable, $|X| = |Y|=n$ for some $n$ and so $X,Y \in \mathcal{M}_n$ and $X\cong_{D_n} Y$. Since $\cat C^{\cong}$ is $D_n$-reconstructable, this implies $X\cong Y$.
\end{proof}

\begin{remark}
    Note that this theorem would not hold if  $P$ were a patching, which is why we require $P$ to be a patching in the definition of an atomic reconstruction setting. 
\end{remark}

\begin{example}
Continuing from \cref{app ex:edge recon}, let $\abs{-}\colon \Gamma \rightarrow \NN$ send a graph to its number of edges. Then $|G|$ is recognizable from $\ED(G)$, so we may break up the reconstruction problem by number of edges. For each $n \in \NN$, let $\Gamma_n$ be $\Gamma$ with the following covering families:\begin{itemize}
    \item $\{\varnothing = \varnothing\}_{i \in I}$ for every finite (possibly empty) set $I$;
    \item the singleton $\{f: G \xrightarrow{\cong} G'\}$ for every graph isomorphism $f: G \xrightarrow{\cong} G'$ in $\Gamma$;
    \item for each graph $G$ with exactly $n$ edges and at least $4$ edges, the collection $\{\iota_C \circ \psi_C : C' \rightarrow G\}_{C \in \ED(G)}$, where $\iota_C: C \hookrightarrow G$ is the inclusion of a card and $\psi_C : C' \xrightarrow{\cong} C$ is a graph isomorphism.
\end{itemize}
The functor $\ED'_n \colon \Gamma \rightarrow W(\Gamma_n)$ encodes breaking $n$-edge graphs into their $n-1$-edge subgraphs, and $\Gamma$ is $\ED'$-reconstructable if and only if $\Gamma$ is $\ED'_n$-reconstructable for all $n$ (note this is trivial for $n \leq 3$). Further, $\Gamma$ is $\ED'_n$-reconstructable if and only if the map $\iota \colon \Gamma_0 \rightarrow K_0(\Gamma_n)$ is an injection, which has been confirmed for small $n$.
\end{example}

\begin{example}[Vertex reconstruction continued]
    Just as number-of-edges gives a recognizable size map for edge reconstruction, number-of-vertices gives a recognizable size map for vertex reconstruction and we may make analogous definitions as above.
\end{example}

\begin{example}
    Continuing from \cref{app ex:subgp recon}, let $\abs{-}: \cat G \rightarrow \mathbb{N}$ send a finite group to its order. While $\abs{-}$ is not ${\rm Sub}$-recognizable, it is ${\rm Sub}'$-recognizable by a straightforward argument using the first Sylow theorem. 
    In some cases, $\cat G_n$ is ${\rm Sub}'$-reconstructable and in other cases it is not; for example, $\cat G_4$ is ${\rm Sub}'$-reconstructable (the Klein $4$ group and $\ZZ/4\ZZ$ have different subgroups) but $\cat G_{16}$ is not. One can check that $\ZZ/2\ZZ \times \ZZ/8\ZZ$ has the same subgroup lattice as the modular group on 16 elements \cite{groupsMO}--- hence these groups are not reconstructable from their lattice of subgroups, much less the multiset of isomorphism classes of subgroups. By \cref{size reconstruction}, $\cat G$ is not ${\rm Sub}'$-reconstructable. 
\end{example}
%-------------------------%-------------------------%-------------------------%-------------------------%-------------------------%-------------------------%-------------------------%-------------------------
\bibliographystyle{alpha}
\bibliography{references}

\newcommand{\etalchar}[1]{$^{#1}$}
\begin{thebibliography}{HMM{\etalchar{+}}22}

\bibitem[Ber75]{berge}
C.~Berge.
\newblock Isomorphism problems for hypergraphs.
\newblock In {\em Combinatorics}, volume~16 of {\em NATO Advanced Study
  Institutes Series}. Springer, Dordrecht, 1975.

\bibitem[BGM{\etalchar{+}}23]{bohmann/gerhardt/malkiewich/merling/zakharevich:23}
Anna~Marie Bohmann, Teena Gerhardt, Cary Malkiewich, Mona Merling, and Inna
  Zakharevich.
\newblock A trace map on higher scissors congruence groups, 2023.
\newblock Preprint.

\bibitem[BH77]{bondy/hemminger}
J.~A. Bondy and R.~L. Hemminger.
\newblock Graph reconstruction --- a survey.
\newblock {\em J. Graph Theory}, 1:227--268, 1977.

\bibitem[Bon91]{bondy91}
J.~A. Bondy.
\newblock A graph reconstructor's manual.
\newblock In A.~D.Editor Keedwell, editor, {\em Surveys in Combinatorics,
  1991}, London Mathematical Society Lecture Note Series, page 221–252.
  Cambridge University Press, 1991.

\bibitem[Bry73]{brylawski}
T.~H. Brylawski.
\newblock Reconstructing combinatorial geometries.
\newblock In {\em Graphs and Combinatorics}, number 406 in Lecture Notes in
  Mathematics. Springer-Verlag, Berlin, 1973.

\bibitem[Bry75]{brylawski2}
T.~H. Brylawski.
\newblock On the nonreconstructibility of combinatorial geometries.
\newblock {\em J. Combinatorial Theory Ser. B}, 19(1):72--76, 1975.

\bibitem[Cam19]{Cam17}
Jonathan~A. Campbell.
\newblock The {{K-Theory spectrum}} of {{varieties}}.
\newblock {\em Trans. Amer. Math. Soc.}, 371:7845--7884, 2019.

\bibitem[CDK{\etalchar{+}}22]{CDKOSW}
Daniel Carranza, Brandon Doherty, Chris Kapulkin, Morgan Opie, Maru Sarazola,
  and Liang~Ze Wong.
\newblock Cofibration category of digraphs for path homology, 2022.
\newblock Preprint.

\bibitem[CKMZ23]{CKMZ:squares}
Jonathan Campbell, Josefien Kuijper, Mona Merling, and Inna Zakharevich.
\newblock Algebraic $k$-theory for squares categories, 2023.
\newblock Preprint.

\bibitem[CZ22a]{CamZak}
Jonathan Campbell and Inna Zakharevich.
\newblock Hilbert's third problem and a conjecture of {G}oncharov, 2022.
\newblock Preprint.

\bibitem[CZ22b]{CZ18}
Jonathan~A. Campbell and Inna Zakharevich.
\newblock D\'evissage and {{Localization}} for the {{Grothendieck spectrum}} of
  {{varieties}}.
\newblock {\em Advances in Mathematics}, 411, 2022.

\bibitem[dh]{groupsMO}
dvitek (https://mathoverflow.net/users/8345/dvitek).
\newblock Does the hypergraph structure of the set of subgroups of a finite
  group characterize isomorphism type?
\newblock MathOverflow (version: 2010-08-13).

\bibitem[Dup01]{Dup01}
Johan~L. Dupont.
\newblock {\em Scissors Congruences, Group Homology, and Characteristic
  Classes}, volume~1 of {\em Nankai Tracts in Mathematics}.
\newblock {World Scientific}, {Singapore ; River Edge, NJ}, 2001.

\bibitem[EPY88]{EPY88}
M.~N. Ellingham, L.~Pyber, and X.~Yu.
\newblock Claw-free graphs are edge reconstructible.
\newblock {\em Journal of Graph Theory}, 12(3):445--451, 1988.

\bibitem[Fil82]{Fil82}
R.P. Filipkiewicz.
\newblock Isomorphisms between diffeomorphism groups.
\newblock {\em Ergodic Theory and Dynamical Systems}, 2(2):159--171, 1982.

\bibitem[For04]{forman}
Robin Forman.
\newblock Finite-type invariants for graphs and graph reconstructions.
\newblock {\em Advances in Mathematics}, 186:181--228, 2004.

\bibitem[Gil74]{giles74}
William~B Giles.
\newblock The reconstruction of outerplanar graphs.
\newblock {\em Journal of Combinatorial Theory, Series B}, 16(3):215--226,
  1974.

\bibitem[Har64]{harary:1964}
F.~Harary.
\newblock On the reconstruction of a graph from a collection of subgraphs.
\newblock In {\em Proc. Sympos. Smolenice, 1963}, Theory of Graphs and its
  Applications, pages 47--52, Prague, 1964. Publ. House Czechoslovak Acad. Sci.

\bibitem[HMM{\etalchar{+}}22]{hoekzema/merling/murray/rovi/semikina:2021}
Renee~S. Hoekzema, Mona Merling, Laura Murray, Carmen Rovi, and Julia Semikina.
\newblock Cut and paste invariants of manifolds via algebraic {$K$}-theory.
\newblock {\em Topology and its Applications}, 316, 2022.

\bibitem[Jes68]{Jes68}
B{\o}rge Jessen.
\newblock The algebra of polyhedra and the {Dehn-Sydler} theorem.
\newblock {\em Mathematica Scandinavica}, 22(2):241--256, 1968.

\bibitem[Kel42]{kelly42}
Paul~Joseph Kelly.
\newblock {\em On Isometric Transformations}.
\newblock PhD thesis, The University of Wisconsin-Madison, 1942.

\bibitem[Kot09]{kotek}
T.~Kotek.
\newblock On the reconstruction of graph invariants.
\newblock {\em Electronic Notes in Discrete Mathematics}, 34:375--379, 2009.

\bibitem[KR87]{krasikov/roditty}
I.~Krasikov and Y.~Roditty.
\newblock Balance equations for reconstruction problems.
\newblock {\em Arch. Math}, 48:458--464, 1987.

\bibitem[Kro16]{kroes16}
Dani\"{e}l Kroes.
\newblock The edge reconstruction conjecture for graphs.
\newblock Master's thesis, Universiteit Utrecht, 2016.

\bibitem[Mal23]{malkiewich}
Cary Malkiewich.
\newblock Scissors congruence {$K$}-theory is a {T}hom spectrum, 2023.
\newblock Preprint.

\bibitem[Pop90]{Pop90}
F.~Pop.
\newblock On the {Galois Theory} of function fields of one variable over number
  fields.
\newblock {\em Journal f{\"u}r die reine und angewandte Mathematik (Crelles
  Journal)}, 1990:200 -- 218, 1990.

\bibitem[Qui73]{quillen:73}
Daniel Quillen.
\newblock Higher algebraic {$K$}-theory: I.
\newblock In H.~Bass, editor, {\em Higher K-Theories}, pages 85--147, Berlin,
  Heidelberg, 1973. Springer Berlin Heidelberg.

\bibitem[Sta84]{stanley:84}
R.P. Stanley.
\newblock Quotients of {P}eck posets.
\newblock {\em Order}, 1:29--34, 1984.

\bibitem[Sta85]{stanley:85}
R.P. Stanley.
\newblock Reconstruction from vertex-switching.
\newblock {\em Journal of Combinatorics, Series B}, 38:132--138, 1985.

\bibitem[Sto77]{stockmeyer}
Paul~K. Stockmeyer.
\newblock The falsity of the reconstruction conjecture for tournaments.
\newblock {\em J. Graph Theory}, 1(1):19--25, 1977.

\bibitem[Syd65]{Syd65}
J.~P. Sydler.
\newblock Conditions n\'ecessaires et suffisantes pour l'\'equivalence des
  poly\`edres de l'espace {E}uclidien \`a trois dimensions.
\newblock {\em Commentarii Mathematici Helvetici}, 40(1):43--80, December 1965.

\bibitem[Wal85]{waldhausen:85}
Friedhelm Waldhausen.
\newblock Algebraic {{$K$-theory}} of spaces.
\newblock In {\em Algebraic and {{Geometric Topology}}}, Lecture {{Notes}} in
  {{Mathematics}}, pages 318--419, {Berlin, Heidelberg}, 1985. {Springer}.

\bibitem[Wal08]{wall08}
Nicole~Turpin Wall.
\newblock On the reconstruction conjecture.
\newblock Master's thesis, The University of North Carolina at Greensboro,
  2008.

\bibitem[Wei13]{weibel:13}
Charles~A. Weibel.
\newblock {\em The {K}-book: An Introduction to Algebraic {$K$}-theory}.
\newblock American Mathematical Society, 2013.

\bibitem[Whi15]{Whi15}
E.T. Whittaker.
\newblock On the functions which are represented by the expansions of the
  interpolation-theory.
\newblock {\em Proceedings of the Royal Society of Edinburgh}, pages 181--194,
  1915.

\bibitem[Whi32]{whit32}
Hassler Whitney.
\newblock Congruent graphs and the connectivity of graphs.
\newblock {\em American Journal of Mathematics}, 54(1):150--168, 1932.

\bibitem[Zak12]{zakharevich:12}
Inna Zakharevich.
\newblock Scissors congruence as {{$K$-theory}}.
\newblock {\em Homotopy, Homology and Applications}, 14(1), 2012.

\bibitem[Zak16a]{zakharevich:16a}
Inna Zakharevich.
\newblock The {$K$}-theory of assemblers, 2016.
\newblock Preprint.

\bibitem[Zak16b]{zakharevich:16b}
Inna Zakharevich.
\newblock On {$K_1$} of an assembler, 2016.
\newblock Preprint.

\end{thebibliography}

\end{document}